\numberwithin{equation}{section}
\newcommand{\vanish}[1]{}
\def\bbar#1{\setbox0=\hbox{$#1$}\dimen0=.2\ht0 \kern\dimen0 }
  \newcommand{\FF}{{\mathbb F}}
 \newcommand{\QQ}{{\mathbb Q}}
\newcommand{\ZZ}{{\mathbb Z}}
\def\bbar#1{\setbox0=\hbox{$#1$}\dimen0=.2\ht0 \kern\dimen0 \overline{\kern-\dimen0 #1}}
\DeclareMathOperator{\rad}{rad}
\DeclareMathOperator{\Frob}{Frob}
\DeclareMathOperator{\End}{End}
\DeclareMathOperator{\Hom}{Hom}
\DeclareMathOperator{\Aut}{Aut}
\DeclareMathOperator{\Gal}{Gal}
\newcommand{\GL}{\operatorname{GL}}
\newcommand{\injects}{\hookrightarrow}
\newcommand{\isom}{\simeq}
\newcommand{\isomto}{\overset{\sim}{\rightarrow}}
\def\p{\mathfrak{p}}
\def\q{\mathfrak{q}}
\def\vV{\mathcal{V}}
\def\tT{\mathcal{T}}
\def\conn{{\operatorname{conn}}}
\def\GSp{{\operatorname{GSp}}}
\newtheorem{theorem}{Theorem}[section]
\newtheorem{lemma}[theorem]{Lemma}
\newtheorem{proposition}[theorem]{Proposition}
\theoremstyle{definition}
\newtheorem{definition}[theorem]{Definition}
\theoremstyle{remark}
\newtheorem{remark}[theorem]{Remark}
\newtheorem{remarks}[theorem]{Remarks}
\definecolor{webcolor}{rgb}{0,0,1}
\definecolor{webbrown}{rgb}{.6,0,0}
\date{\today}
\begin{document}

\title[]{A Radical Characterization of Abelian Varieties}

%\subjclass[2014]{Primary 12F12; Secondary 14J27, 11G05, 11G40}
%\keywords{Inverse Galois theory;   Elliptic surfaces, Elliptic curves over global fields, [L-functions of varieties over global fields; Birch-Swinnerton-Dyer conjecture]}

\author{Theodore Hui}
\address{Department of Mathematics, Cornell University, Ithaca, NY 14853, USA}
\email{hh535@math.cornell.edu}

\begin{abstract}
Let $A$ be a square-free abelian variety defined over a number field $K$. Let $S$ be a density one set of prime ideals $\p$ of $\mathcal{O}_K$.
A famous theorem of Faltings says that the Frobenius polynomials $P_{A,\p}(x)$ for $\p\in S$ determine $A$ up to isogeny. 
We show that the prime factors of $|A(\FF_\p)|=P_{A,\p}(1)$ for $\p\in S$ also determine $A$ up to isogeny over an explicit finite extension of $K$. 
The proof relies on understanding the $\ell$-adic monodromy groups which come from the $\ell$-adic Galois representations of $A$, and the absolute Weyl group action on their weights.
\end{abstract}

\maketitle

\section{Introduction}\label{intro}

Let $A$ be a non-zero abelian variety defined over a number field $K$. Let $\Sigma_K$ be the set of non-zero prime ideals of the ring of integers $\mathcal{O}_K$ of $K$. For each prime $\p\in \Sigma_K$, let $\FF_\p:=\mathcal{O}_K/\p$ be the corresponding residue field. For all but finitely many primes $\p\in \Sigma_K$, $A$ has good reduction modulo $\p$ and such a reduction gives an abelian variety $A_\p$ defined over $\FF_\p$. Let $P_{A,\p}(x)$ be the Frobenius polynomial of $A$ at $\p$ which we will define in \S\ref{galrep}. If $A$ is isogenous to another abelian variety $A'$ also defined over $K$, then one can show that $P_{A,\p}(x)=P_{A',\p}(x)$ for all $\p\in\Sigma_K$ for which $A$ and $A'$ have good reduction.

Let $S$ be a density one subset of $\Sigma_K$ for which $A$ has good reduction.
A theorem of Faltings says that the function $\p\in S\mapsto P_{A,\p}(x)$ determines $A$ up to isogeny \cite[\S5, Corollary 2]{MR861971}, i.e., if $A$ and $A'$ are abelian varieties defined over a number field $K$ such that $P_{A,\p}(x)=P_{A',\p}(x)$ for a density $1$ set of prime ideals $\p\in \Sigma_K$, then $A$ is isogenous to $A'$ (over $K$).
In fact, one can further show that the function $\p\in S\mapsto |A(\FF_\p)|$ determines $A$ up to isogeny; note that this is a weaker condition than in Faltings' theorem since $|A(\FF_\p)|=P_{A,\p}(1)$. This result seems to be unknown and we will give a quick proof in \S\ref{DZ}. 

Let $\Lambda$ be a set of rational primes.
For any integer $n\geq 1$, we define the \textbf{radical of $n$ with respect to $\Lambda$} by
\[
\rad_\Lambda(n):=\prod_{\ell\in\Lambda,\ \ell | n} \ell.
\] 
Note that when $\Lambda$ is the set of all rational primes, $\rad(n):=\rad_\Lambda(n)$ is the usual definition of radical of $n$, i.e., the product of the distinct prime divisors of $n$.\\ 

Now, let $\Lambda$ be a density one subset of rational primes. 
The main goal of this paper is to study if and when 
the function $\p\in S\mapsto \rad_\Lambda|A(\FF_\p)|$ determines $A$ up to isogeny; note that this is an even weaker condition. This problem has already been studied for special classes of $A$, see \S\ref{previous}.\\

The abelian variety $A$ is isogenous to $\prod_i B_i^{e_i}$ with $B_i$ pairwise non-isogenous simple abelian varieties defined over $K$ and $e_i\geq 1$. It is easy to see that 
\[
\rad_\Lambda|A(\FF_\p)|=\rad_\Lambda\left|(\prod_{i}\nolimits B_i)(\FF_\p)\right|
\] 
for all $\p\in \Sigma_K$ for which $A$ has good reduction; it does not depend on the $e_i\geq 1$. So in general, we will not be able to recover the isogeny class of $A$ by studying $\rad_\Lambda|A(\FF_\p)|$ for $\p\in \Sigma_K$.
This motivates the following definition: we say that $A$ is \textbf{square-free} if it is non-zero and $e_i=1$ for all $i$.

Let $\overline{K}$ be a fixed algebraic closure of $K$.
Let $K_A^\conn$ be the minimal extension of $K$ in $\overline{K}$ for which the $\ell$-adic monodromy groups of $A$ are connected, see \S\ref{galrep}. We can also characterize $K_A^\conn$ as the minimal extension of $K$ in $\overline{K}$ for which $K_A^\conn$ is contained in the torsion field $K(A[\ell])$ for all sufficiently large primes $\ell$, see \cite[Theorem 0.1]{MR1441234}.

Our main theorem says that if $A$ is square-free and if we replace $K$ with $K_A^\conn$, then the function $\p\in S\mapsto \rad_\Lambda|A(\FF_\p)|$ determines $A$ up to isogeny. We will give a proof in \S\ref{proofofthm}.

\begin{theorem}\label{mainthm}
Let $A$ be a square-free abelian variety defined over a number field $K$ satisfying $K_A^\conn=K$. Let $\Lambda$ be a density $1$ set of rational primes.
Suppose $A'$ is a square-free abelian variety defined over $K$ for which 
\[
\rad_\Lambda |A(\FF_\p)|=\rad_\Lambda |A'(\FF_\p)|
\]
holds for all $\p\in \Sigma_K$ away from a set of density $0$. Then $A$ is isogenous to $A'$ (over $K$).
\end{theorem}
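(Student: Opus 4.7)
The plan is to reduce the theorem to the isogeny criterion mentioned in the introduction and proved in \S\ref{DZ}: that the function $\p \mapsto |A(\FF_\p)|$ already determines $A$ up to isogeny. It thus suffices to show $|A(\FF_\p)| = |A'(\FF_\p)|$ for a density-one set of $\p$. Unpacking the radical hypothesis prime by prime, for each $\ell \in \Lambda$ a density-one set of $\p$ satisfies $\ell \mid |A(\FF_\p)| \iff \ell \mid |A'(\FF_\p)|$. Writing $|A(\FF_\p)| = \det(1 - \rho_\ell(\Frob_\p) \mid V_\ell A)$, the divisibility $\ell \mid |A(\FF_\p)|$ is exactly the (conjugacy-invariant, mod-$\ell$) condition that $\bar\rho_\ell(\Frob_\p)$ has $1$ as an eigenvalue on $A[\ell]$.

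Fix $\ell \in \Lambda$ and consider the combined mod-$\ell$ representation landing in the finite group $\bar H_\ell \subseteq \GL(A[\ell]) \times \GL(A'[\ell])$. By Chebotarev, Frobenius elements equidistribute in the conjugacy classes of $\bar H_\ell$, so the density-one biconditional upgrades to a pointwise biconditional: every $(g_1, g_2) \in \bar H_\ell$ satisfies ``$g_1$ has the eigenvalue $1$'' iff ``$g_2$ has the eigenvalue $1$''. For $\ell$ large, $\bar H_\ell$ is essentially the group of $\F_\ell$-points of an integral model of the Zariski-monodromy group $H_\ell \subseteq G_\ell \times G'_\ell$. The hypothesis $K_A^\conn = K$ makes $G_\ell$ connected reductive; one expects a parallel connectedness for $G'_\ell$ after running the same argument over $K \cdot K_{A'}^\conn$ and descending. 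Restricting the pointwise biconditional to the $\F_\ell$-points of a maximal torus $T \subset H_\ell$, the ``$1$-eigenvalue'' locus on $T$ is the union of the subtori $\ker(\chi) \subset T$ as $\chi$ ranges over the multiset of weights of $V_\ell A$ (resp.\ $V_\ell A'$); these two arrangements of subtori must therefore coincide on $T$.

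The technical core is to promote this equality of hyperplane arrangements to an equality of \emph{multisets} of weights---which would yield $V_\ell A \cong V_\ell A'$ as $H_\ell$-representations, hence $P_{A,\p}(x) = P_{A',\p}(x)$ for a density-one set of $\p$, completing the reduction to \S\ref{DZ}. Here the \emph{absolute} Weyl group enters crucially: the weights of $V_\ell$ are permuted by the ordinary Weyl group $W(G_\ell, T)$ together with the natural Galois action on $X^*(T)$, and irreducible rational factors of $P_{A,\p}(x)$ are indexed by the resulting orbits. The square-freeness of $A$ and $A'$ forces the isotypic decompositions $V_\ell A = \bigoplus V_\ell B_i$ to have each simple component appearing with multiplicity exactly one, which is the combinatorial rigidity required to pin down the multiplicities of weight orbits from their kernel arrangement alone.

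I expect the main obstacle to be precisely this last upgrade, from ``kernel arrangement of weights'' to ``multiset of weights''. Even within a single representation, distinct weights $\chi \neq \chi'$ can share a common kernel subtorus, so set-theoretic coincidence of kernels is strictly weaker than multiset equality of weights. Controlling this seems to require specific structural knowledge of which connected reductive subgroups of $\GL_{2g}$ can arise as $\ell$-adic monodromy groups of abelian varieties (building on work of Chi, Pink, Hall, and others), combined with a careful analysis of the Galois action on the weight lattice; that is presumably where the bulk of the technical work in \S\ref{proofofthm} lies.
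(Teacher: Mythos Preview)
Your overall shape is right up through the pointwise biconditional on ``eigenvalue $1$'' inside $H_\ell(\FF_\ell)$, but the target you aim for afterwards is too strong, and the paper deliberately avoids the obstacle you identify rather than resolving it.

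The paper never proves $P_{A,\p}(x)=P_{A',\p}(x)$, nor $V_\ell A\cong V_\ell A'$, nor equality of multisets of weights. For a simple abelian variety the weights of $V_\ell$ can have multiplicity $e=[D:E]^{1/2}>1$ (see \S\ref{splitting}), so square-freeness does not force multiplicity one, and your proposed ``kernel arrangement $\Rightarrow$ multiset'' upgrade genuinely fails. What the paper proves instead is only $\rad P_{A,\p}(x)=\rad P_{A',\p}(x)$, i.e.\ equality of \emph{sets} of roots. The route is: the eigenvalue-$1$ biconditional, plus the inclusion $\mathbb{G}_m\subseteq H_\ell$ of homotheties, promotes ``$1$ is an eigenvalue of $B$ iff of $B'$'' to ``every eigenvalue of $B$ in $\overline{\FF}_\ell$ is an eigenvalue of $B'$'' (and conversely); this is where the biconditional becomes a statement about all roots, not just $1$. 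One then needs this mod $\ell$ for \emph{infinitely many} $\ell\in\Lambda$, with uniform complexity bounds on the relevant subvarieties of $\tT_\ell$ and a Lang--Weil point count against the bounded index $[H_\ell(\FF_\ell):\overline{\rho}_\ell(\Gal_K)]$, to lift the root-containment to characteristic zero (Lemma~\ref{dividepoly}). Your single-$\ell$ Chebotarev step does not see this uniformity issue.

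Once $\rad P_{A,\p}=\rad P_{A',\p}$ is known, the matching of simple factors is handled not by any kernel-arrangement combinatorics but by Pink's theorem that $(G_{A,\ell},V_\ell A)$ is a weak Mumford--Tate pair, so every irreducible constituent is minuscule; this forces the weight sets of non-isogenous simple factors $B_i$, $B'_j$ to be \emph{disjoint} (Lemma~\ref{disjointweight}), hence $P_{B_i,\p}$ and $P_{B'_j,\p}$ are coprime for almost all $\p$ (Theorem~\ref{fourptone}). That coprimality, together with $\rad P_{A,\p}=\rad P_{A',\p}$, forces the simple factors to pair off bijectively. Finally, your aside that connectedness for $G'_\ell$ should follow by ``running the same argument and descending'' hides a real difficulty: even $K_A^\conn=K_{A'}^\conn=K$ does not give $K_{A\times A'}^\conn=K$, and the paper devotes a separate argument (Proposition~\ref{finaletwo}) to this, using the radical hypothesis itself to rule out nontrivial component groups.
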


One can slightly weaken the assumption and study what happens when $\rad_\Lambda|A'(\FF_\p)|$ divides $\rad_\Lambda|A(\FF_\p)|$ for all $\p\in S$. Although it seems stronger, we will deduce the following theorem from Theorem \ref{mainthm} in \S\ref{pfofmaincor}. 

\begin{theorem}\label{maincor}
Let $A$ be a square-free abelian variety defined over a number field $K$ satisfying $K_A^\conn=K$; it is isogenous to $\prod_{i=1}^r B_i$, where $B_i$ are pairwise non-isogenous simple abelian varieties defined over $K$. 
Let $\Lambda$ be a density $1$ set of rational primes. 
Suppose that $A'$ is any abelian variety defined over $K$ for which
\[
\rad_\Lambda|A'(\FF_\p)| \text{ divides } \rad_\Lambda|A(\FF_\p)|
\]
for all $\p\in \Sigma_K$ away from a set of density $0$. Then $A'$ is isogenous to $\prod_{i\in I} B_i^{e_i}$ for some subset $I\subseteq \{1,\ldots,r\}$ and integers $e_i\geq 1$. 
\end{theorem}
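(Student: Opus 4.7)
The plan is to reduce Theorem \ref{maincor} directly to Theorem \ref{mainthm} by replacing $A'$ with a carefully chosen square-free abelian variety whose $\rad_\Lambda$-values on residue fields match those of $A$ exactly, rather than merely being divisors of them. First I would decompose $A'$ up to isogeny as $\prod_j C_j^{f_j}$ with the $C_j$ pairwise non-isogenous simple abelian varieties over $K$, and set $A'_{\mathrm{sf}} := \prod_j C_j$. Since $\rad_\Lambda$ ignores prime multiplicities, $\rad_\Lambda |A'(\FF_\p)| = \rad_\Lambda |A'_{\mathrm{sf}}(\FF_\p)|$ for every $\p$ at which $A'$ has good reduction.

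Next I would sort the simple factors according to their relation to the $B_i$: let $I \subseteq \{1,\dots,r\}$ be the set of $i$ for which some $C_j$ is isogenous to $B_i$, and let $T$ index those $C_j$ that are not isogenous to any $B_i$. Define
\[
W := \prod_{i=1}^r B_i \;\times\; \prod_{j \in T} C_j,
\]
which is a square-free abelian variety over $K$ by construction. The key observation is that
\[
\rad_\Lambda |W(\FF_\p)| = \rad_\Lambda |A(\FF_\p)|
\]
for all $\p \in \Sigma_K$ away from a density zero set: indeed $\rad_\Lambda(mn) = \lcm\bigl(\rad_\Lambda(m), \rad_\Lambda(n)\bigr)$, and $\rad_\Lambda \prod_{j \in T} |C_j(\FF_\p)|$ divides $\rad_\Lambda |A'_{\mathrm{sf}}(\FF_\p)| = \rad_\Lambda |A'(\FF_\p)|$, which by hypothesis divides $\rad_\Lambda |A(\FF_\p)|$.

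With this equality in hand, Theorem \ref{mainthm} applied to the pair $(A, W)$ --- using $K_A^\conn = K$, the fact that both are square-free over $K$, and the radical equality on a density one set --- yields $A \sim W$ over $K$. By Poincar\'e's complete reducibility and the uniqueness of the decomposition of an abelian variety into simple isogeny factors, the relation $\prod_i B_i \sim \prod_i B_i \times \prod_{j \in T} C_j$ forces $T = \emptyset$. Consequently every $C_j$ is isogenous to some $B_{i(j)}$ with $i(j) \in I$, and regrouping gives $A' \sim \prod_{i \in I} B_i^{e_i}$ with $e_i := \sum_{j : i(j) = i} f_j \geq 1$, as required.

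The main subtlety is structural rather than technical: one cannot hope to compare $A$ directly with $A'_{\mathrm{sf}}$ via Theorem \ref{mainthm}, because the hypothesis of Theorem \ref{maincor} only provides divisibility of radicals, not equality. Enlarging $A'_{\mathrm{sf}}$ to $W$ by adjoining all of the $B_i$ as factors converts this one-sided divisibility into the two-sided equality that Theorem \ref{mainthm} requires, while preserving the square-free condition; recognizing this construction is the central conceptual step.
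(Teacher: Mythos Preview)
Your proof is correct and follows essentially the same route as the paper's: the paper also decomposes $A'$ into simple factors, lets $J$ (your $T$) index those not isogenous to any $B_i$, forms the square-free variety $A'':=\prod_{i=1}^r B_i \times \prod_{j\in J} C_j$ (your $W$), verifies $\rad_\Lambda|A''(\FF_\p)|=\rad_\Lambda|A(\FF_\p)|$, and applies Theorem~\ref{mainthm} to force $J=\emptyset$. Your write-up is slightly more detailed in spelling out the final regrouping into $\prod_{i\in I} B_i^{e_i}$, but the argument is the same.
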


\begin{remarks}\label{mainremark}

We do not know whether Theorems \ref{mainthm} and \ref{maincor} hold without the assumption $K_A^\conn=K$. If $A'$ is an abelian variety defined over $K$ such that $\rad_\Lambda |A(\FF_\p)|=\rad_\Lambda |A'(\FF_\p)|$ for a density $1$ set of $\p\in \Sigma_K$, then one can show that we also have $\rad_\Lambda |A(\FF_\mathfrak{P})|=\rad_\Lambda |A'(\FF_\mathfrak{P})|$ for a density $1$ set of $\mathfrak{P}\in \Sigma_{K_A^\conn}$ (see Lemma \ref{densityone}). Theorem \ref{mainthm} then implies that $A$ and $A'$ are isogenous over $K_A^\conn$.

\end{remarks}

The methods we used in proving Theorem \ref{mainthm} also enable us to prove
the following theorem in \S\ref{splitting}.

\begin{theorem}\label{seppower}
Let $A$ be a simple abelian variety defined over $K$ satisfying $K_A^\conn=K$. There is an integer $e\geq 1$ such that $P_{A,\p}(x)$ is equal to the $e$-th power of a separable polynomial for all $\p\in\Sigma_K$ away from a set of density $0$. 
\end{theorem}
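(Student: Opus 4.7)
The plan is to show that $e = d$ suffices, where $d^2 := [\End^0(A) : Z(\End^0(A))]$. Set $D := \End^0(A)$ and $F := Z(D)$; since $A$ is simple, $D$ is a division algebra. Write $f := [F:\Q]$, so that $D \otimes_\Q \overline{\Q_\ell} \cong \M_d(\overline{\Q_\ell})^f$. By Faltings' isogeny theorem, $D \otimes_\Q \Q_\ell$ is exactly the commutant of $G_\ell$ inside $\End V_\ell(A)$, and extending scalars to $\overline{\Q_\ell}$ produces a canonical $G_\ell$-equivariant decomposition
\[
V_\ell(A) \otimes_{\Q_\ell} \overline{\Q_\ell} \;\cong\; \bigoplus_{i=1}^{f} V_i^{\oplus d},
\]
where the $V_i$ are pairwise non-isomorphic absolutely irreducible representations of $G_\ell$.

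Next I would analyze the weights of this decomposition. Fix a maximal torus $T \subseteq (G_\ell)_{\overline{\Q_\ell}}$ with Weyl group $W$. The crucial structural input, due to Pink and grounded in the Hodge theory of abelian varieties, is that the Hodge cocharacter acts on $V_\ell(A)$ with weights only in $\{0,1\}$, and this forces each irreducible summand $V_i$ to be a \emph{minuscule} representation of $G_\ell$: its weights form a single $W$-orbit and each appears with multiplicity one. Since the $V_i$ are pairwise non-isomorphic, their dominant highest weights are pairwise distinct, and distinct dominant weights generate disjoint $W$-orbits; hence the weight sets of the $V_i$ are pairwise disjoint. Writing $\Omega$ for the (multiplicity-free) union of these $f$ orbits, every $\lambda \in \Omega$ appears in $V_\ell(A) \otimes \overline{\Q_\ell}$ with multiplicity exactly $d$.

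The proof concludes with a Chebotarev-style equidistribution argument. For each of the finitely many pairs of distinct weights $\lambda \neq \mu$ in $\Omega$, the equation $\lambda(t) = \mu(t)$ cuts out a proper closed subvariety of the adjoint quotient $T/W$. Because $K_A^\conn = K$, the image of $\Gal(\overline{K}/K)$ in $G_\ell(\Q_\ell)$ is Zariski dense in the connected group $G_\ell$; combining this with Serre's density results refining Chebotarev, the set of primes $\p$ whose Frobenius image in $T/W$ lands on any of these subvarieties has density zero. For every $\p$ outside this density-zero set, the values $\lambda(\Frob_\p)$ for $\lambda \in \Omega$ are pairwise distinct, so
\[
P_{A,\p}(x) \;=\; \prod_{\lambda \in \Omega} \bigl(x - \lambda(\Frob_\p)\bigr)^d \;=\; Q_\p(x)^d,
\]
where $Q_\p(x) := \prod_{\lambda \in \Omega} (x - \lambda(\Frob_\p))$ is separable by construction. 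This delivers the theorem with $e = d$.

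The hardest step will be the minuscule claim: that each $V_i$ has weights forming a single $W$-orbit with multiplicity one. This is not a formal consequence of Faltings' theorem, but rather a Hodge-theoretic property specific to abelian-variety Galois representations, typically accessed via Pink's description of $\ell$-adic monodromy in terms of the minuscule Hodge cocharacter. A secondary but routine technical point is justifying the density-zero statement in the Chebotarev step, which rests on the openness of the image of Galois in $G_\ell(\Q_\ell)$ (ensured by Bogomolov/Serre) together with standard equidistribution estimates for Frobenius elements in $\ell$-adic reductive groups.
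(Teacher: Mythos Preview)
Your proof is correct and follows essentially the same approach as the paper: decompose $V_\ell(A)\otimes_{\QQ_\ell}\overline{\QQ}_\ell$ into isotypic pieces via Faltings' description of the commutant as $\End^0(A)\otimes\overline{\QQ}_\ell$, invoke Pink's result that each irreducible constituent is minuscule so that every weight occurs with multiplicity exactly $e=[D:Z(D)]^{1/2}$, and then run a Chebotarev argument on $T/W$ to separate the weight values at $\Frob_\p$ for almost all $\p$. Your presentation is slightly more streamlined---you work directly over $\overline{\QQ}_\ell$ rather than first choosing $\ell$ split in the center $E$, and you deduce disjointness of the weight sets of the $V_i$ via uniqueness of the dominant weight in a $W$-orbit rather than by the paper's trace comparison---but these are cosmetic differences, not a different route.
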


\begin{remarks}
We can make the integer $e$ of Theorem \ref{seppower} explicit. Define $D:=\End(A)\otimes_\ZZ \QQ$; it is a division algebra since $A$ is simple. We then have $e=[D:E]^{1/2}$, where $E$ is the center of $D$.
\end{remarks}

\subsection{Some previous results}\label{previous}

First, we recall some earlier known cases which are related to Theorem \ref{mainthm}.
An abelian variety $A$ of dimension $g\geq 1$ defined over a number field $K$ is said to be \textbf{fully of type $\GSp$} if the image $\overline{\rho}_{A,\ell}(\Gal_K)$ of the mod-$\ell$ Galois representation of $A$, which we will define in \S\ref{galrep}, is isomorphic to $\text{GSp}_{2g}(\FF_\ell)$ for sufficiently large primes $\ell$.
Perucca \cite[Theorems 1.1, 1.3]{MR3429320} proved the following theorem which extends earlier results of Hall-Perucca \cite{MR3019751} and Ratazzi \cite[Theorem 1.3]{MR3276321}; we state it in terms of our radical function $\rad_\Lambda$. 

\begin{theorem}\label{perucca}
Let $A$ and $A'$ be abelian varieties defined over a number field $K$. Let $S$ be a set of prime ideals of $\mathcal{O}_K$  of density $1$ for which $A$ and $A'$ have good reduction. 
Let $\Lambda$ be an infinite set of rational primes. Suppose that $\rad_\Lambda |A'(\FF_\p)|$ divides $\rad_\Lambda |A(\FF_\p)|$ for all $\p\in S$.
\begin{enumerate}[(a)]
\item Suppose that each of $A$ and $A'$ is an elliptic curve or an abelian variety fully of type $\GSp$. Then $A$ is isogenous to $A'$. 
\item Suppose that the simple factors of both $A_{\overline{K}}$ and ${A'}_{\overline{K}}$ are an elliptic curve or an abelian variety fully of type $\GSp$. 
Then every simple quotient of $A'_{\overline{K}}$ is also a quotient of $A_{\overline{K}}$.
\end{enumerate}
\end{theorem}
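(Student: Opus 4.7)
The plan is to reduce Theorem \ref{maincor} to Theorem \ref{mainthm} by constructing an auxiliary square-free abelian variety whose mod-$\p$ point counts have the same $\Lambda$-radical as those of $A$.

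First, since $|A'(\FF_\p)|$ depends only on the isogeny class of $A'$ and since $\rad_\Lambda(n^k)=\rad_\Lambda(n)$ for $k\geq 1$, we may replace $A'$ by its square-free part: write $A' \sim \prod_{j=1}^{s} C_j^{f_j}$ with the $C_j$ pairwise non-isogenous simple abelian varieties over $K$ and $f_j\geq 1$, and set $A'_{\mathrm{sf}} := \prod_{j=1}^{s} C_j$. Then $\rad_\Lambda|A'(\FF_\p)| = \rad_\Lambda|A'_{\mathrm{sf}}(\FF_\p)|$ for all $\p$ of good reduction, so the divisibility hypothesis holds with $A'_{\mathrm{sf}}$ in place of $A'$.

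Next, let $J \subseteq \{1,\dots,s\}$ be the set of indices $j$ for which $C_j$ is not isogenous to any $B_i$, and form
\[
M := \prod_{i=1}^{r} B_i \times \prod_{j\in J} C_j.
\]
Taking the ``new'' factors from $J$ rather than all of $A'_{\mathrm{sf}}$ is essential: it ensures that the simple factors $\{B_1,\dots,B_r\}\cup\{C_j : j\in J\}$ of $M$ are pairwise non-isogenous, so that $M$ is a square-free abelian variety over $K$. The goal is to show $A\sim M$ over $K$, since the uniqueness of the decomposition into simple factors then forces $J=\emptyset$; hence every $C_j$ is isogenous to some $B_i$, and the desired conclusion $A' \sim \prod_{i\in I} B_i^{e_i}$ follows by reindexing.

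To apply Theorem \ref{mainthm}, I verify the hypothesis on radicals. The abelian varieties $A\times A'_{\mathrm{sf}}$ and $M$ have the same set of simple factors up to isogeny, namely the $B_i$ together with the $C_j$ for $j\in J$; only the multiplicities may differ. Since the set of rational primes dividing $|X(\FF_\p)|$ for an abelian variety $X$ depends only on the distinct simple factors of $X$ and not on their multiplicities, this gives
\[
\rad_\Lambda|M(\FF_\p)| = \rad_\Lambda\bigl(|A(\FF_\p)|\cdot|A'_{\mathrm{sf}}(\FF_\p)|\bigr) = \lcm\bigl(\rad_\Lambda|A(\FF_\p)|,\,\rad_\Lambda|A'_{\mathrm{sf}}(\FF_\p)|\bigr).
\]
The divisibility hypothesis then collapses the right-hand side to $\rad_\Lambda|A(\FF_\p)|$ on a density $1$ set of $\p$. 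Applying Theorem \ref{mainthm} with $A$ (which is square-free and satisfies $K_A^\conn=K$) and with $M$ in the role of $A'$ yields $A\sim M$ over $K$, completing the reduction.

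The argument is a clean reduction with no serious obstacle: the two observations doing the real work are that $\rad_\Lambda$ depends only on the set of distinct simple factors of an abelian variety, and that $\rad_\Lambda(mn)=\lcm(\rad_\Lambda(m),\rad_\Lambda(n))$ lets us upgrade the divisibility hypothesis to an equality of radicals for the auxiliary square-free variety $M$.
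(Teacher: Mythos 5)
Your argument is not a proof of the statement in question. What you have written is, almost verbatim, the paper's own deduction of Theorem \ref{maincor} from Theorem \ref{mainthm} (the auxiliary square-free variety $\prod_{i=1}^r B_i\times\prod_{j\in J}C_j$, the observation that $\rad_\Lambda$ only sees the distinct simple factors, and the collapse of the radical via the divisibility hypothesis) -- and as a proof of Theorem \ref{maincor} it is fine. But the statement here is Theorem \ref{perucca}, which is a different result with different hypotheses: $A$ and $A'$ are elliptic curves or abelian varieties fully of type $\GSp$ (part (a)), respectively have such varieties as geometric simple factors (part (b)), and crucially $\Lambda$ is only assumed to be an \emph{infinite} set of rational primes, not a density $1$ set.

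The gap is therefore twofold. First, any route through Theorem \ref{mainthm}/\ref{maincor} cannot give Theorem \ref{perucca} as stated, because those theorems require $\Lambda$ of density $1$; the paper points out explicitly that this density hypothesis is essential to its method (one needs infinitely many $\ell\in\Lambda$ splitting completely in auxiliary number fields), and accordingly it only claims the deduction of Theorem \ref{perucca} from Theorem \ref{maincor} \emph{when $\Lambda$ has density $1$}. The theorem in its stated generality is Perucca's, and her proof proceeds by explicitly determining the mod-$\ell$ Galois images for elliptic curves and fully-of-type-$\GSp$ varieties, which is what permits $\Lambda$ to be merely infinite. Second, even in the density $1$ case you have not verified the hypotheses of Theorem \ref{maincor} for the classes appearing in (a) -- e.g.\ that a fully-of-type-$\GSp$ variety is square-free with $K_A^\conn=K$, and what to do for a CM elliptic curve whose CM is not defined over $K$, where $K_A^\conn\neq K$ -- nor have you addressed part (b) at all, which concerns simple quotients of $A'_{\overline{K}}$ and requires an argument over $\overline{K}$ rather than over $K$.
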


When $\Lambda$ has density $1$, Theorem \ref{perucca} can be deduced from Theorem \ref{maincor}. For example, if $A$ is fully of type GSp (and hence $\End(A)=\End(A_{\overline{K}})=\ZZ$), then one can check that $A$ is squarefree and $K_A^\conn = K$, so Theorem \ref{maincor} applies. 
Note that it is important to assume $\Lambda$ is a density $1$ set of rational primes in Theorem \ref{mainthm} since number fields $F$ will arise in our general proof for which we will need infinitely many $\ell\in \Lambda$ that splits completely in $F$. 

When $A$ and $A'$ are products of fully of type GSp or CM elliptic curves,
the Galois images $\overline{\rho}_{A\times A',\ell}(\Gal_K)$ (see \S\ref{galrep}) can be explicitly computed for all sufficiently large $\ell$; in general, these images are mysterious and we will study them by using the $\ell$-adic monodromy groups $G_{A\times A',\ell}$ in \S\ref{firstpart}. 

We also recall the following result which is related to Theorem \ref{seppower}.
Let $A$ be an absolutely simple abelian variety defined over a number field $K$. Zywina showed that if the Mumford-Tate conjecture holds for $A$, then for a density-one set of primes $\p\in\Sigma_K$, $A_\p$ is isogenous to some power of $B$ where $B$ is an absolutely simple abelian vareity defined over $\FF_\p$ \cite{MR3264675}. Using Honda-Tate theory, one then shows that $P_{A,\p}(x)$ is an $e$-th power of an irreducible polynomial for all $\p\in\Sigma_K$ away from a set of density $0$. 

\subsection{Notation} 
We will always denote by $\ell$ a rational prime. The phrase ``almost all'' refers to elements from a density one subset of the set of interest. For a non-zero polynomial $f(x)\in\QQ[x]$ with factorization $f(x)=c\prod_i p_i(x)^{e_i}$ where $c\in \QQ^\times$ and $p_i(x)$ are monic and irreducible, we define $\rad f(x):=\prod_i p_i(x)$. 

Let $\overline{K}$ be a fixed algebraic closure of $K$.
We denote by $\Gal_K$ the absolute Galois group $\Gal(\overline{K}/K)$ of $K$. For an algebraic group $G$ defined over a field, we will denote by $G^\circ$ the connected component of $G$ which contains the identity element; it is an algebraic subgroup of $G$.  

For a free $R$-module $M$, where $R$ is a ring, we denote by $\GL_M$ the group scheme over $R$ for which $\GL_M(B)=\Aut_B(B\otimes_R M)$ for each $R$-algebra $B$.

\subsection{Overview}

Let $A$ and $A'$ be non-zero abelian varieties defined over a number field $K$ that satisfies $K_{A}^\conn= K$. The idea is to first study the case where $A$ and $A'$ are base extended to $K_{A\times A'}^\conn$ (see Proposition \ref{finaleone}) and then show that $K_{A\times A'}^\conn = K$ (see Proposition \ref{finaletwo}).

In \S\ref{galrep}, we review some basics on the $\ell$-adic representations $\rho_{A,\ell}$ arising from the action of $\Gal_K$ on the $\ell$-power torsion points of an abelian variety $A$ over $K$. To each prime $\ell$, we will associate an algebraic group $G_{A,\ell}$ over $\QQ_\ell$ which is called the $\ell$-adic monodromy group.
The Frobenius polynomials $P_{A,\p}(x)$ arise from the images of $\rho_{A,\ell}$ and we can study them using $G_{A,\ell}$. 
In \S\ref{bgweights}, we will give background on reductive groups and their weights. In \S\ref{bgminuscule}, we will study a result related to Pink's work on minuscule representations.

In general, these monodromy groups $G_{A,\ell}$ are mysterious. However, after assuming that they are connected, i.e., $K_A^\conn = K$, then we know just enough properties about these groups that allow us to prove Theorem \ref{mainthm}.

In \S\ref{firstpart}, after extending $K$ to $K_{A\times A'}^\conn$, we study the mod $\ell$ representations $\overline{\rho}_{A\times A',\ell}$ associated to the abelian variety $A\times A'$ for $\ell\in \Lambda$ and show that $\rad P_{A,\p}(x)=\rad P_{A',\p}(x)$ for all $\p\in S$. In \S\ref{secondpart}, we show that the Frobenius polynomials of non-isogenous simple abelian varieties are relatively prime for almost all $\p\in \Sigma_K$; the proof relies heavily on the results from \S\ref{bgminuscule}. 
In \S\ref{pfoffinaleone}, we will then show how to combine \S\ref{firstpart} and \S\ref{secondpart} to show that $A'$ is isogenous to a product of simple factors of $A$ over $K_{A\times A'}^\conn$. In \S\ref{pfoffinaletwo}, we will further show that $K_{A\times A'}^\conn=K$. This gives the proof of Theorem \ref{mainthm}.

In \S\ref{pfofmaincor}, we will show how to use Theorem \ref{mainthm} to prove Theorem \ref{maincor}.

In \S\ref{splitting}, we will use the tools developed in \S\ref{secondpart} to give the proof of Theorem \ref{seppower}. 

In \S\ref{DZ}, we will give a quick proof of why the function $\p\in S\mapsto |A(\FF_\p)|$ determines $A$ up to isogeny, also as promised in the introduction.

\section{Background}\label{background}

\subsection{Galois representations}\label{galrep}
In this section, we let $A$ be an abelian variety of dimension $g\geq 1$ defined over a number field $K$. For each positive integer $m$, let $A[m]$ be the $m$-torsion subgroup of $A(\overline{K})$; it is a free $\ZZ/m\ZZ$-module of rank $2g$. Fix a prime $\ell$. The \textbf{$\ell$-adic Tate module} of $A$ is the inverse limit  
\[
T_\ell(A):=\varprojlim_e A[\ell^e]
\] 
with respect to the multiplication-by-$\ell$ transition maps $A[\ell^{e+1}]\overset{\times\ell}{\to}A[\ell^e]$; it is a free $\ZZ_\ell$-module of rank $2g$. The absolute Galois group $\Gal_K$ naturally acts on $A[m]$ and hence also on $T_\ell(A)$. We thus have a Galois representation
\[
\rho_{A,\ell}:\Gal_K\to \Aut_{\ZZ_\ell}(T_\ell(A)).
\]

Define $V_\ell(A):=T_\ell(A)\otimes_{\ZZ_\ell} \QQ_\ell$; it is a $\QQ_\ell$-vector space of dimension $2g$. By tensoring up with $\QQ_\ell$ and $\FF_\ell$ respectively, $\rho_{A,\ell}$ induces Galois representations 
\[
\rho_{A,\ell}:\Gal_K\to \Aut_{\QQ_\ell}(V_\ell(A))
\]
and 
\[
\overline{\rho}_{A,\ell}:\Gal_K\to \Aut_{\FF_\ell}(A[\ell]),
\]
respectively. For a prime $\p\in\Sigma_K$ such that $\p\nmid \ell$ and $A$ has good reduction, $\rho_{A,\ell}$ is unramified at $\p$, and the \textbf{Frobenius polynomial} of $\p$ is defined by 
\[
P_{A,\p}(x):=\det(xI-\rho_{A,\ell}(\Frob_\p));
\]
it is a monic polynomial of degree $2g$ with integer coefficients and is independent of $\ell$. Note that $P_{A,\p}(x)$ also agrees with the characteristic polynomial of the Frobenius endomorphism $\pi_{A_\p}$ of $A_\p$, where $A_\p$ is the reduction of $A$ modulo $\p$, i.e., the unique polynomial $P(x)\in \ZZ[x]$ such that the isogeny $n-\pi_{A_\p}$ of $A_\p$ has degree $P(n)$ for all integers $n$. 
We have $P_{A,\p}(1)=\deg(1-\pi_{A_\p})=|A(\FF_\p)|$. 
Note that we also have 
\[
P_{A,\p}(x)\equiv\det(xI-\overline{\rho}_{A,\ell}(\Frob_\p))\pmod{\ell}
\]
for all $\p\nmid \ell$ for which $A$ has good reduction.

Let $\mathcal{G}_{A,\ell}$ be the Zariski closure of $\rho_{A,\ell}(\Gal_K)$ in $\GL_{T_\ell(A)}$; it is a group scheme over $\ZZ_\ell$. The generic fibre $G_{A,\ell}:=(\mathcal{G}_{A,\ell})_{\QQ_\ell}$ agrees with the Zariski closure of $\rho_{A,\ell}(\Gal_K)$ in $\GL_{V_\ell(A)}$; it is an algebraic subgroup of $\GL_{V_\ell(A)}$ called the \textbf{$\ell$-adic algebraic monodromy group} of $A$. The special fibre $H_{A,\ell}:=(\mathcal{G}_{A,\ell})_{\FF_\ell}$ is an algebraic subgroup of $\GL_{A[\ell]}$ and we have $\overline{\rho}_{A,\ell}(\Gal_K)\subseteq H_{A,\ell}(\FF_\ell)$.

Let $K_A^{\text{conn}}$ be the fixed field in $\overline{K}$ of the subgroup $(\rho_{A,\ell})^{-1}(G_{A,\ell}^\circ(\QQ_\ell))$ of $\Gal_K$; it is the minimal Galois extension $L$ of $K$ for which $G_{A_L,\ell}$ is connected (it will equal $G^\circ_{A,\ell}$).

\begin{proposition}\label{KAconn}
The number field $K_A^{\conn}$ is independent of the choice of $\ell$. In particular, $K_A^{\conn}=K$ if and only if all the $\ell$-adic monodromy groups $G_{A,\ell}$ are connected.
\end{proposition}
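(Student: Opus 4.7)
The plan is to deduce the independence from the intrinsic characterization of $K_A^\conn$ already cited in the introduction, and then read off the ``in particular'' clause directly. First I would verify that $K_A^\conn(\ell):=\overline{K}^{\rho_{A,\ell}^{-1}(G_{A,\ell}^\circ(\QQ_\ell))}$ is a finite Galois extension of $K$: the component group $\pi_0(G_{A,\ell})=G_{A,\ell}/G_{A,\ell}^\circ$ is finite, so $G_{A,\ell}^\circ(\QQ_\ell)$ is an open, finite-index, normal subgroup of $G_{A,\ell}(\QQ_\ell)$, and its preimage in $\Gal_K$ shares these properties.

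Next I would give the monodromy-theoretic reformulation. For any finite extension $L/K$ inside $\overline{K}$, the group $G_{A_L,\ell}$ is by definition the Zariski closure of $\rho_{A,\ell}(\Gal_L)$ in $\GL_{V_\ell(A)}$, so it is a closed algebraic subgroup of finite index in $G_{A,\ell}$. A brief dimension argument (any closed subgroup of finite index in $G_{A,\ell}$ that is contained in $G_{A,\ell}^\circ$ has the same dimension as $G_{A,\ell}^\circ$, and by irreducibility and smoothness must therefore equal $G_{A,\ell}^\circ$, which is connected) shows that $G_{A_L,\ell}$ is connected if and only if $G_{A_L,\ell}\subseteq G_{A,\ell}^\circ$, if and only if $\rho_{A,\ell}(\Gal_L)\subseteq G_{A,\ell}^\circ(\QQ_\ell)$, if and only if $L\supseteq K_A^\conn(\ell)$. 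So $K_A^\conn(\ell)$ is the smallest finite extension of $K$ in $\overline{K}$ over which the $\ell$-adic monodromy group becomes connected.

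I would then invoke the intrinsic characterization recalled in the introduction and proved in \cite[Theorem 0.1]{MR1441234}: $K_A^\conn$ equals the smallest extension of $K$ inside $\overline{K}$ contained in $K(A[\ell'])$ for every sufficiently large prime $\ell'$. Since this description does not single out any particular $\ell$, the field $K_A^\conn(\ell)$ agrees with this intrinsic field and is thus independent of $\ell$. Granting independence, the ``in particular'' statement follows at once: $K_A^\conn=K$ is equivalent to $\rho_{A,\ell}(\Gal_K)\subseteq G_{A,\ell}^\circ(\QQ_\ell)$ for some one (equivalently every) $\ell$, and by Zariski density of $\rho_{A,\ell}(\Gal_K)$ in $G_{A,\ell}$ this forces $G_{A,\ell}=G_{A,\ell}^\circ$, i.e.\ connectedness. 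The hard part is the independence step itself, where the substantive content is the cited connectedness criterion; the rest of the argument is a routine unpacking of definitions.
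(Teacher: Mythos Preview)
Your proposal is correct and follows the same route as the paper, whose proof is simply a citation to Serre \cite[\#133]{MR1730973} and Larsen--Pink \cite{MR1441234}; you have filled in the routine group-theoretic details around that same citation and correctly identified the connectedness criterion as the substantive input. One minor point of phrasing: the torsion-field characterization quoted from the introduction is stated for the already-well-defined $K_A^\conn$, so to avoid apparent circularity you should appeal directly to \cite[Theorem~0.1]{MR1441234}, which indeed shows that each $K_A^\conn(\ell)$ coincides with this $\ell$-independent field.
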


\begin{proof}
See Serre \cite[\#133 p.17]{MR1730973} and \cite{MR1441234}.
\end{proof}

Note that if $A$ and $A'$ are abelian varieties defined over the number field $K$, then $K_{A\times A'}^{\conn}\supseteq K_A^{\conn}\cdot K_{A'}^{\conn}$.

\begin{proposition}\label{reductive}
Assume that $K_A^{\text{conn}}=K$.
Then the $\ZZ_\ell$-group scheme $\mathcal{G}_{A,\ell}$ is reductive for all sufficiently large $\ell$. The algebraic group $H_{A,\ell}$ is connected and reductive for all sufficiently large $\ell$.
\end{proposition}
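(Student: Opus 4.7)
The plan is to split the proof into two parts: first show that the generic fibre $G_{A,\ell}$ is reductive over $\QQ_\ell$ for every $\ell$, and then show that the integral Zariski closure $\mathcal{G}_{A,\ell}$ is a reductive $\ZZ_\ell$-group scheme for almost all $\ell$. The second assertion about $H_{A,\ell}$ will then come for free, since by definition the special fibre of a reductive $\ZZ_\ell$-group scheme is a connected reductive algebraic group over $\FF_\ell$.

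For the generic fibre, I would invoke Faltings' semisimplicity theorem: $V_\ell(A)$ is semisimple as a $\Gal_K$-module. Because $\rho_{A,\ell}(\Gal_K)$ is Zariski-dense in $G_{A,\ell}$ by construction, a subspace of $V_\ell(A)$ is $G_{A,\ell}$-stable if and only if it is $\Gal_K$-stable, so $V_\ell(A)$ is also a faithful semisimple $G_{A,\ell}$-representation. The assumption $K_A^\conn=K$ together with Proposition~\ref{KAconn} gives that $G_{A,\ell}$ is connected. It is then standard that a connected linear algebraic group in characteristic zero carrying a faithful semisimple representation has trivial unipotent radical: its unipotent radical would act on each simple $G_{A,\ell}$-constituent of $V_\ell(A)$, and by Lie-Kolchin any such action has non-zero fixed vectors, which by simplicity and normality of the radical forces trivial action and hence triviality by faithfulness. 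Consequently $G_{A,\ell}$ is reductive over $\QQ_\ell$ for every $\ell$, with no largeness hypothesis.

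The main obstacle is the passage to the integral model, and for this I would appeal to the classical theorem, due to Serre with later refinements by Wintenberger, that for all sufficiently large primes $\ell$ the image $\rho_{A,\ell}(\Gal_K)$ is a hyperspecial maximal compact subgroup of $G_{A,\ell}(\QQ_\ell)$. The underlying input is that $\rho_{A,\ell}$ is crystalline at primes of $K$ above $\ell$ with Hodge-Tate weights in $\{0,1\}$, so Fontaine-Laffaille theory pins down the integral structure once $\ell$ is larger than the ramification of $K/\QQ$, while the connected reductive $\QQ_\ell$-group $G_{A,\ell}$ is unramified for almost all $\ell$. Granting this, the Zariski closure $\mathcal{G}_{A,\ell}$ of $\rho_{A,\ell}(\Gal_K)$ in $\GL_{T_\ell(A)}$ is identified with a smooth reductive $\ZZ_\ell$-model of $G_{A,\ell}$ for all sufficiently large $\ell$, and its special fibre $H_{A,\ell}$ is then automatically connected and reductive over $\FF_\ell$. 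The hard work sits entirely in this hyperspecial-image input; the generic-fibre reductivity is nearly immediate once Faltings and Zariski-density are in hand.
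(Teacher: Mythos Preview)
Your proposal is correct and follows essentially the same approach as the paper: both defer the hard integral statement to the literature (the paper cites Larsen--Pink \cite{MR1441234} with Wintenberger's correction \cite{MR1944805}, while you invoke the equivalent hyperspecial-image theorem of Serre and Wintenberger) and then deduce connectedness and reductivity of $H_{A,\ell}$ formally from the reductive $\ZZ_\ell$-structure. Your extra paragraph deriving generic-fibre reductivity from Faltings' semisimplicity is correct but not strictly needed here, since it is subsumed by the integral result and is recorded separately in the paper as Theorem~\ref{bgAGell}(c).
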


\begin{proof}
Reductiveness of $\mathcal{G}_{A,\ell}$ was proved in \cite{MR1441234}; see also \cite[\S1.3]{MR1944805} for a minor correction of the proof.
Connectedness of $H_{A,\ell}$ follows from the reductiveness of $\mathcal{G}_{A,\ell}$ and the connectedness of $G_{A,\ell}$. 
\end{proof}   

For a reductive algebraic group $G$ over a field, we say that $G$ is \textbf{split} if it contains a split maximal torus. One can find a more precise definition in the next section. Here are some properties concerning $H_{A,\ell}$ related to Serre's work which will be useful in \S\ref{firstpart}; in particular, part (a) shows that $\overline{\rho}_{A,\ell}(\Gal_K)$ ``almost equals'' $H_{A,\ell}(\FF_\ell)$. 

\begin{theorem}\label{bgHell}
Assume that $K_A^{\text{conn}}=K$. 
\begin{enumerate}[(a)]

\item There exists a constant $M_A$, depending only on $A$, such that $[H_{A,\ell}(\FF_\ell):\overline{\rho}_{A,\ell}(\Gal_K)]\leq M_A$ for all $\ell$. 

\item For $\ell$ sufficiently large, $H_{A,\ell}$ is connected, reductive and contains the group $\mathbb{G}_m$ of homotheties.

\item There is a finite Galois extension $F$ of $\QQ$ such that $H_{A,\ell}$ is split for all sufficiently large $\ell$ that splits completely in $F$. 
 
\end{enumerate}
\end{theorem}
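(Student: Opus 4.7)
The plan is to prove (a), (b), (c) separately, each time reducing to a known structural theorem about the $\ell$-adic images of $A$ that becomes available once we assume $K_A^{\conn}=K$.

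For (a), I would invoke Serre's open image theorem in its connected form. Under $K_A^{\conn}=K$, the image $\rho_{A,\ell}(\Gal_K)$ is an open subgroup of $\mathcal{G}_{A,\ell}(\ZZ_\ell)$, and Serre's refinement shows that the index $[\mathcal{G}_{A,\ell}(\ZZ_\ell):\rho_{A,\ell}(\Gal_K)]$ is bounded above by a constant depending only on $A$, uniformly in $\ell$. For $\ell$ sufficiently large, $\mathcal{G}_{A,\ell}$ is a smooth (reductive) $\ZZ_\ell$-group scheme by Proposition \ref{reductive}, so the reduction map $\mathcal{G}_{A,\ell}(\ZZ_\ell)\twoheadrightarrow H_{A,\ell}(\FF_\ell)$ is surjective; pushing forward the bound on the index gives the claim for almost all $\ell$, and the remaining finitely many primes can be absorbed into $M_A$.

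For (b), the connectedness and reductiveness of $H_{A,\ell}$ for large $\ell$ are already supplied by Proposition \ref{reductive}, so only the containment of $\G_m$ remains. Here I would appeal to Bogomolov's theorem: the image $\rho_{A,\ell}(\Gal_K)$ contains an open subgroup of the scalar homotheties $\ZZ_\ell^\times\cdot \id$ on $V_\ell(A)$. Concretely, the determinant of $\rho_{A,\ell}$ is a nontrivial power of the $\ell$-adic cyclotomic character by the Weil pairing, and Bogomolov promotes this to openness of the scalar subgroup in the image. Taking Zariski closures inside $\GL_{T_\ell(A)}$ gives $\G_m\subseteq \mathcal{G}_{A,\ell}$ as closed subgroup schemes over $\ZZ_\ell$, and reducing modulo $\ell$ yields $\G_m\subseteq H_{A,\ell}$.

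For (c), the plan is to exhibit a single finite Galois extension $F/\QQ$ whose completely split primes force $H_{A,\ell}$ to be split. Fix a maximal torus $T_\ell\subseteq G_{A,\ell}$. By reductivity, $T_\ell$ splits over the maximal unramified extension $\QQ_\ell^{\unr}$, so $G_{A,\ell}$ is $\QQ_\ell$-split exactly when the arithmetic Frobenius at $\ell$ acts trivially on the character lattice $X^*(T_\ell)$. The crucial input is the $\ell$-independence of this Galois action: by Serre's structural analysis of the monodromy groups of abelian varieties (refined by Larsen--Pink), for all but finitely many $\ell$ this action factors through a single finite quotient $\Gal(F/\QQ)$ of the absolute Galois group of $\QQ$, where $F$ is a number field attached to $A$. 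For $\ell$ that splits completely in $F$, the Frobenius is therefore trivial on $X^*(T_\ell)$, so $T_\ell$, and hence $G_{A,\ell}$, are split over $\QQ_\ell$; the split structure lifts to the integral model $\mathcal{G}_{A,\ell}$ (reductive over $\ZZ_\ell$) and then descends to the special fiber $H_{A,\ell}$.

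The main obstacle is part (c): the required $\ell$-independence of the Galois action on the root datum of $G_{A,\ell}$ is a delicate input that ultimately rests on the full strength of Serre's and Larsen--Pink's work on the $\ell$-adic representations of abelian varieties. Parts (a) and (b), by contrast, reduce cleanly to established open image and scalar containment theorems once the hypothesis $K_A^{\conn}=K$ is in place.
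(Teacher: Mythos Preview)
Your overall strategy---reducing each of (a), (b), (c) to known structural theorems of Serre, Bogomolov, Larsen--Pink, and Wintenberger---is exactly what the paper does: its proof consists entirely of citations to Serre's 1985--86 Coll\`ege de France course (via Wintenberger's identification of Serre's group with $H_{A,\ell}^\circ$) for (a) and (b), and to a lemma of Zywina for (c). Your treatment of (a) is a correct reformulation of the same input, and for (b) your appeal to Bogomolov is a legitimate and slightly more direct alternative to invoking Serre's construction.

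There is, however, a genuine gap in your argument for (c). You write that ``by reductivity, $T_\ell$ splits over the maximal unramified extension $\QQ_\ell^{\unr}$.'' This is false in general: a connected reductive group over a local field becomes \emph{quasi-split} over the maximal unramified extension, but not necessarily split; equivalently, inertia need not act trivially on $X^*(T_\ell)$. Without knowing that the splitting field of $T_\ell$ is unramified, you cannot reduce the question of $\QQ_\ell$-splitness to the action of the arithmetic Frobenius alone. The unramified splitting of $G_{A,\ell}$ (for large $\ell$) is itself part of the Serre/Larsen--Pink package---it comes from the fact that these groups arise from a common $\QQ$-model up to inner twist---and is not a free consequence of reductivity. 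Once you grant that input, your sketch of the $\ell$-independence of the Frobenius action on the root datum is the right idea; but as written, the first step of your argument for (c) assumes something that is essentially as deep as the conclusion.
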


\begin{proof}
In Serre's 1985--1986 course at the Coll\`{e}ge de France \cite[\#136]{MR1730973}, he constructed for each prime $\ell$ a connected, reductive algebraic subgroup of $\GL_{A[\ell]}=\GL_{T_\ell(A),\FF_\ell}$ that satisfies all the properties as stated in (a) and (b). Wintenberger \cite[\S3.4]{MR1944805} showed that this subgroup is isomorphic to the connected component of $H_{A,\ell}$ when $\ell$ is sufficiently large.
For more details, one can refer to \cite[\#137]{MR1730973}, \cite[\#138]{MR1730973}, \cite{MR1944805} and \cite[Proposition 2.10]{MR3264675}. 

For (c), see \cite[Lemma 3.2]{MR3474973}.
\end{proof}

The following results concerning $\rho_{A,\ell}$ will be useful in \S\ref{secondpart}.

\begin{theorem}[Faltings]\label{bgAGell}
\noindent
\begin{enumerate}[(a)]
\item The Galois representation $\rho_{A,\ell}:\Gal_K\to \Aut_{\QQ_\ell}(V_\ell(A))$ is semisimple.
\item For abelian varieties $A$ and $A'$ defined over $K$, the natural homomorphism 
\[
\Hom(A,A')\otimes_\ZZ \QQ_\ell \to \Hom_{\QQ_\ell[\Gal_K]}(V_\ell(A),V_\ell(A'))
\]
is an isomorphism. In particular, $\End_{\QQ_\ell[\Gal_K]}(V_\ell(A))$ is isomorphic to $\End(A)\otimes_\ZZ \QQ_\ell$. 
\item The group $G^\circ_{A,\ell}$ is reductive.
\end{enumerate}
\end{theorem}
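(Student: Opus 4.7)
The plan is to follow Faltings' original strategy from \cite{MR861971}. The heart of the argument is a finiteness theorem: for fixed integers $g,d\geq 1$ and a fixed finite set of primes $S$ of $K$, there are only finitely many $K$-isomorphism classes of polarized abelian varieties $(B,\lambda)$ of dimension $g$ and polarization degree $d^2$ with good reduction outside $S$. This finiteness is established by defining the Faltings height on the moduli space, proving that heights change in a controlled way under isogenies (using Raynaud's theory of group schemes and the Hodge--Tate decomposition of $V_\ell(A)$), and applying Northcott's theorem. Zarhin's trick $(A\times A^\vee)^4$ reduces the general statement to the case of principal polarizations so as to bring one into the Siegel moduli space.

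Granting this finiteness theorem, I would prove (b) first. Given $f \in \Hom_{\QQ_\ell[\Gal_K]}(V_\ell(A),V_\ell(A'))$, one approximates $f$ by a sequence of integral Galois-equivariant maps $f_n : T_\ell(A)\to T_\ell(A')$. Each graph of $f_n$ modulo $\ell^n$ is a Galois-stable finite flat subgroup scheme of $(A\times A')[\ell^n]$, and the quotients $(A\times A')/\ker f_n$ form an infinite family of abelian varieties $K$-isogenous to $A\times A'$. By the finiteness theorem, infinitely many of these quotients are $K$-isomorphic, and pairing up two isomorphic quotients yields an element of $\End(A\times A')$ whose $\ell$-adic realization recovers $f$ (up to a controllable error, which one eliminates by a density argument). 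Injectivity of the natural map follows from the faithfulness of $V_\ell(-)$ on the isogeny category.

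To deduce (a), suppose $W\subset V_\ell(A)$ is a nonzero proper Galois-stable subspace. Consider, for each $n\geq 0$, the lattice $L_n := T_\ell(A) + \ell^{-n}(W\cap T_\ell(A))$; these are all Galois-stable and give rise to an infinite sequence of abelian varieties $A_n$ isogenous to $A$ with controlled level structure at $\ell$. By the finiteness theorem, infinitely many of the $A_n$ are isomorphic, producing a nontrivial endomorphism of $A$ in the isogeny class whose image under the map of (b) is a Galois-equivariant projector onto a complement of $W$. Hence $V_\ell(A)$ is semisimple as a $\QQ_\ell[\Gal_K]$-module. Part (c) is then an essentially formal consequence: once the tautological representation of $G_{A,\ell}\subseteq \GL_{V_\ell(A)}$ is semisimple, a standard fact in characteristic zero (the unipotent radical of $G^\circ_{A,\ell}$ would act trivially on every isotypic piece, forcing it to be trivial) yields that $G^\circ_{A,\ell}$ is reductive.

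The main obstacle is, of course, the finiteness theorem itself, which is by far the deepest input; everything else is a clean cohomological repackaging of that finiteness. In a contemporary write-up one would cite Faltings' theorem directly and present (a)--(c) as black boxes, as the author does here.
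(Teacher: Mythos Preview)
Your sketch is correct and follows Faltings' original argument; the paper itself gives no proof at all but simply cites \cite[Theorems~3--4]{MR861971}, so your proposal is strictly more detailed than what appears here. One small gap worth noting in your deduction of (c): you pass from semisimplicity of $V_\ell(A)$ as a $G_{A,\ell}$-module to the unipotent radical of $G^\circ_{A,\ell}$ acting trivially, but the intermediate step---that $V_\ell(A)$ is semisimple as a $G^\circ_{A,\ell}$-module---requires Clifford theory (restriction of a semisimple representation to a normal subgroup of finite index remains semisimple in characteristic zero), which you should make explicit.
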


\begin{proof}
See \cite[Theorems 3--4]{MR861971}.
\end{proof}

\begin{lemma}\label{endA}
Assume that $K_A^{\text{conn}}=K$.
\begin{enumerate}[(a)]
\item Then we have $\End(A_{\overline{K}})\otimes_\ZZ \QQ = \End(A)\otimes_\ZZ \QQ$. 
\item For any simple abelian subvariety $B$ of $A$, the abelian variety $B_{\overline{K}}$ is also simple.
\end{enumerate}

\end{lemma}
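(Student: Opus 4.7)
The plan is to derive both parts from Faltings' isomorphism (Theorem~\ref{bgAGell}(b)) combined with the hypothesis that every $G_{A,\ell}$ is connected (Proposition~\ref{KAconn}).

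For part (a), the inclusion $\End(A)\otimes\QQ\hookrightarrow\End(A_{\overline{K}})\otimes\QQ$ is automatic, and both sides are finite-dimensional $\QQ$-vector spaces, so it suffices to compare $\QQ_\ell$-dimensions for a single prime $\ell$. By Faltings applied over $K$, $\End(A)\otimes\QQ_\ell$ equals the commutant of $\rho_{A,\ell}(\Gal_K)$ in $\End_{\QQ_\ell}(V_\ell(A))$; since commutants in $\End V_\ell(A)$ are determined by Zariski closures, this is the same as the commutant of $G_{A,\ell}$. Next, pick a finite Galois extension $L/K$ over which every endomorphism of $A_{\overline{K}}$ is defined (such an $L$ exists because $\End(A_{\overline{K}})$ is finitely generated as a $\ZZ$-module and $\Gal_K$ acts on it continuously). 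Faltings applied over $L$ then identifies $\End(A_{\overline{K}})\otimes\QQ_\ell=\End(A_L)\otimes\QQ_\ell$ with the commutant of $G_{A_L,\ell}$. But $\Gal_L$ has finite index in $\Gal_K$, so $G_{A_L,\ell}$ shares its identity component with $G_{A,\ell}$; under the hypothesis $K_A^{\conn}=K$, Proposition~\ref{KAconn} forces $G_{A,\ell}=G_{A,\ell}^\circ$, hence $G_{A_L,\ell}=G_{A,\ell}$. The two commutants therefore agree, giving (a).

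For part (b), I first verify $K_B^{\conn}=K$. The closed immersion $B\hookrightarrow A$ yields a $\Gal_K$-stable inclusion $V_\ell(B)\subseteq V_\ell(A)$, so $G_{A,\ell}$ preserves $V_\ell(B)$, and the induced restriction morphism $G_{A,\ell}\to\GL_{V_\ell(B)}$ has image whose Zariski closure is exactly $G_{B,\ell}$. Since $G_{A,\ell}$ is connected, so is its image and hence its closure, making $G_{B,\ell}$ connected; Proposition~\ref{KAconn} then gives $K_B^{\conn}=K$. Applying part (a) to $B$, we obtain $\End(B_{\overline{K}})\otimes\QQ=\End(B)\otimes\QQ$, which is a division algebra because $B$ is simple over $K$. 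Since a finite-dimensional $\QQ$-algebra of the form $\End(X)\otimes\QQ$ for an abelian variety $X$ is a division algebra if and only if $X$ is simple, it follows that $B_{\overline{K}}$ is simple.

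The argument is largely bookkeeping; the one structural point is that under the connectedness hypothesis, passing to a finite extension does not enlarge the algebraic monodromy group, which is precisely the content of Proposition~\ref{KAconn}. I do not expect any serious obstacle beyond this.
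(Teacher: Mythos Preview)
Your argument is correct. For part (a) you supply a self-contained proof via Faltings and the connectedness hypothesis, whereas the paper simply cites \cite[Proposition 2.2(iii)]{MR3264675}; your argument is essentially the standard one behind that citation. For part (b) the approaches genuinely diverge: you first prove $K_B^{\conn}=K$ by exhibiting $G_{B,\ell}$ as the image of the connected group $G_{A,\ell}$ under restriction to $V_\ell(B)$, then apply part (a) to $B$ and use that $\End(B)\otimes\QQ$ is a division algebra. The paper instead stays with $A$: assuming $B_{\overline K}$ is not simple, it produces (via Poincar\'e reducibility) a $\phi\in\End(A_{\overline K})$ whose image is a nonzero proper abelian subvariety of $B_{\overline K}$, invokes part (a) for $A$ to descend $\phi$ to $K$, and reaches a contradiction with the simplicity of $B$. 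Your route has the pleasant byproduct of establishing $K_B^{\conn}=K$, which is of independent interest; the paper's route avoids the monodromy computation for $B$ and works entirely inside $\End(A)$. Both are short and either would be acceptable.
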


\begin{proof}
For (a), see \cite[Proposition 2.2 (iii)]{MR3264675}. 
For (b), 
let $B/K$ be a simple abelian subvariety of $A$. 
Suppose $B_{\overline{K}}$ is not simple. Then there exists $\phi\in \End(A_{\overline{K}})$ such that $\phi(A_{\overline{K}})$ is a non-zero proper abelian subvariety of $B_{\overline{K}}$. By (a), $\phi$ is defined over $K$ and so $\phi(A)$ is a non-zero proper abelian subvariety of $B$. This contradicts our assumption that $B$ is simple.
\end{proof}

\subsection{Reductive groups and weights}\label{bgweights}
Let $G$ be a connected reductive group defined over a perfect field $k$ and fix an algebraic closure $\overline{k}$ of $k$. A \textbf{torus} of $G$ is an algebraic subgroup $T\subseteq G$ such that $T_{\overline{k}}$ is isomorphic to $(\mathbb{G}_{m})_{\overline{k}}^r$ for some integer $r\geq 0$. We say that $T$ is \textbf{split} if it is isomorphic to $(\mathbb{G}_{m})_{k}^r$. 
A \textbf{maximal torus} of $G$ is a torus $T$ of $G$ that is not contained in any larger torus of $G$; the torus $T_{\overline{k}}$ is a maximal torus of $G_{\overline{k}}$. 
Since $G$ is a reductive group, any two maximal tori of $G_{\overline{k}}$ are conjugate to each other by some element of $G(\overline{k})$. The \textbf{rank} $r$ of $G$ is the dimension of any maximal torus. We say that $G$ is \textbf{split} if it contains a split maximal torus. 

Fix a maximal torus $T$ of $G$. Denote by $X(T)$ the group of characters $T_{\overline{k}}\to (\mathbb{G}_{m})_{\overline{k}}$; it is a free abelian group of rank $r$. The \textbf{(absolute) Weyl group} of $G$ with respect to $T$ is defined as
\[
W(G,T):=N_G(T)(\overline{k})/T(\overline{k})
\] 
where $N_G(T)$ is the normalizer of $T$ in $G$. For $n\in N_G(T)(\overline{k})$, the homomorphism $\iota_n:T_{\overline{k}}\to T_{\overline{k}}$ defined by $t\mapsto ntn^{-1}$ gives an automorphism $\alpha\mapsto \alpha\circ (\iota_n)^{-1}$ of $X(T)$. This gives a faithful left action of $W(G,T)$ on $X(T)$. 

Suppose we have a representation $\rho:G\to \GL_V$ where $V$ is a finite dimensional vector space over $k$.
For each character $\alpha\in X(T)$, let $V(\alpha)$ be the subspace of $V\otimes_k \overline{k}$ consisting of those vectors $v$ for which $\rho(t)\cdot v = \alpha(t) v$ for all $t\in T(\overline{k})$. We say that $\alpha\in X(T)$ is a \textbf{weight} of $\rho$ if $V(\alpha)\not=0$, and we denote the (finite) set of such weights by $\Omega(\rho)$ or $\Omega(V)$. Note that $W(G,T)$ acts on $\Omega(V)$. We have a decomposition $V\otimes_k \overline{k}=\bigoplus_{\alpha\in\Omega(V)} V(\alpha)$ and hence for each $t\in T(\overline{k})$, the characteristic polynomial of $\rho(t)$ is given by
\[
\det(xI-\rho(t))=\prod_{\alpha\in \Omega(V)}(x-\alpha(t))^{m_\alpha}
\]
where $m_{\alpha}:=\dim_{\overline{k}} V(\alpha)$ is the \textbf{multiplicity} of $\alpha$. Note that $m_\alpha=m_\beta$ if $\alpha$ and $\beta$ are in the same $W(G,T)$-orbit.

\subsection{Weak Mumford-Tate pairs and minuscule representations}\label{bgminuscule}

Let $F$ be a field of characteristic zero. 
Suppose $G$ is a connected reductive algebraic group over $F$ with a faithful representation $\rho: G\injects \GL_U$ where $U$ is a finite dimensional $F$-vector space. We have an isomorphism $X(\mathbb{G}_m)=\ZZ$, where an integer $n\in \ZZ$ corresponds to the character $t\mapsto t^n$.  

\begin{definition}
The pair $(G,\rho)$ is called a \textbf{weak Mumford-Tate pair with weights $\{0,1\}$} if there exists a set of cocharacters $\{\mu:(\mathbb{G}_{m})_{\overline{F}}\to G_{\overline{F}}\}$ such that 
\begin{enumerate}[(i)]
\item $G_{\overline{F}}$ is generated by the images of the $G(\overline{F})$-conjugates of all $\mu$, and
\item the weights of each $\rho\circ \mu$ are in $\{0,1\}$. 
\end{enumerate}
\end{definition}

Fix a maximal torus $T$ of $G$. 
Let $W(G,T)$ be the (absolute) Weyl group of $G$ with respect to $T$. Recall that $W(G,T)$ acts on $\Omega(\rho)\subseteq X(T)$. 
In order to study how $W(G,T)$ acts on $\Omega(\rho)$ when $(G,\rho)$ is a weak Mumford-Tate pair, we will also need the following definition.

\begin{definition}
We say that an irreducible representation $\rho: G\to \GL_U$ is \textbf{minuscule} if the Weyl group $W(G,T)$ acts transitively on the weights of $\rho$, i.e., the weights of $\rho$ form a single orbit under the action of the Weyl group $W(G,T)$.
\end{definition}

See \cite[Ch.VIII \S3]{MR2109105} for an equivalent definition of minuscule using $\Omega(U)$-saturations. 

If $F$ is algebraically closed, then $(G,\rho)$ being minuscule implies that all the weights of $\rho$ must have multiplicity $1$ since there exists a highest weight of multiplicity $1$ for $\rho$ (see for example \cite[\S31.3]{MR0396773}).  
We obtained the proof of the following theorem by collecting ideas from Serre \cite[\S3]{MR563476} and Pink \cite[\S4]{MR1603865}.

\begin{theorem}\label{WMT}
Suppose $G$ is a connected reductive group over $F$ with a faithful representation $\rho: G\injects \GL_U$ where $U$ is a finite dimensional $F$-vector space.
If $(G,\rho)$ is a weak Mumford-Tate pair of weights $\{0,1\}$, then each irreducible representation $V\subseteq U\otimes_F \overline{F}$ of $G_{\overline{F}}$ is minuscule.
\end{theorem}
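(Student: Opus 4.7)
The plan is first to base change to the algebraic closure of $F$, which is harmless since being minuscule is defined after base change. Fix a maximal torus $T \subseteq G$, and let $V$ denote the given irreducible subrepresentation of $U \otimes_F \overline{F}$. I would use the standard criterion that $V$ is minuscule if and only if $|\langle \alpha, \beta^\vee \rangle| \leq 1$ for every weight $\alpha \in \Omega(V)$ and every root $\beta$ of $G$ with respect to $T$; this follows from $\mathfrak{sl}_2$-string theory and the saturation property of the weights of an irreducible representation.

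Suppose for contradiction that $V$ is not minuscule, so there exist $\alpha \in \Omega(V)$ and a root $\beta$ with $k := \langle \alpha, \beta^\vee \rangle \geq 2$ (after replacing $\beta$ by $-\beta$ if necessary). The $\mathfrak{sl}_2$-string through $\alpha$ in the direction $\beta$ shows that $\alpha - j\beta \in \Omega(V)$ for $j = 0, 1, \ldots, k$. Now pick any cocharacter $\mu$ from the weak Mumford--Tate data; after $G$-conjugation we may assume $\mu$ factors through $T$. The hypothesis $\langle \gamma, \mu \rangle \in \{0, 1\}$ for $\gamma \in \Omega(V)$ forces the $k+1 \geq 3$ values $\langle \alpha, \mu \rangle - j\langle \beta, \mu \rangle$ to lie in the two-element set $\{0, 1\}$. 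Since they form an arithmetic progression, the common difference $-\langle \beta, \mu \rangle$ must vanish. Applying the same reasoning to the pair $(w\alpha, w\beta)$ for each $w \in W(G, T)$ shows that $\mu$ is orthogonal to the entire $W(G, T)$-orbit of $\beta$ in $X(T) \otimes_\ZZ \QQ$.

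To extract a contradiction, decompose the derived subgroup $G^{\mathrm{der}}$ into almost-simple factors $G_1, \ldots, G_s$; the root $\beta$ lies in the root system of some factor $G_i$. Note that $G_i$ must act non-trivially on $V$: otherwise $\alpha$ would be trivial on $T \cap G_i$ and $\langle \alpha, \beta^\vee \rangle$ would be $0$, not $\geq 2$. Because $W(G_i, T \cap G_i)$ acts irreducibly on the $\QQ$-span of the roots of $G_i$, the $W$-orbit of $\beta$ spans this space, and the previous paragraph forces the $G_i$-component of every cocharacter $\mu$ in the weak Mumford--Tate data to vanish in $X_*(T \cap G_i) \otimes \QQ$. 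Equivalently, composing $\mu$ with the natural surjection $\pi_i \colon G \twoheadrightarrow G_i^{\mathrm{ad}}$ yields the trivial cocharacter, and this triviality is preserved under $G$-conjugation. Thus every $G$-conjugate of every generating $\mu$ dies under $\pi_i$, so the subgroup of $G$ they generate maps to the identity in $G_i^{\mathrm{ad}}$. But by hypothesis this subgroup is all of $G$, while $\pi_i$ is surjective onto the non-trivial group $G_i^{\mathrm{ad}}$: contradiction.

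The main obstacle is the passage from a local constraint (a single cocharacter being orthogonal to one Weyl orbit of a root) to a global contradiction about $G$. This requires the decomposition of $G^{\mathrm{der}}$ into almost-simple factors, the spanning property of Weyl orbits for irreducible root systems, and careful use of the fact that $G$-conjugation commutes with the projections onto the adjoint simple factors; it is this last point that ensures triviality of a single projected cocharacter propagates to triviality of the whole subgroup generated by its $G$-conjugates.
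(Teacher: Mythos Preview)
Your proof is correct and takes a genuinely different route from the paper's. The paper decomposes $G$ as an almost direct product of its connected center and almost-simple factors $G_1,\ldots,G_s$, writes $V \cong V_0\otimes\cdots\otimes V_s$ with each $V_i$ irreducible for $G_i$, and reduces to showing each factor representation is minuscule; it then invokes Pink's classification \cite[Table~4.2]{MR1603865} of the possible pairs $(\tilde G_i,V_i)$ that can occur in a weak Mumford--Tate pair and checks case-by-case that every entry is minuscule. Your argument is classification-free: the key point is that an $\mathfrak{sl}_2$-string of length at least three inside $\Omega(V)$ forces any cocharacter $\mu$ with weights in $\{0,1\}$ to pair trivially with the corresponding root, and propagating this through the Weyl orbit and the simple-factor decomposition shows that every generating cocharacter (and all its conjugates) dies under the projection $\pi_i\colon G\to G_i^{\mathrm{ad}}$, contradicting the generation hypothesis. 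Your approach is more self-contained and conceptual; the paper's route, by contrast, yields the explicit list of simple factors and representations that can occur, which is stronger information but is not needed for the statement itself.
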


\begin{proof}
First of all, 
$(G,\rho)$ remains a weak Mumford-Tate pair if we base extend $F$ to $\overline{F}$, so without loss of generality we may assume that $F=\overline{F}$. 

Consider an irreducible subrepresentation $\rho_V:G\to \GL_{V}$ of $\rho$. 
Let $G_0:=Z$ denote the identity component of the center of $G$. 
If $G\not=Z$, let $G_1,\ldots, G_s$ denote the 
minimal closed connected normal subgroups of the derived group $G^\text{der}$ with positive dimension. Each $G_i$ is almost simple. We then have an almost direct product $G = G_0\cdot G_1\cdots G_s$ (see for example \cite[\S27.5]{MR0396773}). 
So multiplication gives a homomorphism
\[
\phi: G_0\times G_1\times \cdots \times G_s \to G = G_0\cdot G_1\cdots G_s
\]
with finite kernel (contained in the center of $G$ since $\text{char}(F)=0$). Moreover, since $\rho_V$ is irreducible, there exists irreducible representations $\rho_i: G_i\to \GL_{V_i}$ for some finite dimensional $F$-vector spaces $V_i$, such that $\rho_V\isom \rho_0\otimes \cdots \otimes \rho_s$. We can assume that $V=V_0\otimes \cdots \otimes V_s$.

For each $i$, choose a maximal torus $T_i\subseteq G_i$. Then $\prod_i T_i$ is a maximal torus of $\prod_i G_i$. Let $T=T_0\cdots T_s$ (i.e., the image of $\prod T_i$ under $\phi$); it is a maximal torus of $G$.  
Let $\Omega(V_i)$ be the set of weights with respect to $\rho_i$.

The homomorphism $\phi$ induces an isomorphism between $W(\prod G_i,\prod T_i)=\prod W(G_i,T_i)$ and $W(G,T)$; this uses that the kernel of $\phi$ is finite and contained inside the center of $\prod G_i$. 
Note that the restriction $\prod T_i\to T$ of $\phi$ induces an isomorphism $X(T)\otimes_\ZZ \QQ \isomto \prod X(T_i)\otimes_\ZZ \QQ$ and gives a bijection $\Omega(V)\isom \prod \Omega_i(V_i)$, for which the actions of $W(G,T)$ and $\prod W(G_i,T_i)$ are compatible. Hence, to show that the representation $\rho_V:G\to \GL_V$ is minuscule, i.e., $W(G,T)$ acts transitively on $\Omega(V)$, it suffices to show that $W(G_i,T_i)$ acts transitively on each $\Omega(V_i)$. 

When $i=0$, $V_0$ is one-dimensional since $G_0$ is a torus. So $W(G_0,T_0)$ acts transitively on  the one element set $\Omega(V_0)$. In particular, when $G=Z$, the theorem is true. 

Assume that $G\not=Z$ and consider $i>0$; note that the kernel of $\rho_i$ is either finite or $G_i$, since $G_i$ is almost simple. If $\ker(\rho_i)$ is $G_i$, then $W(G_i,T_i)$ acts transitively on the one element set $\Omega(V_i)$. 
For each $i$, let $\tilde{G_i}$ be the image of $G_i\injects \prod G_i \overset{\rho_V\circ\phi}{\to} \GL_V$. 
Let $I$ be the set of $i>0$ for which $\ker(\rho_i)$ is finite.  
Fix $i\in I$, we have an isogeny $\phi_i:G_i\to \tilde{G_i}$. The image $\tilde{T}_i$ of $T_i$ under $\phi_i$ is a maximal torus of $\tilde{G}_i$. The isogeny $\phi_i$ induces isomorphisms 
$X(\tilde{T}_i)\otimes_\ZZ \QQ\isomto X(T_i)\otimes_\ZZ \QQ$ and hence
an isomorphism $W(G_i,T_i)\isomto W(\tilde{G}_i,\tilde{T}_i)$.
 
The image of $\rho_V$ is a reductive group with almost direct product decomposition $\tilde{G}_0\cdot \prod_{i\in I}\tilde{G}_i$.
Moreover, the $W(G_i,T_i)$ and  $W(\tilde{G}_i,\tilde{T}_i)$ actions on $\Omega(V_i)$ are compatible with respect to these isomorphisms. Hence, to show that the representation $\rho_V:G\to \GL_V$ is minuscule, it suffices to show that $W(\tilde{G}_i,\tilde{T}_i)$ acts transitively on $\Omega(V_i)$ for each $i\in I$. 

By \cite[\S4]{MR1603865}, since $(G/\ker \rho_V\isom \tilde{G}_0\cdot \prod_{i\in I} \tilde{G}_i,\rho_V)$ is a weak Mumford-Tate pair,
we have $\tilde{G}_0= \mathbb{G}_m$ (i.e., the homotheties) and $(\tilde{G}_0\cdot \tilde{G}_i, \tilde{G}_0\cdot \tilde{G}_i\injects \GL_{V_0\otimes V_i})$ is a weak Mumford-Tate pair for each $i\in I$. 
In \cite[Table 4.2]{MR1603865}, Pink listed all the possibilities for $(\tilde{G}_i,\tilde{G}_i\injects \GL_{V_i})$ and in each case $\tilde{G}_i\injects \GL_{V_i}$ is a minuscule representation. This proves the theorem.
\end{proof}

\begin{remark}
A \textbf{strong Mumford-Tate pair} is a weak Mumford-Tate pair together with the extra condition that all the given cocharacters are contained in a single $\Aut(\overline{F}/F)$-orbit.
In \cite[\S3]{MR563476}, Serre focused on the proof of Theorem \ref{WMT} for strong Mumford-Tate pairs. However, in \cite{MR3351177}, Orr pointed out that this extra condition was not being used in the proof. This is also clear from our discussion above. (Note that Orr considered Mumford-Tate triples instead of Mumford-Tate pairs by making the cocharacter set explicit in his paper.) 
\end{remark}

Let $A$ be any abelian variety defined over a number field $K$.
For every prime $\ell$, let $\iota_{A,\ell}:G_{A,\ell}\injects \GL_{V_\ell(A)}$ be the tautological representation of the $\ell$-adic monodromy group.
On the other hand, the first $\ell$-adic \'{e}tale cohomology group $H:=H^1_{\text{\'{e}t}}(A_{\overline{K}},\QQ_\ell)$ of $A$ is isomorphic to the dual of $V_\ell(A)$. 
The $\Gal_K$-action on $H$ gives a continuous representation 
\[
\rho': \Gal_K \to \GL_H
\]
which is dual to the representation $\rho_{A,\ell}$. 
Let $\iota^{\vee}_{A,\ell}: G_{A,\ell}\injects \GL_H$ be the faithful representation induced by $\rho'$ and the duality. Note that $\rho'(\Gal_K)$ is Zariski dense in $\iota^{\vee}_{A,\ell}(G_{A,\ell})$.
Pink proved the following result in \cite[Theorem (5.10)]{MR1603865}, which will be a main ingredient in the proofs of Theorem \ref{mainthm} and Theorem \ref{seppower}.

\begin{theorem}[Pink]\label{Pink}
Let $A$ be an abelian variety defined over a number field $K$. Suppose $K_{A}^{\conn}=K$.
Then for every prime $\ell$, $(G_{A,\ell},\iota^{\vee}_{A,\ell})$ is a weak Mumford-Tate pair of weights $\{0,1\}$.
\end{theorem}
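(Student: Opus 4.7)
The plan is to bring in $p$-adic Hodge theory at the places of $K$ above $\ell$. Fix such a place $v$. By a theorem of Faltings, the restriction of the dual representation on $H := H^1_{\et}(A_{\overline{K}},\QQ_\ell)$ to the decomposition group $G_{K_v}$ is Hodge--Tate with weights in $\{0,1\}$: the Hodge--Tate decomposition provides a splitting
\[
H \otimes_{\QQ_\ell} \mathbb{C}_\ell = W_0 \oplus W_1,
\]
where $\mathbb{G}_m$ acts by the $i$-th power on $W_i$. This is the same data as a cocharacter $\mu_v : \mathbb{G}_{m,\mathbb{C}_\ell} \to \GL_{H_{\mathbb{C}_\ell}}$ whose weights on $H$ lie in $\{0,1\}$.

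Next I would show that $\mu_v$ in fact factors through $G_{A,\ell,\mathbb{C}_\ell}$. The essential input here is Sen's theorem: the Sen operator (the derivative of $\mu_v$ at the identity) lies in $\Lie(G_{A,\ell}) \otimes_{\QQ_\ell} \mathbb{C}_\ell$, and being semisimple with integer eigenvalues in $\{0,1\}$ it exponentiates to a cocharacter of $G_{A,\ell,\mathbb{C}_\ell}$ that agrees with $\mu_v$. Composing with $\iota^{\vee}_{A,\ell}$ thus produces a cocharacter whose weights on $H$ lie in $\{0,1\}$, verifying condition (ii) of the weak Mumford--Tate definition.

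The main obstacle is condition (i): the $G_{A,\ell}(\overline{\QQ_\ell})$-conjugates of the $\mu_v$, as $v$ ranges over places of $K$ above $\ell$, must generate $G_{A,\ell,\overline{\QQ_\ell}}$. Under the hypothesis $K_A^{\conn} = K$, Proposition \ref{KAconn} tells us that $G_{A,\ell}$ is already connected, so the target is $G_{A,\ell}^\circ = G_{A,\ell}$ itself. The decisive ingredient is a theorem of Bogomolov: the image $\rho_{A,\ell}(I_v)$ of the inertia subgroup at $v$ is open in $G_{A,\ell}(\QQ_\ell)$. Combined with Galois stability, this openness forces the Galois conjugates of the Sen operator to $\overline{\QQ_\ell}$-span all of $\Lie(G_{A,\ell}) \otimes \overline{\QQ_\ell}$, and transferring the statement from Lie algebras back to algebraic subgroups of the connected group $G_{A,\ell}$ yields the required generation. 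This interplay between inertia density, Sen operators, and algebraic-group generation is the technical heart of the argument, and the step I would expect to need the most care.
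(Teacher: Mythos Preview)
The paper does not prove this theorem; it simply cites Pink \cite[Theorem~(5.10)]{MR1603865}. Your sketch is in the right spirit and matches the overall architecture of Pink's actual argument: one produces Hodge--Tate cocharacters $\mu_v$ from the Hodge--Tate decomposition of $H^1_{\et}(A_{\overline K},\QQ_\ell)$ at places $v\mid\ell$, and Sen's theorem forces these to land in $G_{A,\ell}$, giving condition~(ii).

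However, your treatment of condition~(i) contains a genuine error. You attribute to Bogomolov the statement that the image of the \emph{inertia} subgroup $\rho_{A,\ell}(I_v)$ is open in $G_{A,\ell}(\QQ_\ell)$. This is false in general: already for an elliptic curve with good ordinary reduction at $v\mid\ell$, the inertia image sits inside a Borel and is far from open in $\GL_2(\QQ_\ell)$. Bogomolov's theorem \cite{MR574307} asserts openness of the \emph{global} image $\rho_{A,\ell}(\Gal_K)$, not of any local piece.

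The correct generation argument runs differently. Let $H\subseteq G_{A,\ell,\overline{\QQ}_\ell}$ be the smallest normal subgroup through which all the $\mu_v$ factor. On the quotient $G_{A,\ell}/H$ the induced representation has all Hodge--Tate weights equal to~$0$ at every $v\mid\ell$; Sen's theorem then forces the inertia image at each such $v$ to be \emph{finite}. Since the representation is also unramified at almost all other places, after a finite extension of $K$ it becomes everywhere unramified, and hence has finite image. But by Bogomolov the global image is open in $(G_{A,\ell}/H)(\QQ_\ell)$, so $G_{A,\ell}/H$ must be finite. As $G_{A,\ell}$ is connected by the hypothesis $K_A^{\conn}=K$, this gives $H=G_{A,\ell,\overline{\QQ}_\ell}$. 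The logic is thus the reverse of what you wrote: one uses that inertia becomes \emph{small} on the quotient, not large on the whole group.
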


\begin{proposition}\label{minuscule}
Let $A$ be an abelian variety defined over a number field $K$. Suppose that $K_{A}^{\conn}=K$. Then each irreducible representation $V\subseteq V_\ell(A)\otimes_{\QQ_\ell} \overline{\QQ}_\ell$ of $(G_{A,\ell})_{\overline{\QQ}_\ell}$ is minuscule.
\end{proposition}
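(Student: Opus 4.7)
The plan is to deduce Proposition \ref{minuscule} as a direct combination of Pink's theorem (Theorem \ref{Pink}) and the weak Mumford--Tate result (Theorem \ref{WMT}), with a small duality step at the end.

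First I would verify that $G_{A,\ell}$ satisfies the hypotheses of Theorem \ref{WMT}. Since $K_A^{\conn} = K$, Proposition \ref{KAconn} tells us that $G_{A,\ell}$ is connected. Reductivity of $G_{A,\ell} = G_{A,\ell}^\circ$ is given by Theorem \ref{bgAGell}(c). Thus $G_{A,\ell}$ is a connected reductive group over $\QQ_\ell$ with a faithful representation into $\GL_H$, where $H = H^1_{\et}(A_{\overline{K}}, \QQ_\ell)$, via $\iota^{\vee}_{A,\ell}$.

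Next I would invoke Theorem \ref{Pink}, which asserts that $(G_{A,\ell}, \iota^{\vee}_{A,\ell})$ is a weak Mumford--Tate pair of weights $\{0,1\}$. Applying Theorem \ref{WMT} to this pair, with $F = \QQ_\ell$ and $U = H$, I conclude that each irreducible subrepresentation $W$ of $H \otimes_{\QQ_\ell} \overline{\QQ}_\ell$ under $(G_{A,\ell})_{\overline{\QQ}_\ell}$ is minuscule.

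The final step is to pass from $H$ to $V_\ell(A)$ by duality. Recall that $H$ is naturally isomorphic to the dual of $V_\ell(A)$ as $G_{A,\ell}$-representations. Given an irreducible subrepresentation $V \subseteq V_\ell(A) \otimes_{\QQ_\ell} \overline{\QQ}_\ell$, its dual $V^{\vee}$ is an irreducible subrepresentation of $H \otimes_{\QQ_\ell} \overline{\QQ}_\ell$, so is minuscule by the previous step. For a maximal torus $T \subseteq G_{A,\ell}$, the weights satisfy $\Omega(V^{\vee}) = -\Omega(V)$, and since the Weyl group $W(G_{A,\ell},T)$ acts $\ZZ$-linearly on $X(T)$, negation commutes with the Weyl group action: $w \cdot (-\alpha) = -(w \cdot \alpha)$. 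Hence $W(G_{A,\ell},T)$ acts transitively on $\Omega(V^{\vee})$ if and only if it acts transitively on $\Omega(V)$, so $V$ is also minuscule. I do not anticipate any real obstacle here; the entire content has been established in Theorem \ref{WMT} and Theorem \ref{Pink}, and the only thing to check is the harmless duality remark.
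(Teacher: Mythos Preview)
Your proof is correct and follows essentially the same route as the paper: invoke Pink's theorem to get a weak Mumford--Tate pair, apply Theorem \ref{WMT} to conclude the irreducible constituents of $H\otimes_{\QQ_\ell}\overline{\QQ}_\ell$ are minuscule, and then pass to $V_\ell(A)$ by duality. The paper compresses the duality step into a single clause, whereas you spell out the Weyl-group argument; one small point you could make explicit is that $V^{\vee}$ is a priori a \emph{quotient} of $H\otimes_{\QQ_\ell}\overline{\QQ}_\ell$, but semisimplicity (from reductivity of $G_{A,\ell}$ in characteristic zero) ensures it also occurs as a subrepresentation.
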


\begin{proof}
By Theorem \ref{Pink}, $(G_{A,\ell},\iota^{\vee}_{A,\ell})$ is a weak Mumford-Tate pair of weights $\{0,1\}$ over $\QQ_\ell$.
By Theorem \ref{WMT}, 
each irreducible 
component of the dual representation $\iota^{\vee}_{A,\ell}:(G_{A,\ell})_{\overline{\QQ}_\ell}\injects \GL_{H\otimes_{\QQ_\ell} \overline{\QQ}_\ell}$ is minuscule and therefore the same also holds for $\iota_{A,\ell}$. 
\end{proof}

\section{Radicals of Frobenius polynomials}\label{firstpart}
Let $A$ and $A'$ be non-zero abelian varieties defined over a number field $K$ of dimensions $g$ and $g'$ respectively. 
We assume throughout the section that the $\ell$-adic monodromy groups $G_{A\times A',\ell}$ are connected, i.e., $K_{A\times A'}^\conn = K$. 

Let $S$ be a set of prime ideals of $\mathcal{O}_K$ of density $1$ for which $A$ and $A'$ have good reduction. Suppose there is a density $1$ set $\Lambda$ of rational primes such that $\rad_\Lambda|A(\FF_\p)|$ divides $\rad_\Lambda|A'(\FF_\p)|$ for all $\p\in S$. 
The goal of this section is to build up tools for proving the following result, which will be proved in \S\ref{pfofonepttwo}.

\begin{proposition}\label{onepttwo}
The polynomial $\rad P_{A,\p}(x)$ divides $\rad P_{A',\p}(x)$ for all prime $\p\in S$.
\end{proposition}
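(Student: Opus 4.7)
The plan is to work mod $\ell$ for each prime $\ell$ in the density-one set $\Lambda$, convert the arithmetic divisibility into an algebraic condition on the mod-$\ell$ monodromy group $H_{A\times A',\ell}$ by combining Chebotarev with the bounded-index bound of Theorem \ref{bgHell}(a), strengthen this by using the homothety subgroup to replace ``eigenvalue $1$ survives'' with ``every eigenvalue survives,'' and finally lift the resulting mod-$\ell$ divisibility to $\ZZ[x]$ by letting $\ell$ vary within $\Lambda$.

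First I would fix a sufficiently large prime $\ell \in \Lambda$ (so that Proposition \ref{reductive} and Theorem \ref{bgHell} apply). Using $P_{A,\p}(1) = |A(\FF_\p)|$ and the congruence $P_{A,\p}(x) \equiv \det(xI - \overline{\rho}_{A,\ell}(\Frob_\p)) \pmod \ell$, the hypothesis says: for every $\p \in S$ with $\p \nmid \ell$, if $1$ is an eigenvalue of $\overline{\rho}_{A,\ell}(\Frob_\p)$ then $1$ is an eigenvalue of $\overline{\rho}_{A',\ell}(\Frob_\p)$. Since $S$ has density one, Chebotarev and conjugation-invariance extend this implication to every $g$ in the image $\overline{\rho}_{A\times A',\ell}(\Gal_K)$. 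Combining Theorem \ref{bgHell}(a) with the connectedness of $H_{A\times A',\ell}$ and a Lang-Weil style dimension argument, one extends the implication (for $\ell$ large) to all $g \in H_{A\times A',\ell}(\overline{\FF}_\ell)$; equivalently, $Z_A \subseteq Z_{A'}$ for the closed subvarieties $Z_A := \{g : \det(I - g|_{A[\ell]}) = 0\}$ and $Z_{A'}$ of $H_{A\times A',\ell}$. Now the homothety trick applies: by Theorem \ref{bgHell}(b), $\mathbb{G}_m \hookrightarrow H_{A\times A',\ell}$ acts by the same scalar on $A[\ell]$ and $A'[\ell]$, so for any semisimple $g \in H_{A\times A',\ell}(\overline{\FF}_\ell)$ and any eigenvalue $\lambda \in \overline{\FF}_\ell^\times$ of $g|_{A[\ell]}$, the element $\lambda^{-1} g \in H_{A\times A',\ell}(\overline{\FF}_\ell)$ has $1$ as an $A$-eigenvalue, hence as an $A'$-eigenvalue, and so $\lambda$ is an $A'$-eigenvalue of $g$. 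Specializing to $g = \overline{\rho}_{A\times A',\ell}(\Frob_\p)$ for $\p \in S$ with $\p \nmid \ell$ yields: every root of $\overline{P}_{A,\p}(x)$ in $\overline{\FF}_\ell$ is a root of $\overline{P}_{A',\p}(x)$.

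Next I would lift the conclusion to $\ZZ[x]$. Fix $\p \in S$ and suppose for contradiction that some root $\alpha \in \overline{\QQ}$ of $P_{A,\p}$ satisfies $\beta := P_{A',\p}(\alpha) \neq 0$; then $\beta$ is a nonzero algebraic integer in some number field $F$. Because $\Lambda$ is density one, arbitrarily large primes $\ell \in \Lambda$ exist that are coprime to $N_{F/\QQ}(\beta)$ and to the residue characteristic of $\p$, and large enough for the previous step to apply. Reducing $\alpha$ modulo a prime of $F$ above such $\ell$ produces $\overline{\alpha} \in \overline{\FF}_\ell$ with $\overline{P}_{A,\p}(\overline{\alpha}) = 0$ yet $\overline{P}_{A',\p}(\overline{\alpha}) \neq 0$, contradicting the mod-$\ell$ conclusion. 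Hence $\rad P_{A,\p}$ divides $\rad P_{A',\p}$ in $\ZZ[x]$ for every $\p \in S$.

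I expect the hard part to be the extension step in the second paragraph: passing from the eigenvalue implication on the finite subgroup $\overline{\rho}_{A\times A',\ell}(\Gal_K) \subseteq H_{A\times A',\ell}(\FF_\ell)$ to the inclusion $Z_A \subseteq Z_{A'}$ of closed subvarieties of $H_{A\times A',\ell}$ over $\overline{\FF}_\ell$. The condition ``$g \in Z_A \Rightarrow g \in Z_{A'}$'' is not the vanishing of a single regular function, so naive Zariski density of the image is insufficient; one must combine the bounded-index bound (Theorem \ref{bgHell}(a)), Lang-Weil estimates on the irreducible components of $Z_A$, and the closure of $\overline{\rho}_{A\times A',\ell}(\Gal_K)$ under multiplication by $\FF_\ell^\times$-homotheties (which forces the image to meet each such component ``evenly enough'') to force the desired containment of closed subvarieties.
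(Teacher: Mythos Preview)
Your outline is correct and matches the paper's approach closely: Chebotarev to pass to the mod-$\ell$ image, an extension step to the closed inclusion $\mathcal V_\ell\subseteq\mathcal V'_\ell$ inside $H_{A\times A',\ell}$, the homothety trick to upgrade ``$1$ is an eigenvalue'' to ``every eigenvalue,'' and finally the lift to $\ZZ[x]$ (your last paragraph is exactly the paper's Lemma~\ref{dividepoly}).

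Where your sketch of the hard part diverges from the paper is in the mechanics of the extension step. The paper does \emph{not} argue directly on $H_{A\times A',\ell}$; it first reduces to a split maximal torus $\tT_\ell$ via the Jordan decomposition (Lemma~\ref{reduction}), which is what makes the Lang--Weil bookkeeping uniform in $\ell$: on $\tT_\ell$ the relevant varieties are cut out by equations $x_i=1$ inside a torus whose defining exponent vectors are bounded independently of $\ell$ (Lemma~\ref{threeptfour}), and this is where the bounded-complexity input for Lang--Weil comes from. Second, the lower bound on $|C(\FF_\ell)\cap\overline\rho(\Gal_K)|$ is obtained not via homotheties but via the $m$-th power map on the torus: with $m=[H_\ell(\FF_\ell):\overline\rho(\Gal_K)]\le M_{A\times A'}$ one has $C_m(\FF_\ell)\xrightarrow{g\mapsto g^m} C(\FF_\ell)\cap\overline\rho(\Gal_K)$ with fibers of bounded size, and Lang--Weil on $C_m$ gives the needed $\gg\ell^{\dim C}$ points. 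Third, ``sufficiently large $\ell\in\Lambda$'' is not enough; the paper restricts to a positive-density subset $\Lambda_0\subseteq\Lambda$ of primes splitting completely in a fixed number field so that $H_\ell$ is split and the components of the $C_m$ are absolutely irreducible over $\FF_\ell$ (this uses Theorem~\ref{bgHell}(c) and the density-one hypothesis on $\Lambda$). Your instinct that bounded index plus Lang--Weil is the engine is right, but the torus reduction and the $m$-th power map are the devices that make the constants uniform; the homothety idea you float for this step does not by itself give the lower bound.
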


\subsection{Setup}
For each prime $\ell$, we define $H_\ell:=(\mathcal{G}_{A\times A',\ell})_{\FF_\ell}$ as in \S\ref{background}. By Proposition \ref{reductive} and the assumption $K_{A\times A'}^\conn=K$, the group
$H_\ell$ is connected when $\ell$ is sufficiently large. 
Recall that we have Galois representations $\overline{\rho}_{A\times A',\ell}: \Gal_K\to H_\ell(\FF_\ell)$.
We can identify $H_\ell$ with a closed algebraic subgroup of $(\mathcal{G}_{A,\ell})_{\FF_\ell}\times (\mathcal{G}_{A',\ell})_{\FF_\ell}$. 

\begin{lemma}\label{threeptthree}
For every $\ell\in \Lambda$ and $(B,B')\in \overline{\rho}_{A\times A',\ell}(\Gal_K)$, if $\det(I-B)=0$, then $\det(I-B')=0$.  
\end{lemma}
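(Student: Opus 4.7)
The plan is to translate the divisibility hypothesis into an explicit statement about the mod $\ell$ Galois representations and then use Chebotarev density to upgrade a statement about Frobenius elements to a statement about every element of the image.

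Fix $\ell\in\Lambda$. For any prime $\p\in S$ with $\p\nmid\ell$, the representation $\overline{\rho}_{A\times A',\ell}$ is unramified at $\p$, and the key identities
\[
|A(\FF_\p)|=P_{A,\p}(1)\equiv \det\bigl(I-\overline{\rho}_{A,\ell}(\Frob_\p)\bigr)\pmod{\ell},
\]
and the analogous identity for $A'$, tell us exactly when $\ell$ divides $|A(\FF_\p)|$ and $|A'(\FF_\p)|$. Since $\ell\in\Lambda$, having $\ell \mid |A(\FF_\p)|$ is equivalent to having $\ell\mid \rad_\Lambda|A(\FF_\p)|$, and similarly for $A'$. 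Thus the hypothesis $\rad_\Lambda|A(\FF_\p)|\mid \rad_\Lambda|A'(\FF_\p)|$ implies that for every $\p\in S$ with $\p\nmid\ell$,
\[
\det\bigl(I-\overline{\rho}_{A,\ell}(\Frob_\p)\bigr)=0 \ \Longrightarrow\ \det\bigl(I-\overline{\rho}_{A',\ell}(\Frob_\p)\bigr)=0.
\]

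Next I would invoke the Chebotarev density theorem. The image $\overline{\rho}_{A\times A',\ell}(\Gal_K)$ is a finite group; let $L/K$ be the corresponding finite Galois extension. Fix any element $(B,B')$ in this image. By Chebotarev, the set of primes $\p$ of $K$ (unramified in $L$) whose Frobenius class equals the conjugacy class of $(B,B')$ has positive density. Since $S$ has density one and the set of primes $\p\mid\ell$ is finite (hence density zero), we may choose such a $\p\in S$ with $\p\nmid\ell$. Then $\overline{\rho}_{A\times A',\ell}(\Frob_\p)$ is conjugate to $(B,B')$, so there exist $(g,g')\in \GL_{A[\ell]}(\overline{\FF}_\ell)\times\GL_{A'[\ell]}(\overline{\FF}_\ell)$ with $\overline{\rho}_{A,\ell}(\Frob_\p)=gBg^{-1}$ and $\overline{\rho}_{A',\ell}(\Frob_\p)=g'B'(g')^{-1}$.

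Conjugation preserves the characteristic polynomial, so $\det(I-B)=\det(I-\overline{\rho}_{A,\ell}(\Frob_\p))$ and likewise for $B'$. Hence if $\det(I-B)=0$, then $\det(I-\overline{\rho}_{A,\ell}(\Frob_\p))=0$; applying the implication derived in the first step yields $\det(I-\overline{\rho}_{A',\ell}(\Frob_\p))=0$, which is the same as $\det(I-B')=0$. The only step requiring any real care is ensuring that Chebotarev can be applied after removing the density-zero sets of primes (those outside $S$, those dividing $\ell$, and those where $L/K$ ramifies), but that is standard. This completes the proof sketch.
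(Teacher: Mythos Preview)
Your proof is correct and follows essentially the same approach as the paper: use Chebotarev to realize an arbitrary element $(B,B')$ of the image as (a conjugate of) a Frobenius at some $\p\in S$ with $\p\nmid\ell$, and then translate the divisibility hypothesis on $\rad_\Lambda$ into the desired implication on determinants via $|A(\FF_\p)|=P_{A,\p}(1)\equiv\det(I-\overline{\rho}_{A,\ell}(\Frob_\p))\pmod{\ell}$. The only cosmetic difference is the order of exposition; the paper picks the prime first and then computes, whereas you first derive the implication for Frobenius elements and then invoke Chebotarev.
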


\begin{proof}
Take any $(B,B')\in \overline{\rho}_{A\times A',\ell}(\Gal_K)$. By the Chebotarev density theorem, there exists a prime $\p\in S$ with $\p\nmid \ell$ such that $\overline{\rho}_{A\times A',\ell}(\Frob_\p)=(\overline{\rho}_{A,\ell}(\Frob_\p),\overline{\rho}_{A',\ell}(\Frob_\p))$ is conjugate to $(B,B')$ in $H_\ell(\FF_\ell)$. Therefore, $\det(I-B)=0$ if and only if $\det(I-\overline{\rho}_{A,\ell}(\Frob_\p))=0$. Since 
\[
\det(I-\overline{\rho}_{A,\ell}(\Frob_\p))\equiv P_{A,\p}(1)\equiv |A(\FF_\p)| \pmod{\ell},
\]
we find that $\det(I-B)=0$ if and only if $\ell$ divides $|A(\FF_\p)|$.
Similarly, $\det(I-B')=0$ if and only if $\ell$ divides $|A'(\FF_\p)|$. 
The lemma follows from the assumption that $\rad_\Lambda |A(\FF_\p)|$ divides $\rad_\Lambda |A'(\FF_\p)|$ for all $\p\in S$, i.e.,
for all $\ell\in \Lambda$ and $\p\in S$, if $|A(\FF_\p)|$ is divisible by $\ell$, then so is $|A'(\FF_\p)|$.
\end{proof}

Define 
\[
\vV_{\ell}:=\{(B,B')\in H_\ell:\ \det(I-B)=0\}
\]
and 
\[
\vV'_{\ell}:=\{(B,B')\in H_\ell:\ \det(I-B')=0\};
\]
they are closed subvarieties of $H_\ell$ defined over $\FF_\ell$.  
The above definitions were motivated by Lemma \ref{threeptthree}, which says that 
\begin{equation}\label{heart}
\vV_\ell(\FF_\ell)\cap \overline{\rho}_{A\times A',\ell}(\Gal_K)\subseteq 
\vV'_\ell(\FF_\ell)\cap \overline{\rho}_{A\times A',\ell}(\Gal_K).
\end{equation}

We will first prove the following proposition in \S\ref{pfofthreeptone}; it will be a key ingredient in our proof of Proposition \ref{onepttwo} in \S\ref{pfofonepttwo}.

\begin{proposition}\label{threeptone}
We have $\vV_\ell\subseteq \vV'_\ell$ for infinitely many $\ell\in \Lambda$. 
\end{proposition}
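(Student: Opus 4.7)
Plan: I would prove the proposition by exhibiting infinitely many $\ell\in\Lambda$ for which the pointwise containment \eqref{heart} of Lemma \ref{threeptthree} can be promoted to the variety-level inclusion. The main idea is to reduce to a split maximal torus of $H_\ell$ using conjugation-invariance and Jordan decomposition, then exploit that the Galois image is dense enough there to propagate the containment via a Lang-Weil argument.

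By Theorem \ref{bgHell}(a)--(c), the density-$1$ hypothesis on $\Lambda$, and Dirichlet's theorem on primes in arithmetic progressions, I would first select an infinite subset of $\ell\in\Lambda$ for which (i) $H_\ell$ is connected, split reductive, containing the central $\mathbb{G}_m$ of homotheties, (ii) $[H_\ell(\FF_\ell):N_\ell]\le M:=M_{A\times A'}$ with $N_\ell:=\overline{\rho}_{A\times A',\ell}(\Gal_K)$, and (iii) $\gcd(\ell-1,M!)=1$. For such $\ell$ and a split maximal torus $T\subseteq H_\ell$, the intersection $T(\FF_\ell)\cap N_\ell$ has index at most $M$ in $T(\FF_\ell)\cong(\FF_\ell^\times)^r$ (by coset analysis of $N_\ell$ in $H_\ell(\FF_\ell)$); but condition (iii) forces $(\FF_\ell^\times)^r$ to admit no proper subgroup of index $\le M$, so $T(\FF_\ell)\subseteq N_\ell$.

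Applying Lemma \ref{threeptthree} to torus elements then yields $(\vV_\ell\cap T)(\FF_\ell)\subseteq\vV'_\ell(\FF_\ell)$. Using the weight descriptions
\[
\vV_\ell\cap T=\left\{t\in T:\prod_{\alpha\in\Omega_A}(1-\alpha(t))^{m_\alpha}=0\right\}
\]
and analogously for $\vV'_\ell\cap T$ with the weights $\Omega_{A'}$ of $A'[\ell]$, Lang-Weil estimates for each $\FF_\ell$-irreducible component of $\vV_\ell\cap T$ promote the $\FF_\ell$-point containment to the inclusion $\vV_\ell\cap T\subseteq\vV'_\ell\cap T$ of subvarieties, with Galois equivariance of $\vV'_\ell$ bridging the gap between geometric and $\FF_\ell$-irreducible components. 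Finally, conjugation-invariance of $\vV_\ell$ and $\vV'_\ell$ under the connected reductive group $H_\ell$, combined with the Jordan decomposition in $H_\ell(\overline{\FF}_\ell)$ (the semisimple part $B_s$ of any $B\in\vV_\ell$ lies in $\vV_\ell$ since eigenvalues are preserved, and is $H_\ell(\overline{\FF}_\ell)$-conjugate to a torus element), lifts the torus inclusion to the full containment $\vV_\ell\subseteq\vV'_\ell$.

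The main obstacle is the Zariski density step on the torus. The geometric components of $\vV_\ell\cap T$ arising from non-primitive weights $\alpha=d\alpha_0\in\Omega_A$ are cosets $\{t:\alpha_0(t)=\zeta\}$ for $\zeta\in\mu_d$; under condition (iii) only the identity coset ($\zeta=1$) is $\FF_\ell$-defined, and the remaining Frobenius-orbit components carry only $O(\ell^{r-2})$ points over $\FF_\ell$ — not enough for a naive Lang-Weil density. Handling these non-identity cosets requires either verifying (via the minuscule structure of \S\ref{bgminuscule}) that all weights appearing in $\Omega_A$ are primitive, so these cosets do not arise, or extracting extra Frobenius data via Galois equivariance of $\vV'_\ell$ combined with a suitable congruence on $\ell$.
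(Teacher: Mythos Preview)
Your proposal has a fatal flaw in condition (iii): the requirement $\gcd(\ell-1,M!)=1$ forces $2\nmid \ell-1$, i.e., $\ell$ is even, so $\ell=2$ is the only prime satisfying it (assuming $M\ge 2$, which is the generic situation). Dirichlet's theorem cannot help here, and consequently you cannot select infinitely many such $\ell\in\Lambda$. The step ``$(\FF_\ell^\times)^r$ admits no proper subgroup of index $\le M$'' is therefore unavailable, and the conclusion $T(\FF_\ell)\subseteq N_\ell$ fails in general. Without this, you have no reason to believe that $N_\ell$ meets each irreducible component $C$ of $\vV_\ell\cap T$ in enough $\FF_\ell$-points to run any Lang--Weil density argument.

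The paper circumvents this obstruction not by forcing $T(\FF_\ell)\subseteq N_\ell$, but by using the $m_\ell$-th power map: set $m_\ell=[H_\ell(\FF_\ell):N_\ell]\le M$ and observe that $g\mapsto g^{m_\ell}$ carries $C_{m_\ell}(\FF_\ell)$ into $C(\FF_\ell)\cap N_\ell$, where $C_{m_\ell}:=\{x\in T:x^{m_\ell}\in C\}$ has the same dimension as $C$. A Lang--Weil lower bound on $|C_{m_\ell}(\FF_\ell)|$ (valid once the components of $C_{m_\ell}$ are arranged to be absolutely irreducible, which the paper achieves by restricting to primes splitting completely in a fixed number field $F$) then gives $|C(\FF_\ell)\cap N_\ell|\gg\ell^{\dim C}$. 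Combined with the upper bound coming from \eqref{heart} and $\dim(C\cap\vV'_\ell)<\dim C$, this yields the desired contradiction. A second point you gloss over is that the Lang--Weil implicit constants must be uniform in $\ell$; the paper secures this via explicit bounded-complexity statements for the families $\{C_m\}$ and $\{C\cap\vV'_\ell\}$ (Lemma~\ref{threeptfour}), which your sketch does not address. The obstacle you flag at the end (non-primitive weights and their $\mu_d$-cosets) is a genuine subtlety in the Lang--Weil step, but it is handled in the paper precisely by the absolute-irreducibility arrangement for $C_{m_\ell}$, not by any appeal to minuscule structure.
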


The following lemma says the varieties $\vV_\ell\cap \tT_\ell$ and $\vV'_\ell\cap \tT_\ell$, with $\tT_\ell$ a maximal torus of $H_\ell$, carry enough information to prove Proposition \ref{threeptone}. 

\begin{lemma}\label{reduction}
Take any $\ell\in \Lambda$ such that $H_\ell$ is reductive. Let $\tT_\ell$ be a maximal torus of $H_\ell$. If $\vV_\ell\cap \tT_\ell\subseteq \vV'_\ell \cap \tT_\ell$, then $\vV_\ell \subseteq \vV'_\ell$.
\end{lemma}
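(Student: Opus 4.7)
My plan is to exploit two standard facts about connected reductive groups: (i) the characteristic polynomial of a matrix depends only on its semisimple part, and (ii) every semisimple element of a connected reductive group over an algebraically closed field is conjugate to an element of any fixed maximal torus. Together these should reduce membership in $\vV_\ell$ or $\vV'_\ell$ to membership in their intersections with $\tT_\ell$.

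First I would observe that $\vV_\ell$ and $\vV'_\ell$ are closed subvarieties of $H_\ell$ that are \emph{invariant under conjugation by $H_\ell$}. Indeed, if $(B,B')\in H_\ell$ and $(h_1,h_2)\in H_\ell\subseteq (\mathcal{G}_{A,\ell})_{\FF_\ell}\times (\mathcal{G}_{A',\ell})_{\FF_\ell}$, then conjugation in the product group acts componentwise, so conjugating $(B,B')$ replaces $B$ by $h_1 B h_1^{-1}$, which has the same characteristic polynomial. Since the two defining conditions $\det(I-B)=0$ and $\det(I-B')=0$ are conjugation invariant on each factor, $\vV_\ell$ and $\vV'_\ell$ are conjugation stable in $H_\ell$.

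Next, I would use Jordan decomposition: since $H_\ell$ is an algebraic group and the projections $\pi\colon H_\ell\to (\mathcal{G}_{A,\ell})_{\FF_\ell}$ and $\pi'\colon H_\ell\to (\mathcal{G}_{A',\ell})_{\FF_\ell}$ are homomorphisms, they preserve the Jordan decomposition. For any $g\in H_\ell(\overline{\FF}_\ell)$ with decomposition $g = g_s g_u$, the matrix $\pi(g_s)$ is the semisimple part of $\pi(g)$ and hence has the same eigenvalues as $\pi(g)$. In particular
\[
\det(I-\pi(g))=0 \quad\Longleftrightarrow\quad \det(I-\pi(g_s))=0,
\]
so $g\in\vV_\ell$ iff $g_s\in\vV_\ell$, and analogously for $\vV'_\ell$.

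Since $H_\ell$ is connected reductive, every semisimple element of $H_\ell(\overline{\FF}_\ell)$ lies in some maximal torus, and any two maximal tori of $H_{\ell,\overline{\FF}_\ell}$ are conjugate under $H_\ell(\overline{\FF}_\ell)$. Thus there exists $h\in H_\ell(\overline{\FF}_\ell)$ with $t:=h g_s h^{-1}\in \tT_\ell(\overline{\FF}_\ell)$. By the conjugation-invariance of $\vV_\ell$ and $\vV'_\ell$ and the eigenvalue reduction above,
\[
g\in \vV_\ell \iff g_s\in \vV_\ell \iff t\in \vV_\ell\cap \tT_\ell,
\]
and the same equivalences hold with $\vV'_\ell$ in place of $\vV_\ell$. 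Hence if $\vV_\ell\cap \tT_\ell\subseteq \vV'_\ell\cap \tT_\ell$, then $g\in \vV_\ell$ forces $t\in \vV'_\ell\cap \tT_\ell$ and therefore $g\in \vV'_\ell$. This proves $\vV_\ell(\overline{\FF}_\ell)\subseteq \vV'_\ell(\overline{\FF}_\ell)$, which gives the desired inclusion of closed subvarieties. The only mildly delicate point is invoking the compatibility of Jordan decomposition with morphisms of linear algebraic groups; but this is standard, and once in place the argument is a formal consequence of reductiveness of $H_\ell$.
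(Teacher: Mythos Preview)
Your argument is correct and follows essentially the same route as the paper: reduce to the semisimple part via Jordan decomposition (using that $\det(xI-\cdot)$ depends only on eigenvalues), conjugate that semisimple element into the fixed maximal torus $\tT_\ell$, and then apply the hypothesis. The paper's proof is slightly terser but uses exactly the same three steps; your extra remarks on conjugation-invariance and on compatibility of Jordan decomposition with the projections just make explicit what the paper leaves implicit.
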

\begin{proof}
Take any $(B,B')\in \vV_\ell(\overline{\FF}_\ell)$; we have $\det(I-B)=0$.
By the multiplicative Jordan decomposition, $(B,B')\in H_\ell({\overline{\FF}_\ell})$ can be expressed uniquely in the form $(B_s,B'_s)(B_u,B'_u)$ with commuting $(B_s,B'_s)$ and $(B_u,B'_u)\in H_\ell({\overline{\FF}_\ell})$ such that $(B_s,B'_s)$ is semisimple and $(B_u,B'_u)$ is unipotent. In $H_\ell(\overline{\FF}_\ell)$, $(B_s,B'_s)$ is conjugate to some element $(C_s,C'_s)$ of $\tT_\ell({\overline{\FF}_\ell})$. Note that we have
\[
\det(I-C_s)=\det(I-B_s)=\det(I-B)=0
\] 
and so $(C_s,C'_s)\in \vV_\ell\cap \tT_\ell$. By our assumption that $\vV_\ell\cap \tT_\ell\subseteq \vV'_\ell \cap \tT_\ell$, we have $(C_s,C'_s)\in \vV'_\ell\cap \tT_\ell$ and so 
\[
\det(I-B')=\det(I-B'_s)=\det(I-C'_s)=0.
\]
Hence, $(B,B')\in \vV'_\ell(\overline{\FF}_\ell)$.
Since $(B,B')$ is arbitrary, we have $\vV_\ell(\overline{\FF}_\ell)\subseteq\vV'_\ell(\overline{\FF}_\ell)$ and hence $\vV_\ell\subseteq \vV'_\ell$. 
\end{proof}

\subsection{Strategy}\label{strategies}
We will briefly give some ideas behind the proof of Proposition \ref{threeptone}. We will not use this section later. 
  
For $\ell\in \Lambda$, let $\tT_\ell$ be a maximal torus of $H_\ell$. By Lemma \ref{reduction}, it suffices to prove that $\vV_\ell\cap \tT_\ell \subseteq \vV'_\ell\cap \tT_\ell$. 

Take any irreducible component $C$ of $\vV_\ell\cap \tT_\ell$. We want to show that $C\subseteq \vV'_\ell\cap \tT_\ell$, this will imply $\vV_\ell\cap \tT_\ell \subseteq \vV'_\ell\cap \tT_\ell$ since $C$ is arbitrary. 
Suppose on the contrary that $C\not\subseteq \vV'_\ell\cap C$, since $C$ is irreducible, $\dim(\vV'_\ell\cap C)<\dim(C)$. The main idea is to study the set
\[
\Gamma_\ell:=C(\FF_\ell)\cap \overline{\rho}_{A\times A',\ell}(\Gal_K).
\]
and try to bound the cardinality $\gamma_\ell:=|\Gamma_\ell|$
from below and above and to hope for a contradiction for well-chosen primes $\ell\in \Lambda$ and tori $\tT_\ell$. 

\begin{enumerate}

\item   
Theorem \ref{bgHell}(a) says that the index of $\overline{\rho}_{A\times A',\ell}(\Gal_K)$ in $H_\ell(\FF_\ell)$ is bounded independent of $\ell$. So one might expect $\gamma_\ell$ to be roughly of size $|C(\FF_\ell)|$. Then by an application of the Weil conjectures, one would expect that $|C(\FF_\ell)|$ is roughly equal to $\ell^{\dim(C)}$, assuming $C$ is absolutely irreducible. 
Hence, $\gamma_\ell\gg \ell^{\dim(C)}$ and this gives a lower bound of $\gamma_\ell$ with a constant yet to be controlled.

\item 
By equation (\ref{heart}), we have
\[
\Gamma_\ell 
\subseteq (C\cap \vV'_\ell)(\FF_\ell)\cap \overline{\rho}_{A\times A',\ell}(\Gal_K)
\subseteq (C\cap \vV'_\ell)(\FF_\ell).
\]

Then again from the Weil conjectures one would expect that $|(C\cap \vV'_\ell)(\FF_\ell)|$ is $O(\ell^{\dim(C\cap \vV'_\ell)})$. 
Hence, $\gamma_\ell\ll \ell^{\dim(C\cap \vV'_\ell)}\leq \ell^{\dim(C)-1}$ 
and this gives a upper bound of $\gamma_\ell$ with a constant yet to be controlled.
\end{enumerate}

We need to ensure that the implicit constants of $(1)$ and $(2)$ do not depend on $\ell$;
we then have $\ell^{\dim C}\ll \ell^{\dim C-1}$ where the error term is independent of $\ell$. 
This would then give a contradiction for $\ell$ large enough. 
We will restrict our attention to $\ell$ in an infinite subset $\Lambda_0\subseteq \Lambda$ constructed in \S\ref{Lambda0}.

\subsection{The set $\Lambda_0$}\label{Lambda0}
Suppose $\ell$ is a prime for which $H_\ell$ is reductive and split. Choose a split maximal torus $\tT_\ell\subseteq H_\ell$.

By choosing a basis for $(A\times A')[\ell]$, we can identify $H_\ell$ with an algebraic subgroup of $\GL_{2g+2g',\FF_\ell}$ and we may assume that $\tT_\ell$ lies in the diagonal.
We have identified $\tT_\ell$ with a closed subgroup of the diagonal which we identify with $\mathbb{G}_m^{2g+2g'}$; the diagonal of $\GL_{2g+2g'}$. 

For each $1\leq i\leq 2g$, we define $Z_{\ell,i}$ to be the algebraic subgroup $\tT_\ell\cap\{x_i=1\}$ of $\tT_\ell$. Note that 
\[
\vV_\ell\cap \tT_\ell=\bigcup_{i=1}^{2g} Z_{\ell,i}.
\] 

Let $C$ be any irreducible component (defined over $\FF_\ell$) of $Z_{\ell,i}$. 
Theorem \ref{bgHell}(a) says that the index $[H_\ell(\FF_\ell):\overline{\rho}_{A\times A',\ell}(\Gal_K)]$ is bounded by a number $M_{A\times A'}$ which does not depend on $\ell$. 
For each positive integer $m\leq M_{A\times A'}$, define the subvariety 
$C_m:=\{x\in \tT_\ell:\ x^m\in C\}$ of $\tT_\ell$; note that $\dim C_m=\dim C$. 
 
\begin{definition}
Let $\{V_i\}_{i\in I}$ be a collection of affine varieties with $V_i$ defined over a finite field $\FF_{\ell_i}$. We say that $\{V_i\}_{i\in I}$ has \textbf{bounded complexity} if $V_i$ is isomorphic to a closed subvariety of $\mathbb{A}_{\FF_{\ell_i}}^n$ defined by the simultaneous vanishing of $r$ polynomials in $\FF_{\ell_i}[x_1,\cdots,x_n]$ each of degree at most $D$, where the integers $n,r$ and $D$ can be bounded independent of $i\in I$.  
\end{definition}

\begin{lemma}\label{threeptfour}
There is a positive density subset $\Lambda_0\subseteq \Lambda$ such that the following hold:
\begin{itemize}
\item $H_\ell$ is reductive and split for all $\ell\in \Lambda_0$.
\item For each prime $\ell\in \Lambda_0$, irreducible component $C$ of $Z_{\ell,i}\ (1\leq i \leq 2g)$ and positive integer $m\leq M_{A\times A'}$, the irreducible components of $C_m$ are absolutely irreducible. 
\item The set of varieties $\{C\cap \vV'_\ell\}_{\ell,C}$ has bounded complexity 
with $\ell\in \Lambda_0$ and $C$ ranging over the irreducible components of $Z_{\ell,i}\ (1\leq i \leq 2g)$.
\item The set of varieties $\{C_m\}_{\ell,C,m}$ has bounded complexity with $\ell\in \Lambda_0$, $C$ ranging over the irreducible components of $Z_{\ell,i}\ (1\leq i \leq 2g)$ and $m\leq  M_{A\times A'}$.
\end{itemize}
\end{lemma}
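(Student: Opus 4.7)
The plan is to build $\Lambda_0$ by imposing three successive positive-density restrictions on $\ell$, the third of which pins down a single ``universal'' combinatorial description of the triple $(H_\ell, \tT_\ell, \text{standard representation})$ across $\ell \in \Lambda_0$. Since $\Lambda$ has density one, each restriction based on splitting behavior in a finite extension will still leave a positive-density set.

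First I would handle the reductivity and splitness bullet. By Proposition \ref{reductive}, $H_\ell$ is reductive for all sufficiently large $\ell$, and by Theorem \ref{bgHell}(c) there is a finite Galois extension $F/\QQ$ such that $H_\ell$ is split whenever $\ell$ is large and splits completely in $F$. Chebotarev gives positive density $1/[F:\QQ]$ to such primes, and intersecting with $\Lambda$ yields a positive-density subset $\Lambda_1 \subseteq \Lambda$ satisfying the first bullet; fix a split maximal torus $\tT_\ell \subseteq H_\ell$ for each $\ell \in \Lambda_1$.

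Next I would invoke the Serre--Wintenberger construction cited in Theorem \ref{bgHell} to argue that, for $\ell \in \Lambda_1$ sufficiently large and after a compatible choice of basis of $(A\times A')[\ell]$, the pair $(H_\ell,\tT_\ell)\subseteq \GL_{2g+2g'}$ together with the weights of the standard representation is obtained as the mod-$\ell$ reduction of a single model defined over a localization $\calO[1/N]$ of a number ring. Granted this uniformization, the character lattice $X(\tT_\ell)$, the weights $\chi_i|_{\tT_\ell}$ coming from $A$, and the weights $\chi'_j|_{\tT_\ell}$ coming from $A'$ are all described by the same combinatorial data independent of $\ell$. Each $Z_{\ell,i}$ is then cut out from $\tT_\ell$ by the single character equation $\chi_i|_{\tT_\ell}=1$; each geometric component $C$ of $Z_{\ell,i}$ is a translate of a fixed subtorus; each $C_m$ is defined by $x^m\in C$; and each $C\cap \vV'_\ell$ is defined by $\prod_j(1-\chi'_j|_{\tT_\ell}(x))=0$. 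All of these are cut out by a bounded number of polynomials of bounded degree in the coordinates of $\mathbb{G}_m^{2g+2g'}$, uniformly in $\ell$, which gives the third and fourth bullets.

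Finally, for absolute irreducibility of the components of $C_m$, observe that each geometric component of $C_m$ is a coset of a fixed subtorus of $\tT_\ell$, parameterized by the torsion (of some exponent $M$ independent of $\ell$, since $m\leq M_{A\times A'}$ and the ambient combinatorics are fixed) in a fixed finite quotient of the character lattice. I would therefore take $\Lambda_0 := \{\ell \in \Lambda_1 : \ell \text{ splits completely in } F\cdot \QQ(\zeta_M)\}$, which still has positive density. For such $\ell$ the relevant $M$-th roots of unity lie in $\FF_\ell$, so every geometric component of $C_m$ is $\FF_\ell$-rational; hence the $\FF_\ell$-irreducible components of $C_m$ coincide with its geometric components and are automatically absolutely irreducible, verifying the second bullet. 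The main obstacle in this plan is the uniformization step: extracting from Serre--Wintenberger the precise assertion that, over a positive-density set of primes, the pair $(H_\ell,\tT_\ell)\subseteq \GL_{2g+2g'}$ with its embedding really is the reduction of a single integral model — rather than merely having $\ell$-by-$\ell$ isomorphic root data. Once this is pinned down, the rest is bookkeeping with characters on a fixed torus.
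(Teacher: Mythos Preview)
Your strategy is sound and close in spirit to the paper's, but the step you flag as the ``main obstacle'' is a genuine gap, and the paper resolves it differently. You ask Serre--Wintenberger to produce a \emph{single} integral model of $(H_\ell,\tT_\ell)\hookrightarrow\GL_{2g+2g'}$ whose reduction recovers the picture for every $\ell\in\Lambda_1$. That amounts to knowing that the root datum and standard representation of $H_\ell$ are independent of $\ell$ on a positive-density set, which is not something one can currently extract from Serre's construction (it would essentially follow from the Mumford--Tate conjecture). The paper does \emph{not} claim this. Instead it uses a weaker but unconditional input, namely \cite[Lemma 3.2]{MR3474973}: after diagonalizing, the split torus $\tT_\ell\subseteq\mathbb{G}_m^{2g+2g'}$ is cut out by equations $\prod_i x_i^{n_i}=1$ with the exponents $|n_i|$ bounded by a constant $B_{A\times A'}$ independent of $\ell$. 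Hence there are only \emph{finitely many} possible equation-sets $\mathcal{A}_\ell$, and for each one the paper lifts the whole configuration $(\tT_\ell, Z_{\ell,i}, C, C_m, C\cap\vV'_\ell)$ to a variety over a fixed number field $F$. Bounded complexity then falls out because there are only finitely many such lifts.

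For absolute irreducibility, your torus-coset argument with $\QQ(\zeta_M)$ is a pleasant alternative in principle, but as written it also leans on the single-model uniformization to pin down a universal $M$. The paper instead enlarges $F$ so that, for each of the finitely many characteristic-zero lifts $\mathfrak{C}_m$, every irreducible component is already absolutely irreducible over $F$; then it invokes \cite[Lemma (9.7.5)]{MR0217086} to conclude that absolute irreducibility persists after reduction modulo all but finitely many primes of $\mathcal{O}_F$. Restricting $\Lambda_0$ to primes splitting completely in this enlarged $F$ gives the second bullet. In short: replace ``one model over $\mathcal{O}[1/N]$'' by ``finitely many models over a number field $F$, coming from the bounded-exponent description of $\tT_\ell$,'' and the rest of your outline goes through essentially as the paper does it.
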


\begin{proof}
Fix a number field $F$ and let $\Lambda_0$ be a set consisting of all but finitely many primes $\ell\in \Lambda$ that splits completely in $F$. In our proof, we will allow ourselves to increase $F$ and remove finitely many $\ell$ from $\Lambda_0$. The set $\Lambda_0$ has positive density by the Chebotarev density theorem and our assumption that $\Lambda$ has density $1$. 

By Theorem \ref{bgHell}(c), we can increase $F$ so that $H_\ell$ is reductive and split for all sufficiently large $\ell$ that split completely in $F$. So we may assume that $H_\ell$ is reductive and split for all $\ell\in \Lambda_0$.

Set $M=M_{A\times A'}$. 
Fix $\ell\in \Lambda_0$.
The torus $\tT_\ell$ is the locus in $\mathbb{G}_m^{2g+2g'}$ of a finite set of equations 
\begin{equation}\label{Aelleqn}
\left\{\prod_{i=1}^{2g+2g'}x_i^{n_i}-1:\ (n_1,\cdots,n_{2g+2g'})\in \mathcal{A}_\ell\right\}
\end{equation}
where $\mathcal{A}_\ell$ is a subset of $\ZZ^{2g+2g'}$.
As shown in the proof of \cite[Lemma 3.2]{MR3474973}, we may further assume that $\mathcal{A}_\ell$ is chosen such that $|n_i|\leq B_{A\times A'}$ for all $(n_1,\cdots,n_{2g+2g'})\in \mathcal{A}_\ell$, where $B_{A\times A'}$ is a constant that does not depend on $\ell$.

Let $\mathfrak{T}_{\mathcal{A}_\ell}\subseteq \mathbb{G}_m^{2g+2g'}$ be the subvariety defined over $F$ given by the locus of the set of equations (\ref{Aelleqn}).

For $1\leq i \leq 2g$, let $\mathfrak{Z}_{\ell,i}:=\mathfrak{T}_{\mathcal{A}_\ell}\cap \{x_i=1\}$.
We extend $F$ such that every irreducible component $\mathfrak{C}\subseteq \mathfrak{Z}_{\ell,i}$ is absolutely irreducible. 
For each irreducible component $\mathfrak{C}\subseteq \mathfrak{Z}_{\ell,i}$ and $m\leq M$, we define 
$\mathfrak{C}_m:=\{x\in \mathfrak{T}_{\mathcal{A}_\ell}:\ x^m\in \mathfrak{C}\}$.
We extend $F$ such that every irreducible component of $\mathfrak{C}_m$ is absolutely irreducible. 
We can take our number field $F$ independent of $\ell\in \Lambda_0$ since
there are only finitely many possibilities for $\mathcal{A}_\ell\subseteq \ZZ^{2g+2g'}$.

Suppose $X/F$ is a variety such that all irreducible components are absolutely irreducible. Then
\cite[Lemma (9.7.5)]{MR0217086} says that for any model $\mathcal{X}/\mathcal{O}_F$, the irreducible components of $\mathcal{X}_{\FF_\lambda}$ are also absolutely irreducible for all but finitely many prime ideals $\lambda\subseteq \mathcal{O}_F$. 
Hence, by our choice of $F$ above, for all but finitely many prime ideals $\lambda\subseteq \mathcal{O}_F$, every irreducible component of $(\mathfrak{Z}_{\ell,i})_{\FF_\lambda}\ (1\leq i\leq 2g)$ is absolutely irreducible. Moreover, by further excluding finitely many $\lambda$, for each irreducible component $\mathfrak{C}$ of $\mathfrak{Z}_{\ell,i}$ and $m\leq M$, the irreducible components of $(\mathfrak{C}_m)_{\FF_\lambda}$ are absolutely irreducible.\\

Choose a prime ideal $\lambda|\ell$ of $\mathcal{O}_F$. Since $\ell$ splits completely in $F$, we have $\FF_\lambda = \FF_\ell$. By our choice of $\mathcal{A}_\ell$, the torus $(\mathfrak{T}_{\mathcal{A}_\ell})_{\FF_\lambda}$ is equal to $\tT_\ell$ over $\FF_\lambda = \FF_\ell$. Similarly, for each $1\leq i \leq 2g$, we have an equality $(\mathfrak{Z}_{\ell,i})_{\FF_\lambda} = Z_{\ell,i}$ of varieties over $\FF_\ell$.

Take any irreducible component $\mathfrak{C}$ of $\mathfrak{Z}_{\ell,i}$. After removing a finite number of primes from $\Lambda_0$, we may assume that $C:=(\mathfrak{C})_{\FF_\lambda}$ is an absolutely irreducible variety defined over $\FF_\ell$. In fact, every irreducible component of $Z_{\ell,i}$ arises from such a $\mathfrak{C}$. 
For any $m\leq M$, we have $C_m = (\mathfrak{C}_m)_{\FF_\lambda}$. By removing a finite number of primes from $\Lambda_0$, we may assume that the irreducible components of $C_m$ are absolutely irreducible. 

Note that there are only finitely many $\mathfrak{C}$ and $\mathfrak{C}_m$ as we vary $\ell\in \Lambda_0$ and $m\leq M$ since there are only finitely many $\mathcal{A_\ell}$. So the complexity of all $C$ and $C_m$ is bounded. Moreover, since $C\cap \vV'_\ell= \bigcup_{i=2g+1}^{2g'} C\cap \{x_i=1\}$, the complexity of all $C\cap \vV'_\ell$ is also bounded. 
\end{proof}

\subsection{Proof of Proposition \ref{threeptone}}\label{pfofthreeptone}
Let $\Lambda_0$ be a set of positive density as in Lemma \ref{threeptfour}.
Fix $\ell\in \Lambda_0$. By Lemma \ref{threeptfour}, $H_\ell$ is split. 
Let $\tT_\ell\subseteq H_\ell$ be a split maximal torus and we use the same setup as in \S\ref{Lambda0}.
By Lemma \ref{reduction}, it suffices to prove that $\vV_\ell\cap \tT_\ell \subseteq \vV'_\ell\cap \tT_\ell$.

Suppose that $\vV_\ell\cap \tT_\ell\not\subseteq \vV'_\ell\cap \tT_\ell$; we want to get a contradiction when $\ell\in \Lambda_0$ is large enough. There exists an irreducible component $C$ of $\vV_\ell\cap \tT_\ell$ such that $C\not=\vV'_\ell\cap C$. Let $d$ be the dimension of $C$. Since $C$ is irreducible, the dimension $d'$ of $\vV'_\ell\cap C$ is strictly less than $d$. 
Define 
\[
\Gamma_\ell := C(\FF_\ell)\cap \overline{\rho}_{A\times A',\ell}(\Gal_K)
\]
and $\gamma_\ell:=|\Gamma_\ell|$.

The following lemma is an application of the Weil conjectures, which approximates the cardinality of $\FF_\ell$-points of an affine variety $V$ defined over $\FF_\ell$. 

\begin{lemma}\label{pointcounting}
Let $\{V_i\}_{i\in I}$ be a collection of affine varieties with $V_i$ defined over a finite field $\FF_{\ell_i}$ for each $i\in I$. 
Suppose $\{V_i\}_{i\in I}$ has bounded complexity.
\begin{enumerate}[(a)]
\item For all $i\in I$, we have
\[
|V_i(\FF_{\ell_i})| = O(\ell_i^{\dim V_i})
\] 
where the implicit constant is independent of $i\in I$. 

\item 
Fix an $i\in I$. Suppose that the top dimensional irreducible components of $V_i$ are absolutely irreducible. Then 
\[
|V_i(\FF_{\ell_i})|\geq \ell_i^{\dim V_i} + O(\ell_i^{\dim V_i-1/2})
\]
where the implicit constant is independent of $i$. 
\end{enumerate}
\end{lemma}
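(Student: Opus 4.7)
The plan is to reduce both parts to the Lang--Weil estimate, applied with uniform constants supplied by the bounded complexity hypothesis. The key preliminary input I would invoke is an effective Bezout-type bound (for instance that of Heintz): for any subvariety $V \subseteq \mathbb{A}^n_{\FF_q}$ cut out by $r$ polynomials of degree at most $D$, the number of its geometric irreducible components and their degrees are bounded in terms of $n$, $r$, $D$ alone. In particular, for our family $\{V_i\}_{i \in I}$ of bounded complexity, there exist constants $M$ and $N$, independent of $i \in I$, such that each $V_i$ decomposes over $\overline{\FF}_{\ell_i}$ into at most $M$ irreducible components, each of degree at most $N$.

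For part (a), I would apply the elementary Schwartz--Zippel-type bound that an irreducible subvariety of $\mathbb{A}^n_{\overline{\FF}_q}$ of dimension $d$ and degree $\Delta$ has at most $\Delta \cdot q^d$ points over $\FF_q$. Decomposing $V_i$ over $\overline{\FF}_{\ell_i}$ into its geometric irreducible components $W_{i,j}$ of dimensions $d_{i,j} \leq \dim V_i$, and summing, one obtains
\[
|V_i(\FF_{\ell_i})| \;\leq\; \sum_j |W_{i,j}(\FF_{\ell_i})| \;\leq\; MN \cdot \ell_i^{\dim V_i},
\]
with the bound independent of $i$.

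For part (b), I would select a top-dimensional absolutely irreducible component $W \subseteq V_i$ of dimension $d = \dim V_i$ and degree at most $N$. The Lang--Weil inequality then yields
\[
|W(\FF_{\ell_i})| \;\geq\; \ell_i^d + O(\ell_i^{d-1/2}),
\]
where the implicit constant depends only on $n$ and $N$, and hence is independent of $i \in I$. Since $W(\FF_{\ell_i}) \subseteq V_i(\FF_{\ell_i})$, the same lower bound holds for $V_i$.

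The main point, rather than a real obstacle, is ensuring the uniformity of all the implicit constants. This is precisely what the bounded complexity hypothesis delivers: it allows us to use effective Nullstellensatz/Bezout bounds to control degrees and numbers of components in a single stroke, after which the classical Lang--Weil inequality supplies implicit constants depending only on ambient dimension and degree. No deeper input should be required.
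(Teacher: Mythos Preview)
Your proposal is correct and follows the same overall strategy as the paper: obtain uniform Lang--Weil/Weil-type estimates by first bounding the number (and, in your case, degrees) of geometric irreducible components purely in terms of the complexity data $(n,r,D)$. The paper packages the point-counting step by citing an explicit two-sided inequality of Zywina of the form
\[
\bigl||V(\FF_\ell)| - b\,\ell^{\dim V}\bigr| \le 6(3+rD)^{n+1}2^r\,\ell^{\dim V - 1/2},
\]
valid when the $b$ top-dimensional geometric components are all defined over $\FF_\ell$, together with a one-sided upper bound without that hypothesis; it then bounds $b$ via Katz's \'etale-cohomological estimate on $\dim H_c^{2\dim V}$. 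Your route replaces the cohomological input by an effective Bezout/Heintz degree bound to control both the number and the degrees of components, uses the elementary Schwartz--Zippel-type inequality $|W(\FF_q)|\le \deg(W)\,q^{\dim W}$ for (a), and invokes Lang--Weil directly on a single absolutely irreducible top-dimensional component for (b). This is a bit more elementary than the paper's argument (no \'etale cohomology), at the cost of importing the Heintz bound; conversely, the paper's citation of Zywina gives fully explicit constants in one stroke. Either way, the uniformity in $i$ is exactly what bounded complexity provides, and your handling of (b)---picking one $\FF_{\ell_i}$-irreducible, absolutely irreducible component and using $W(\FF_{\ell_i})\subseteq V_i(\FF_{\ell_i})$---is correct.
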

\begin{proof}
Let $V\subseteq \mathbb{A}_{\FF_\ell}^n$ with $n>1$ be a closed subvariety defined by the simultaneous vanishing of $r$ polynomials in $\FF_\ell[x_1,\cdots,x_n]$ each of degree at most $D$. Let $b$ be the number of top dimensional irreducible components of $V_{\overline{\FF}_\ell}$.
In \cite[Theorem 2.1]{MR3474973}, Zywina gave the following inequalities:
\begin{equation}\label{DZ1}
|V(\FF_\ell)|\leq b\ell^{\dim V} + 6(3+rD)^{n+1}2^r\ell^{\dim V - 1/2}.
\end{equation}

Suppose further that these components are all defined over $\FF_\ell$. Then 
\begin{equation}\label{DZ2}
\left||V(\FF_\ell)|-b\ell^{\dim V}\right|\leq 6(3+rD)^{n+1}2^r\ell^{\dim V - 1/2}.
\end{equation}

We claim that $b$ is bounded in terms of $n,r$ and $D$ only.
The number $b$ of top dimensional irreducible components of $V_{\overline{\FF}_\ell}$ is equal to the dimension of the $\ell'$-adic \'{e}tale cohomology group $H_c^{2n}(V_{\overline{\FF}_\ell},\QQ_{\ell'})$ with compact support for a prime $\ell'\not= \ell$. Katz \cite[Theorem 1]{MR1803934} showed that $\dim_{\QQ_{\ell'}}H_c^{2n}(V_{\overline{\FF}_\ell},\QQ_{\ell'})$ can be bounded in terms of $n,r$ and $D$ only. The claim is now clear.

Recall that we assumed $\{V_i\}_{i\in I}$ has bounded complexity, i.e., the numbers $n_i,r_i,D_i$ as described above for each $V_i$ are bounded independent of $i$ and hence so is $b_i$ in inequalities \ref{DZ1} and \ref{DZ2} above. Now (a) follows by applying inequality \ref{DZ1} to each $V_i$ and (b) follows by applying inequality \ref{DZ2} to our chosen $V_i$ and using that $b_i\geq 1$. 
\end{proof}

We will now give a lower bound for $\gamma_\ell$.
Set $m_\ell:=[H_\ell(\FF_\ell):\overline{\rho}_{A\times A',\ell}(\Gal_K)]$. By Theorem \ref{bgHell}(a), there exists a constant $M:=M_{A\times A'}$ not depending on $\ell$ such that $m_\ell\leq M$. 
Consider the function 
\[
\varphi: C_{m_\ell}(\FF_\ell)\to \Gamma_\ell,
\ \  g\mapsto g^{m_\ell};
\]
it is well defined since for all $h\in H_\ell(\FF_\ell)$ we have $h^{m_\ell}\in \overline{\rho}_{A\times A',\ell}(\Gal_K)$. 
Since $\tT_\ell$ is a split torus of dimension at most $2g+2g'$, the kernel of $\varphi$ has cardinality bounded by $m_\ell^{2g+2g'}\leq M^{2g+2g'}$.
Since $\ell\in \Lambda_0$, the (top dimensional) irreducible components of each $C_{m_\ell}$ are all absolutely irreducible by Lemma \ref{threeptfour}. 
Hence, by Lemma \ref{pointcounting}(b), we have
\begin{equation}\label{lowerbound}
\gamma_\ell
=|\Gamma_\ell|\geq
\frac{|C_{m_\ell}(\FF_\ell)|}{M^{2g+2g'}}\geq
\frac{\ell^d}{M^{2g+2g'}}+\frac{O(\ell^{d-1/2})}{M^{2g+2g'}}
\end{equation}
where the error term is independent of $\ell$ since the collection of varieties $\{C_{m_\ell}\}_{\ell\in \Lambda_0,C}$ has bounded complexity by Lemma \ref{threeptfour}.  
Inequality (\ref{lowerbound}) gives our lower bound of $\gamma_\ell$.

We will now give a upper bound for $\gamma_\ell$. 
Recall from equation (\ref{heart}) that we have
\[
\vV_\ell(\FF_\ell)\cap \overline{\rho}_{A\times A',\ell}(\Gal_K)\subseteq 
\vV'_\ell(\FF_\ell)\cap \overline{\rho}_{A\times A',\ell}(\Gal_K)
\]
and so 
\[
\Gamma_\ell
=C(\FF_\ell)\cap \overline{\rho}_{A\times A',\ell}(\Gal_K)
\subseteq 
(C\cap \vV'_\ell)(\FF_\ell)\cap \overline{\rho}_{A\times A',\ell}(\Gal_K)
\subseteq (C\cap \vV'_\ell)(\FF_\ell)
\]

Recall that $C\cap\vV'_\ell$ has dimension $d'\leq d-1$. 
Hence, by Lemma \ref{pointcounting}(a), we have 
\begin{equation}\label{upperbound}
\gamma_\ell = O(\ell^{d'})
\end{equation}
where the error term is independent of $\ell$ since the collection of varieties $\{C\cap \vV'_\ell\}_{\ell\in\Lambda_0,C}$ has bounded complexity by Lemma \ref{threeptfour}. Inequality (\ref{upperbound}) gives our upper bound of $\gamma_\ell$.

By combining inequalities (\ref{lowerbound}) and (\ref{upperbound}), we obtain
\begin{equation}\label{final}
\frac{\ell^d}{M^{2g+2g'}}+\frac{O(\ell^{d-1/2})}{M^{2g+2g'}}
= \gamma_\ell
= O(\ell^{d'})
\end{equation}
where the error terms are independent of $\ell$.
In particular, $\ell^d=O(\ell^{d'})$. 
By removing a finite number of primes from $\Lambda_0$, this will contradict $d'<d$.  
Therefore, $\vV_\ell\cap \tT_\ell \subseteq \vV'_\ell\cap \tT_\ell$.
This completes the proof of Proposition \ref{threeptone}.

\subsection{Proof of Proposition \ref{onepttwo}}\label{pfofonepttwo}

Take any prime ideal $\p\in S$. We need to show that $\rad P_{A,\p}(x)$ divides $\rad P_{A',\p}(x)$.

\begin{lemma} \label{dividepoly}
Suppose $f(x)$ and $g(x)\in \ZZ[x]$ are both monic such that the roots in $\overline{\FF}_\ell$ of $f(x)$ are also roots of $g(x)$ for infinitely many $\ell$.
Then $\rad(f)$ divides $rad(g)$.
\end{lemma}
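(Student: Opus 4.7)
The plan is to factor $f$ into monic irreducibles over $\mathbb{Z}$ and show that each irreducible factor also divides $g$. Write $f(x) = \prod_i p_i(x)^{a_i}$ with $p_i \in \mathbb{Z}[x]$ monic and irreducible, so that $\rad(f) = \prod_i p_i$. I will prove, by contradiction, that $p_i \mid g$ in $\mathbb{Z}[x]$ for each $i$; since the $p_i$ are pairwise coprime and monic, this will give $\rad(f) \mid \rad(g)$.

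Fix an index $i$ and suppose $p_i \nmid g$ in $\mathbb{Z}[x]$. Since $p_i$ is irreducible in $\mathbb{Q}[x]$, the polynomials $p_i$ and $g$ are coprime in $\mathbb{Q}[x]$. Applying Bezout's identity in $\mathbb{Q}[x]$ and clearing denominators, I obtain an identity
\[
u(x)\, p_i(x) + v(x)\, g(x) = N
\]
for some $u,v \in \mathbb{Z}[x]$ and some nonzero integer $N$. Consequently, for every prime $\ell$ with $\ell \nmid N$, the reductions of $p_i$ and $g$ modulo $\ell$ are coprime in $\mathbb{F}_\ell[x]$, and therefore share no root in $\overline{\mathbb{F}}_\ell$.

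On the other hand, because $p_i$ is monic, its reduction modulo any prime $\ell$ retains the same (positive) degree, and hence admits some root $\alpha \in \overline{\mathbb{F}}_\ell$. Since $p_i \mid f$ in $\mathbb{Z}[x]$, such a root $\alpha$ is also a root of $f$ modulo $\ell$, and by the hypothesis of the lemma, for the infinitely many primes $\ell$ in question it must then be a root of $g$ modulo $\ell$. Intersecting that infinite set of primes with the cofinite set $\{\ell : \ell \nmid N\}$ produces infinitely many $\ell$ for which $p_i$ and $g$ \emph{do} share a root in $\overline{\mathbb{F}}_\ell$, contradicting the previous paragraph.

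The argument is essentially a coprimality-plus-Bezout dance, and I do not expect any serious obstacle. The only points that require minimal care are ensuring the Bezout relation is rescaled to lie in $\mathbb{Z}[x]$ with nonzero integer right-hand side (standard), and using monicity of $p_i$ to guarantee that $p_i \bmod \ell$ still has a root in $\overline{\mathbb{F}}_\ell$ (so that the hypothesis can actually be triggered).
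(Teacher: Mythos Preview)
Your proof is correct. Both your argument and the paper's follow the same overall strategy: assume some irreducible factor (equivalently, some root) of $f$ does not appear in $g$, produce a fixed nonzero integer witnessing this, and derive a contradiction at any of the infinitely many primes $\ell$ not dividing that integer.

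The difference lies only in how the witnessing integer is manufactured. The paper picks a root $\alpha\in\overline{\QQ}$ of $f$ with $g(\alpha)\neq 0$, works inside a number field $F$ containing $\alpha$ and the roots of $g$, and takes the norm $d=N_{F/\QQ}\bigl(\prod_{g(\beta)=0}(\alpha-\beta)\bigr)$; the contradiction is that $d$ would lie in $\ell\ZZ$ for the relevant $\ell$. You instead stay in $\ZZ[x]$, clear denominators in a B\'ezout relation $u\,p_i+v\,g=N$ in $\QQ[x]$, and use that $p_i\bmod\ell$ and $g\bmod\ell$ are coprime for $\ell\nmid N$. Your route is a bit more elementary---it avoids introducing the splitting field and the norm---while the paper's computation is essentially the resultant in disguise. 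Either way the content is the same: coprimality in $\QQ[x]$ persists modulo all but finitely many primes.
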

\begin{proof}
Suppose that $\rad(f)$ does not divide $\rad(g)$ and hence there exists an $\alpha\in \overline{\QQ}$ such that $f(\alpha)=0$ and $g(\alpha)\not=0$. Let $F/\QQ$ be a finite Galois extension containing $\alpha$ and all the roots of $g(x)$.
Define 
\[
d:=N_{F/\QQ}\left( \prod_{\beta\in F,\ g(\beta)=0}(\alpha-\beta)\right).
\]
Since $f,g\in \ZZ[x]$ are monic and $g(\alpha)\not=0$, $d$ is a non-zero integer.   
From the assumption of the lemma, there is a prime $\ell\nmid d$ for which the roots in $\overline{\FF}_\ell$ of $f(x)$ are also roots of $g(x)$. Take any prime ideal $\mathcal{L}\subseteq \mathcal{O}_F$ dividing $\ell$. 
For $a\in\mathcal{O}_F$, let $\overline{a}$ be its image in $\mathcal{O}_F/\mathcal{L}$. 
Since every root in $\overline{\FF}_\ell$ of $f(x)$ is also a root of $g(x)$,
we have $\prod_{\beta\in F,\ g(\beta)=0} (\overline{\alpha}-\overline{\beta})=0$ and hence
\[
\prod_{\beta\in F,\ g(\beta)=0} (\alpha-\beta)\in \mathcal{L}.
\]
Therefore, $d\in N_{F/\QQ}(\mathcal{L})\subseteq\ell \ZZ$ which contradicts $\ell\nmid d$. We conclude that $\rad(f)$ divides $\rad(g)$.
\end{proof}

Take $\ell\in\Lambda$ to be any of the infinitely many primes from Proposition \ref{threeptone} such that $\vV_\ell\subseteq \vV'_\ell$ and $\p\nmid \ell$. 
By Theorem \ref{bgHell}(b), we may further assume that $H_\ell$ contains the group $\mathbb{G}_m$ of homotheties.

We claim that the roots in $\overline{\FF}_\ell$ of $P_{A,\p}(x)$ are also roots of $P_{A',\p}(x)$. 
Set $(B,B'):=\overline{\rho}_{A\times A',\ell}(\Frob_\p)\in H_\ell(\FF_\ell)$. 
Suppose that $\lambda\in\overline{\FF}_\ell^\times$ is any root of $\det(xI-B)\equiv P_{A,\p}(x) \pmod{\ell}$.
Since $\mathbb{G}_m\subseteq H_\ell$, we have $(\lambda^{-1}B,\lambda^{-1}B')\in H_\ell(\overline{\FF}_\ell)$. Since $\det(I-\lambda^{-1}B)=0$, we have 
$(\lambda^{-1}B,\lambda^{-1}B')\in \vV_\ell(\overline{\FF}_\ell)$.
By our choice of $\ell$, we have $\vV_\ell\subseteq \vV'_\ell$ and thus 
$(\lambda^{-1}B,\lambda^{-1}B')\in \vV'_\ell(\overline{\FF}_\ell)$.
We deduce that $\lambda$ is also a root of $\det(xI-B')\equiv P_{A',\p}(x)\pmod{\ell}$. This proves our claim. 

Since $P_{A,\p}(x)$ and $P_{A',\p}(x)$ are monic and the roots in $\overline{\FF}_\ell$ of $P_{A,\p}(x)$ are also roots of $P_{A',\p}(x)$ for infinitely many $\ell$, Lemma \ref{dividepoly} implies that $\rad P_{A,\p}(x)$ divides $\rad P_{A',\p}(x)$. This proves Proposition \ref{onepttwo}.

\section{Frobenius Polynomials and Weights}\label{secondpart}
Let $A$ and $A'$ be simple and non-isogenous abelian varieties defined over a number field $K$ of dimensions $g$ and $g'$ respectively. Assume that $K_{A\times A'}^{\text{conn}}=K$, equivalently, the $\ell$-adic monodromy groups $G_{A\times A',\ell}$ are connected. Note that in particular, the $\ell$-adic monodromy groups $G_{A,\ell}$ and $G_{A',\ell}$ are connected.  
We will prove the following theorem in \S\ref{pfoffourptone}.

\begin{theorem}\label{fourptone}
The polynomials $P_{A,\p}(x)$ and $P_{A',\p}(x)$ are relatively prime for almost all $\p\in \Sigma_K$.
\end{theorem}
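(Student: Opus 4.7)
The plan is to fix a prime $\ell$ and analyze the monodromy group $G := G_{A \times A', \ell}$, which is connected reductive under our assumptions, acting on $V := V_\ell(A)$ and $V' := V_\ell(A')$ via the natural inclusion $G \hookrightarrow G_{A,\ell} \times G_{A',\ell}$. The key representation-theoretic input will be that, for a maximal torus $T \subseteq G_{\overline{\QQ}_\ell}$, the weight sets $\Omega(V)$ and $\Omega(V') \subseteq X(T)$ are disjoint. Granted this, the remaining work is to translate the Zariski-generic coprimality of Frobenius polynomials into a density statement via Chebotarev.

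To prove $\Omega(V) \cap \Omega(V') = \emptyset$, suppose $\alpha$ lies in both. Theorem \ref{Pink} applied to $A \times A'$ makes $(G, \iota^{\vee}_{A \times A', \ell})$ a weak Mumford-Tate pair of weights $\{0,1\}$, so by Proposition \ref{minuscule} every irreducible $G_{\overline{\QQ}_\ell}$-subrepresentation of $(V \oplus V') \otimes \overline{\QQ}_\ell$ is minuscule: its weights form a single Weyl orbit, each of multiplicity one. Then $\alpha$ lies in some irreducible constituent $V_i \subseteq V \otimes \overline{\QQ}_\ell$ and in some $V'_j \subseteq V' \otimes \overline{\QQ}_\ell$, and both $V_i$ and $V'_j$ have weight set $W \cdot \alpha$ with trivial multiplicities, so $V_i \cong V'_j$ as $G_{\overline{\QQ}_\ell}$-modules (irreducibles with the same highest weight are isomorphic). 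Hence $\Hom_G(V, V') \ne 0$; by Zariski density of $\rho_{A \times A', \ell}(\Gal_K)$ in $G$ this equals $\Hom_{\Gal_K}(V, V')$, which by Faltings (Theorem \ref{bgAGell}(b)) is $\Hom(A, A') \otimes \QQ_\ell = 0$ since $A$ and $A'$ are simple and non-isogenous -- a contradiction.

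Next, consider the closed subvariety $X \subseteq G$ cut out by the vanishing of the resultant of the two characteristic polynomials $\det(xI - g|_V)$ and $\det(xI - g|_{V'})$. For $t \in T$, these share a root iff $(\alpha - \beta)(t) = 1$ for some $\alpha \in \Omega(V)$, $\beta \in \Omega(V')$; by disjointness each such $\alpha - \beta$ is a nonzero character of $T$ cutting out a proper subtorus, and finitely many proper subtori cannot exhaust $T$. Hence $X \cap T \subsetneq T$, so $X$ is a proper closed subvariety of the irreducible variety $G$, with $\dim X < \dim G$.

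Finally, $\rho_{A \times A', \ell}(\Gal_K)$ is a compact $\ell$-adic Lie subgroup of $G(\QQ_\ell)$ whose Zariski closure is $G$, so any proper closed subvariety of $G$ meets it in a subset of Haar measure zero. A Chebotarev-equidistribution argument (in the form due to Serre for $\ell$-adic Galois representations) then shows the density of $\p \in \Sigma_K$ with $\rho_{A \times A', \ell}(\Frob_\p) \in X(\QQ_\ell)$ is zero; for the complementary density-one set of $\p$, the polynomials $P_{A,\p}(x)$ and $P_{A',\p}(x)$ share no root in $\overline{\QQ}_\ell$ and hence are coprime. The main obstacle is the first step (disjointness of weights), where Pink's minuscule theorem and Faltings' theorem must be combined to bridge the representation-theoretic and arithmetic sides; the subvariety analysis and Chebotarev step are more routine once disjointness is in hand.
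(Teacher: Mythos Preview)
Your proposal is correct and follows essentially the same route as the paper: first establish disjointness of the weight sets $\Omega_{A,\ell}$ and $\Omega_{A',\ell}$ via Proposition~\ref{minuscule} and Faltings' theorem (this is the paper's Lemma~\ref{disjointweight}, and your argument using equality of highest weights is equivalent to the paper's trace comparison), then deduce that the locus in $G$ where the two characteristic polynomials share a root is a proper closed subvariety, and finish with an $\ell$-adic Chebotarev argument. The only cosmetic difference is that the paper phrases the second step inside $T$ (defining a subvariety $\mathcal{Z}\subsetneq T$ and passing through the quotient $G^{\#}=T/\!/W$ to obtain a conjugation-stable proper subvariety of $G$), whereas you cut out the resultant locus $X$ directly in $G$ and check $X\cap T\subsetneq T$; these are equivalent. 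One small point: the kernel of a nonzero character of $T$ is a proper closed subgroup but need not be connected, so ``proper subtorus'' should read ``proper closed subgroup''; and for the final step you may as well invoke that $\rho_{A\times A',\ell}(\Gal_K)$ is \emph{open} in $G(\QQ_\ell)$ (Bogomolov), which makes the Haar-measure-zero claim immediate and is exactly what the paper uses when it cites Serre's Chebotarev corollary.
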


\begin{remark}
Theorem \ref{fourptone} is false without the connectedness assumption. For example, if $A$ and $A'$ are two non-isogenous CM elliptic curves over $\QQ$, then $P_{A,p}(x) = x^2 + p = P_{A',p}(x)$
for a set of primes $p$ of positive density.
\end{remark}

\subsection{Weights for non-isogenous abelian varieties}\label{weightssection}

Set $G=G_{A\times A',\ell}$; it is connected and reductive.
Fix a maximal torus $T\subseteq G$.  
Let $\Omega_{A,\ell}\subseteq X(T)$ and $\Omega_{A',\ell}\subseteq X(T)$ be the weights of $G$ acting on $V_\ell(A)$ and $V_\ell(A')$ respectively. 
Note that
\[
V_\ell(A\times A')=V_\ell(A)\oplus  V_\ell(A').
\]
Let $W=W(G,T)=N_G(T)(\overline{\QQ}_\ell)/T(\overline{\QQ}_\ell)$ be the absolute Weyl group of $G$ with respect to $T$;  
it acts on $\Omega_{A,\ell}$ and $\Omega_{A',\ell}$.

\begin{lemma}\label{disjointweight}
The sets $\Omega_{A,\ell}$ and $\Omega_{A',\ell}$ are disjoint. 
\end{lemma}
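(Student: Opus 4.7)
The plan is to argue by contradiction: if $\Omega_{A,\ell}$ and $\Omega_{A',\ell}$ shared a common weight $\alpha$, I would produce a nonzero homomorphism $A\to A'$ over $K$, contradicting that $A$ and $A'$ are simple and non-isogenous. The two key ingredients are Proposition \ref{minuscule} (each irreducible subrepresentation of $V_\ell(A\times A')\otimes_{\QQ_\ell}\overline{\QQ}_\ell$ is minuscule for $G=G_{A\times A',\ell}$) and Faltings' theorem (Theorem \ref{bgAGell}).

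First I would decompose the $G_{\overline{\QQ}_\ell}$-representations
\[
V_\ell(A)\otimes_{\QQ_\ell}\overline{\QQ}_\ell=\bigoplus_i V_i,\qquad V_\ell(A')\otimes_{\QQ_\ell}\overline{\QQ}_\ell=\bigoplus_j V'_j
\]
into irreducible subrepresentations, each of which is minuscule by Proposition \ref{minuscule}. If $\alpha\in\Omega_{A,\ell}\cap \Omega_{A',\ell}$, then some $V_i$ and some $V'_j$ both have $\alpha$ as a weight. Since both are minuscule, their weight sets equal the single $W$-orbit $W\cdot\alpha$, and every weight occurs with multiplicity one in each. Hence $V_i$ and $V'_j$ have the same formal character, which in characteristic zero forces $V_i\cong V'_j$ as $G_{\overline{\QQ}_\ell}$-representations (equivalently, they share a highest weight).

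Composing the isomorphism $V_i\cong V'_j$ with the inclusion $V_i\hookrightarrow V_\ell(A)\otimes\overline{\QQ}_\ell$ and a $G_{\overline{\QQ}_\ell}$-equivariant projection $V_\ell(A')\otimes\overline{\QQ}_\ell\twoheadrightarrow V'_j$ (which exists since $G_{\overline{\QQ}_\ell}$ is reductive, so the decomposition is semisimple) yields a nonzero element of $\Hom_{G_{\overline{\QQ}_\ell}}(V_\ell(A)\otimes\overline{\QQ}_\ell,\,V_\ell(A')\otimes\overline{\QQ}_\ell)$. By flat base change, this implies $\Hom_G(V_\ell(A),V_\ell(A'))\neq 0$, and because $\rho_{A\times A',\ell}(\Gal_K)$ is Zariski dense in $G$, this Hom-space coincides with $\Hom_{\QQ_\ell[\Gal_K]}(V_\ell(A),V_\ell(A'))$. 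By Faltings (Theorem \ref{bgAGell}(b)), the latter is $\Hom(A,A')\otimes_{\ZZ}\QQ_\ell$, and since $A$, $A'$ are simple and non-isogenous over $K$, $\Hom(A,A')=0$, a contradiction.

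The only real subtlety I anticipate is step where I conclude $V_i\cong V'_j$ from a shared weight — this rests on the standard fact that over a field of characteristic zero an irreducible representation of a connected reductive group is determined by its highest weight, combined with the minuscule property to ensure the whole weight set is a single $W$-orbit. Everything else is a direct application of Faltings' theorem and the Zariski-density of the Galois image in the monodromy group; the connectedness assumption $K_{A\times A'}^{\conn}=K$ is used implicitly to invoke Proposition \ref{minuscule}.
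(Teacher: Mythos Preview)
Your argument is correct and essentially identical to the paper's own proof: both proceed by contradiction, use Proposition~\ref{minuscule} to see that an irreducible constituent containing a common weight has weight set equal to a single $W$-orbit with all multiplicities one, conclude that the two irreducible constituents are isomorphic (you via equal formal characters/highest weights, the paper via equality of traces on $T$ and reductivity of $G$), and then invoke Faltings (Theorem~\ref{bgAGell}(b)) to produce a nonzero element of $\Hom(A,A')$, contradicting simplicity and non-isogeny. The only cosmetic slip is in the direction of your maps when building the nonzero homomorphism---you want a projection $V_\ell(A)\otimes\overline{\QQ}_\ell\twoheadrightarrow V_i$ followed by $V_i\cong V'_j$ followed by the inclusion $V'_j\hookrightarrow V_\ell(A')\otimes\overline{\QQ}_\ell$---but the conclusion is clear regardless.
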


\begin{proof}
Suppose $\Omega_{A,\ell}\cap\Omega_{A',\ell}\not=\emptyset$. Let $\widetilde{\Omega}$ be the $W$-orbit of an element in $\Omega_{A,\ell}\cap\Omega_{A',\ell}$.
Let 
\[
U\subseteq V_\ell(A)\otimes_{\QQ_\ell} \overline{\QQ}_\ell \subseteq V_\ell(A\times A')\otimes_{\QQ_\ell}\overline{\QQ}_\ell
\]
be an irreducible representation of $G_{\overline{\QQ}_\ell}$ for which $\Omega(U)$ contains an element of $\widetilde{\Omega}$. We have $\widetilde{\Omega}\subseteq \Omega(U)$ since $\Omega(U)$ is stable under the action of $W$.
The representation $U$ is minuscule by Proposition \ref{minuscule}, so $\Omega(U)=\widetilde{\Omega}$ and each weight of $U$ has multiplicity $1$. Denote by $\sigma$ the representation of $G_{\overline{\QQ}_\ell}$ on $U$. Similarly, we can construct an irreducible subrepresentation $\sigma'$ of $V_\ell(A')\otimes_{\QQ_\ell} \overline{\QQ}_\ell$ with weights $\widetilde{\Omega}$ that each have multiplicity $1$. Therefore, for every $t\in T$, we have 
\[
\text{tr}\circ \sigma(t)=\sum_{\alpha\in\tilde{\Omega}} \alpha(t)=\text{tr}\circ \sigma'(t)
\]
for all $t\in T$. Since $G$ is reductive, this implies that $\text{tr}\circ \sigma = \text{tr}\circ \sigma'$ and hence
$\sigma$ and $\sigma'$ are isomorphic. So $V_\ell(A)\otimes_{\QQ_\ell}\overline{\QQ}_\ell$ and $V_\ell(A')\otimes_{\QQ_\ell}\overline{\QQ}_\ell$ have an irreducible representation of $G_{\overline{\QQ}_\ell}$ in common.
Therefore,
\[
\Hom_{{{\QQ}_\ell}[\Gal_K]}(V_\ell(A),V_\ell(A'))\otimes_{\QQ_\ell} {\overline{\QQ}_\ell}=
\Hom_{{\overline{\QQ}_\ell}[\Gal_K]}(V_\ell(A)\otimes_{\QQ_\ell} {\overline{\QQ}_\ell},V_\ell(A')\otimes_{\QQ_\ell} {\overline{\QQ}_\ell})\not = 0.
\]
Since $\Hom_{{{\QQ}_\ell}[\Gal_K]}(V_\ell(A),V_\ell(A'))\not=0$, we deduce by Theorem \ref{bgAGell}(b) that $\Hom(A,A')\not=0$. However, this is impossible since $A$ and $A'$ are simple and non-isogenous. Therefore, $\Omega_{A,\ell}$ and $\Omega_{A',\ell}$ are disjoint.
\end{proof}

\subsection{Proof of Theorem \ref{fourptone}}\label{pfoffourptone}
Fix notation as in $\S\ref{weightssection}$.
By Lemma \ref{disjointweight}, we have $\Omega_{A,\ell}\cap \Omega_{A',\ell}=\emptyset$. 
Define 
\[
\mathcal{Z}:=\left\{t\in T:\ \prod_{\underset{\alpha\not=\beta}{\alpha, \beta\in\Omega_{A,\ell}\cup\Omega_{A',\ell}}}\ \left(\alpha(t)-\beta(t)\right)=0 \right\}; 
\]
it is a subvariety of $T$ defined over $\QQ_\ell$ since $\Gal_{\QQ_\ell}$ acts on $\Omega_{A,\ell}\cup\Omega_{A',\ell}$. Moreover, $\dim \mathcal{Z}<\dim T$ since $T$ is irreducible and $\mathcal{Z}\not=T$ ($\Omega_{A,\ell}$ and $\Omega_{A',\ell}$ are non-empty and disjoint, so $\#(\Omega_{A,\ell}\cup \Omega_{A',\ell})\geq 2$).

For each $\p\in\Sigma_K$ for which $A$ and $A'$ have good reduction and $\p\nmid \ell$, choose $t_\p\in T({\overline{\QQ}_\ell})$ such that $t_\p$ is conjugate to $\rho_{A\times A',\ell}(\Frob_\p)$ in $G({\overline{\QQ}_\ell})$.

\begin{lemma}\label{Z}
For almost all $\p\in\Sigma_K$, we have $\alpha(t_\p)\not=\beta(t_\p)$ for all $\alpha,\beta\in \Omega_{A,\ell}\cup\Omega_{A',\ell}$ with $\alpha\not=\beta$.
\end{lemma}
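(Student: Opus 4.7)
The lemma asserts that the semisimple Frobenius $t_\p\in T(\overline{\QQ}_\ell)$, which is well-defined up to $W$-conjugacy, lies in the proper $W$-stable subvariety $\mathcal{Z}\subsetneq T$ for only a density-zero set of $\p$. My plan is first to promote $\mathcal{Z}$ to a conjugation-invariant proper closed subvariety $\mathcal{Y}$ of $G$, and then to apply an $\ell$-adic equidistribution argument to the Galois image in $G(\QQ_\ell)$ to conclude that $\rho_{A\times A',\ell}(\Frob_\p)$ falls in $\mathcal{Y}(\QQ_\ell)$ for only a density-zero set of primes.

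For the first step, observe that the defining polynomial
\[
F(t):=\prod_{\substack{\alpha\neq\beta\\ \alpha,\beta\in\Omega_{A,\ell}\cup\Omega_{A',\ell}}}\bigl(\alpha(t)-\beta(t)\bigr)
\]
of $\mathcal{Z}$ on $T$ is $W$-invariant, as $W$ stabilizes the set $\Omega_{A,\ell}\cup\Omega_{A',\ell}$, and $F$ is nonzero because $\mathcal{Z}\subsetneq T$. By the group version of Chevalley's restriction theorem (applicable since $G$ is connected reductive over $\QQ_\ell$, using $K^\conn_{A\times A'}=K$ together with Theorem \ref{bgAGell}(c)), the restriction map $\mathcal{O}(G)^G\to\mathcal{O}(T)^W$ is an isomorphism, so $F$ lifts to a nonzero $G$-invariant regular function $\tilde F$ on $G$. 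Setting $\mathcal{Y}:=V(\tilde F)\subseteq G$ yields a proper closed conjugation-invariant $\QQ_\ell$-subvariety satisfying $\mathcal{Y}\cap T=\mathcal{Z}$ and $\tilde F|_T=F$.

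For the density step, let $H:=\rho_{A\times A',\ell}(\Gal_K)$. By Bogomolov's theorem on images of $\ell$-adic Galois representations of abelian varieties, $H$ is an open subgroup of $G(\QQ_\ell)$. Hence $\mathcal{Y}(\QQ_\ell)\cap H$ is a closed $\QQ_\ell$-analytic subset of $H$ of strictly smaller dimension, and so has Haar measure zero in $H$. The Chebotarev density theorem implies that the Frobenius conjugacy classes $\rho_{A\times A',\ell}(\Frob_\p)$ are equidistributed in $H$ with respect to the normalized Haar measure, so the density of $\p$ with $\rho_{A\times A',\ell}(\Frob_\p)\in\mathcal{Y}(\QQ_\ell)$ is zero; for any other $\p$, $\tilde F(\rho_{A\times A',\ell}(\Frob_\p))\neq 0$, and by $G$-invariance of $\tilde F$ together with the fact that $t_\p$ is $G(\overline{\QQ}_\ell)$-conjugate to $\rho_{A\times A',\ell}(\Frob_\p)$, we obtain $F(t_\p)=\tilde F(t_\p)\neq 0$, which is the desired non-collision. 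The main technical obstacle is making this last equidistribution step rigorous: one must approximate $\mathcal{Y}(\QQ_\ell)\cap H$ from outside by open $\ell^n$-neighborhoods $U_n\subseteq H$ (each a union of cosets of some open normal subgroup $H_n\triangleleft H$), apply classical Chebotarev on the finite quotient $H/H_n$ to bound the density of $\{\p:\rho_{A\times A',\ell}(\Frob_\p)\in U_n\}$ by the Haar measure of $U_n$, and then let $n\to\infty$ so that these measures tend to zero.
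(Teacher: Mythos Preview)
Your argument is correct and follows essentially the same route as the paper. The paper phrases the lift of $\mathcal{Z}$ via the Steinberg quotient $G^{\#}=\Spec(\QQ_\ell[G]^G)\cong T/\!\!/W$ rather than invoking Chevalley restriction explicitly, and it cites Serre \cite[\S 2.2, Corollary 2(b)]{MR1484415} directly for the density-zero conclusion instead of spelling out the Haar-measure approximation; but these are the same ideas in different packaging.
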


\begin{proof}
Note that $G$ acts on the coordinate algebra $\mathcal{A}=\QQ_\ell[G]$ by composing with conjugation, and $\mathcal{A}^G$ is the set of central functions of $G$. Define $G^{\#}:=\text{Spec}(\mathcal{A}^G)$; it is the variety of semisimple conjugacy classes of $G$. 
Denote the natural projection by $\text{cl}:G\to G^{\#}$; it satisfies the property that 
for $g_1,g_2\in G(\overline{\QQ}_\ell)$, $\text{cl}(g_1)=\text{cl}(g_2)$ if and only if $(g_1)_{s}$ and $(g_2)_{s}$ are conjugate in $G(\overline{\QQ}_\ell)$ (recall that $g_s$ is the semisimple component in the multiplicative Jordan decomposition of $g\in G$). 
Furthermore,
for $t_1,t_2\in T(\overline{\QQ}_\ell)$, $\text{cl}(t_1)=\text{cl}(t_2)$ if and only if $w(t_1)=t_2$ for some $w\in W$.
The map $\text{cl}|_T:T\to G^{\#}$ is dominant and $G^{\#}$ can be identified as a quotient of $T$ (often denoted by $T/\!/W$). The subvariety $\mathcal{Z}_{\overline{\QQ}_\ell}$ of $T_{\overline{\QQ}_\ell}$ is stable under the action of $W$ and thus $\mathfrak{Z}=\text{cl}(\mathcal{Z})$ is a subvariety of $G^{\#}$ which is defined over $\QQ_\ell$. 
Define $\mathfrak{V}:=\{B\in G:\ \text{cl}(B)\in \mathfrak{Z}\}$; it is a subvariety of $G$ with dimension strictly less than $\dim G$ and stable under conjugation by $G$.  

Recall that $\rho_{A\times A',\ell}(\Gal_K)$ is open in $G(\QQ_\ell)$. 
Chebotarev's density theorem \cite[\S 2.2, Corollary 2(b)]{MR1484415} then implies that for almost all $\p\in \Sigma_K$, we have $\rho_{A\times A',\ell}(\Frob_\p)\not\in \mathfrak{V}(\QQ_\ell)$ and hence $t_\p\not\in \mathcal{Z}({\overline{\QQ}_\ell})$. Therefore, for almost all $\p\in\Sigma_K$, we have $\alpha(t_\p)-\beta(t_\p)\not= 0$ for all distinct $\alpha,\beta\in \Omega_{A,\ell}\cup \Omega_{A',\ell}$. 
\end{proof} 

Lemma \ref{disjointweight} says that the sets $\Omega_{A,\ell}$ and $\Omega_{A',\ell}$ are disjoint. 
So by Lemma \ref{Z}, $\{\alpha(t_\p):\alpha\in\Omega_{A,\ell}\}\cap\{\beta(t_\p):\beta\in\Omega_{A',\ell}\}=\emptyset$ for almost all $\p\in\Sigma_K$. So, the set of roots of $P_{A,\p}(x)$ and $P_{A',\p}(x)$ in $\overline{\QQ}_\ell$ are disjoint for almost all $\p\in \Sigma_K$. Therefore, the polynomials $P_{A,\p}(x)$ and $P_{A',\p}(x)$ are relatively prime for almost all $\p\in\Sigma_K$.

\section{Proof of Theorem \ref{mainthm}}\label{proofofthm}
Suppose $A$ is a square-free abelian variety defined over a number field $K$ with $K_A^\conn=K$.
Since $A$ is square-free, it is isogenous to a product $\prod_{i\in I}B_i$, where the $B_i$ are pairwise non-isogenous simple abelian varieties defined over $K$. Let $A'$ be an abelian variety over $K$ for which there exists a density $1$ set $S$ of prime ideals of $\Sigma_K$ 
and a density $1$ set $\Lambda$ of rational primes
such that 
\[
\rad_\Lambda |A(\FF_\p)|=\rad_\Lambda |A'(\FF_\p)|
\]
for all $\p\in S$. In particular, note that we are not yet assuming that $A'$ is square-free.

The following 
proposition which we will prove in \S\ref{pfoffinaleone}, says that $A'$ is isogenous to a product of simple factors of $A$ over an explicit extension of $K$. 

\begin{proposition}\label{finaleone}
The abelian variety
$A'$ isogenous to $\prod_{i\in I} B_i^{e_i}$ over $K_{A\times A'}^\conn$ for some $e_i\geq 1$.
\end{proposition}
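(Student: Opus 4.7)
The plan is to base-change everything to $L := K_{A\times A'}^\conn$, apply Proposition~\ref{onepttwo} (which requires exactly the connectedness we arrange by passing to $L$) to obtain an equality of radicals of Frobenius polynomials, and then use Theorem~\ref{fourptone} together with the simplicity and pairwise non-isogeny of the $(B_i)_L$ to match up simple factors.

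First I would verify that $A_L \sim \prod_{i\in I}(B_i)_L$ is a decomposition into simple, pairwise non-isogenous factors over $L$. Simplicity is Lemma~\ref{endA}(b) applied to each $B_i$, using $K_A^\conn = K$. For pairwise non-isogeny, the key observation is that $B_i\times B_{i'}$ is an isogeny factor of $A$, so its $\ell$-adic monodromy is the image (under a projection) of the connected group $G_{A,\ell}$ and therefore connected; hence $K_{B_i\times B_{i'}}^\conn = K$, and Lemma~\ref{endA}(a) gives
\[
\End((B_i\times B_{i'})_{\overline K})\otimes_\ZZ \QQ = \End(B_i\times B_{i'})\otimes_\ZZ\QQ = \End(B_i)\otimes_\ZZ\QQ \oplus \End(B_{i'})\otimes_\ZZ\QQ,
\]
which forces $(B_i)_{\overline K}$ and $(B_{i'})_{\overline K}$, and therefore $(B_i)_L$ and $(B_{i'})_L$, to be non-isogenous.

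Next, a variant of Lemma~\ref{densityone} transports the equality $\rad_\Lambda|A(\FF_\p)| = \rad_\Lambda|A'(\FF_\p)|$ from a density-one set of $\p\in \Sigma_K$ to a density-one set of $\mathfrak P\in\Sigma_L$. Since $L_{A_L\times A'_L}^\conn = L$ by construction, I would apply Proposition~\ref{onepttwo} in both directions (each divisibility of radicals holds by the equality hypothesis) to conclude
\[
\rad P_{A_L,\mathfrak P}(x) = \rad P_{A'_L,\mathfrak P}(x)
\]
for almost all $\mathfrak P$. Writing $A'_L \sim \prod_k C_k^{f_k}$ with the $C_k$ pairwise non-isogenous simple abelian varieties over $L$, this reads
\[
\prod_{i\in I}\rad P_{(B_i)_L,\mathfrak P}(x) = \prod_k \rad P_{C_k,\mathfrak P}(x).
\]

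To match factors, I would invoke Theorem~\ref{fourptone}: for any pair $X,Y$ of non-isogenous simple factors chosen from $\{(B_i)_L\}\cup\{C_k\}$, the product $X\times Y$ is an isogeny factor of $A_L\times A'_L$, and its monodromy group is connected as an image of the connected group $G_{A_L\times A'_L,\ell}$; thus $L_{X\times Y}^\conn = L$ and the Frobenius polynomials $P_{X,\mathfrak P}(x)$ and $P_{Y,\mathfrak P}(x)$ are coprime for almost all $\mathfrak P$. If some $C_k$ were not isogenous to any $(B_i)_L$, then $\rad P_{C_k,\mathfrak P}(x)$ would be coprime to each factor of the left-hand product yet divide it, forcing $\rad P_{C_k,\mathfrak P}(x)=1$, which is impossible since $C_k$ has positive dimension. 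A symmetric argument shows every $(B_i)_L$ is isogenous to some $C_k$. Hence the two families coincide up to isogeny and $A'_L\sim \prod_{i\in I}(B_i)_L^{e_i}$ for some integers $e_i\geq 1$, as required.

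The main obstacle I anticipate is the bookkeeping of the various density-one subsets in play (the primes of $S$, the primes where Theorem~\ref{fourptone} delivers coprimality for every relevant pair of simple factors, and the primes of $L$ supplied by the Lemma~\ref{densityone}-type descent) together with checking that all the needed connectedness conditions $L_{X\times Y}^\conn = L$ hold for the simple factors involved. Once the simplicity and pairwise non-isogeny of the $(B_i)_L$ are established and the radical equality is upgraded to $L$, the final combinatorial matching of simple factors is forced by coprimality.
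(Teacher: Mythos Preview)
Your proposal is correct and follows essentially the same route as the paper: reduce to the connected case via Lemma~\ref{densityone}, apply Proposition~\ref{onepttwo} in both directions to obtain $\rad P_{A,\p}(x)=\rad P_{A',\p}(x)$, and then use Theorem~\ref{fourptone} to match simple factors. You are in fact slightly more careful than the paper in justifying that the $(B_i)_L$ remain pairwise non-isogenous (the paper only notes simplicity via Lemma~\ref{endA}(b) and leaves non-isogeny implicit), and your observation that $L_{X\times Y}^\conn=L$ for any pair of simple factors of $A_L\times A'_L$ is exactly what is needed to invoke Theorem~\ref{fourptone}.
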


The following proposition says that $K_{A\times A'}^\conn$ is in fact $K$; we will give a proof in \S\ref{pfoffinaletwo}.

\begin{proposition}\label{finaletwo}
We have $K_{A\times A'}^\conn=K$.
\end{proposition}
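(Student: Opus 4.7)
Set $A'' := \prod_{i \in I} B_i^{e_i}$, regarded as an abelian variety over $K$. Since $V_\ell(A'') = \bigoplus_i V_\ell(B_i)^{e_i}$ has the same image of $\Gal_K$ as $V_\ell(A) = \bigoplus_i V_\ell(B_i)$, we have $K_{A''}^\conn = K_A^\conn = K$. The plan is to prove the stronger statement that $A' \sim A''$ over $K$; granting this, $V_\ell(A \times A') \simeq V_\ell(A \times A'')$ as $\Gal_K$-representations, so $G_{A \times A',\ell} = G_{A \times A'',\ell}$ is connected and hence $K_{A \times A'}^\conn = K$.

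By Faltings (Theorem~\ref{bgAGell}(a)) and the Chebotarev density theorem, it suffices to show $P_{A',\p}(x) = P_{A'',\p}(x)$ for a density one set of primes $\p \in \Sigma_K$. Two partial inputs are available. Proposition~\ref{finaleone} provides a $\Gal_L$-equivariant isomorphism $V_\ell(A') \simeq V_\ell(A'')$, which yields $P_{A',\p}(x) = P_{A'',\p}(x)$ for every prime $\p$ that splits completely in $L$. Separately, combining the hypothesis with Proposition~\ref{onepttwo} applied in both directions, Theorem~\ref{seppower} applied to each $B_i$, and Theorem~\ref{fourptone} applied to each pair $\{B_i, B_j\}$ with $i \neq j$ (all valid since $K_{B_i}^\conn = K$ follows from $K_A^\conn = K$ and the surjection $G_{A,\ell} \surjects G_{B_i,\ell}$), one obtains for almost all $\p$ that
\[
\rad P_{A',\p}(x) = \rad P_{A'',\p}(x) = \prod_{i \in I} q_{i,\p}(x),
\]
where the $q_{i,\p}$ are separable and pairwise coprime.

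To bridge these, I would apply Clifford theory to $V_\ell(A')$ relative to the normal subgroup $\Gal_L$ of $\Gal_K$. Write $V_\ell(B_i) \simeq W_i^{m_i}$ with $W_i$ an irreducible $\Gal_K$-representation; the condition $K_{B_i}^\conn = K$ ensures that $\rho_{B_i,\ell}(\Gal_K)$ and $\rho_{B_i,\ell}(\Gal_L)$ have the same Zariski closure, so $W_i|_{\Gal_L}$ is still irreducible. The $\Gal_L$-isomorphism $V_\ell(A') \simeq V_\ell(A'')$ then forces a $\Gal_K$-decomposition
\[
V_\ell(A') \simeq \bigoplus_{i \in I} \bigoplus_{\chi \in \widehat{\Gal(L/K)}} (W_i \otimes \chi)^{n_{i,\chi}}
\]
with $\sum_\chi n_{i,\chi} = e_i m_i$ for every $i$, while $V_\ell(A'')$ corresponds to placing all weight at $\chi = 1$. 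Thus $V_\ell(A') \simeq V_\ell(A'')$ as $\Gal_K$-representations precisely when every nontrivial character $\chi$ has $n_{i,\chi} = 0$.

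The main obstacle is ruling out nontrivial characters. If some nontrivial $\chi$ had $n_{i,\chi} > 0$, then by Chebotarev a positive density of primes $\p$ would satisfy $\chi(\Frob_\p) = \omega$ for a fixed nontrivial root of unity $\omega$; for such $\p$ the Frobenius eigenvalues of $\rho_{A',\ell}(\Frob_\p)$ would include $\omega \alpha$ for each eigenvalue $\alpha$ of $\Frob_\p$ on $W_i$. The radical equality forces each such $\omega \alpha$ to be a root of some $q_{i',\p}(x)$, i.e.\ a Frobenius eigenvalue of $V_\ell(A'')$ at $\p$. I would obtain the desired contradiction by restricting to a density one subset of primes on which the Frobenius eigenvalues of the $B_i$'s admit no nontrivial multiplicative relations with $|\Gal(L/K)|$-th roots of unity; the exceptional primes should be confined to a proper closed subvariety of a maximal torus of $G_{A,\ell}$, analogous to the subvariety $\mathcal{Z}$ constructed in Section~\ref{secondpart}. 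This rules out all nontrivial twists, giving $V_\ell(A') \simeq V_\ell(A'')$ as $\Gal_K$-representations and completing the proof.
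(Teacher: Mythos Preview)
There is a genuine circularity in your argument. Proposition~\ref{onepttwo}, which you invoke to obtain $\rad P_{A',\p}(x) = \rad P_{A'',\p}(x)$ for primes $\p$ of $K$, is stated under the standing hypothesis of \S\ref{firstpart} that $K_{A\times A'}^\conn = K$ --- precisely the conclusion of Proposition~\ref{finaletwo}. Via Lemma~\ref{densityone} one only knows the polynomial radical equality for primes of $L = K_{A\times A'}^\conn$. But your contradiction needs it at primes $\p$ of $K$ with $\chi(\Frob_\p) = \omega \neq 1$, and by construction these are exactly the primes that do \emph{not} split completely in $L$. So the step ``the radical equality forces each such $\omega\alpha$ to be a root of some $q_{i',\p}(x)$'' is unjustified, and the final paragraph collapses.

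The paper avoids this trap by never appealing to the polynomial radical equality over $K$; it works directly with the hypothesis $\rad_\Lambda |A(\FF_\p)| = \rad_\Lambda |A'(\FF_\p)|$. Assuming $L \neq K$, it observes that the isogeny over $L$ forces $G^\circ_{C\times A',\ell} = \{(B,B) : B \in G_{C,\ell}\}$, then picks a nontrivial coset representative $(I,z)$ with $z \neq I$ central of finite order. Combined with the Larsen--Pink fact that for a density one set of $\q$ the roots of $P_{C,\q}$ generate a torsion-free subgroup of $\overline{\QQ}^\times$, this yields a prime $\q$ and $\pi \in \overline{\QQ}$ with $P_{A',\q}(\pi) = 0$ but $P_{C,\q}(\pi) \neq 0$. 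The key move is then to \emph{convert this into a statement about values at $1$}: using $\mathbb{G}_m \subseteq H_\ell$ and the bounded index of $\overline{\rho}_{C\times A',\ell}(\Gal_K)$ in $H_\ell(\FF_\ell)$, one reduces modulo a well-chosen $\ell \in \Lambda$, scales $\overline{\rho}_{C\times A',\ell}(\Frob_\q)$ by $\pi^{-1}$ to land back in the Galois image, and applies Chebotarev to produce $\p \in S$ with $\ell \mid |A'(\FF_\p)|$ but $\ell \nmid |C(\FF_\p)|$, directly contradicting the radical-of-values hypothesis. Your Clifford-theoretic setup and your ``no multiplicative relations with roots of unity'' input are morally the same ingredients as the paper's coset-plus-Larsen--Pink argument, but the bridge from ``$\omega\alpha$ is a root of $P_{A',\q}$ and not of $P_{A'',\q}$'' to a contradiction must pass through $\rad_\Lambda |A(\FF_\p)|$, not through $\rad P_{A,\p}(x)$.

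A smaller point: in the Clifford decomposition the twists of $W_i$ should be by arbitrary irreducible representations of $\Gal(L/K)$, not just characters, since there is no reason for $\Gal(L/K)$ to be abelian.
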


Propositions \ref{finaleone} and \ref{finaletwo} imply that $A'$ is isogenous to $\prod_{i\in I} B_i^{e_i}$ over $K_{A\times A'}^\conn=K$ with $e_i\geq 1$. 
Finally, if we further assume that $A'$ is square-free, we deduce that all the $e_i=1$ and hence $A'$ is isogenous to $A$ over $K$. 
This completes the proof of Theorem \ref{mainthm}.

\subsection{Proof of Proposition \ref{finaleone}}\label{pfoffinaleone}
\begin{lemma}\label{densityone}
To prove Proposition \ref{finaleone}, it suffices to prove it in the case where $K_{A\times A'}^\conn=K$.
\end{lemma}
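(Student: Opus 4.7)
The plan is to set $L := K_{A\times A'}^\conn$, base change $A$ and $A'$ to $L$, verify that all running hypotheses of Proposition~\ref{finaleone} transfer, and then apply the assumed special case of the proposition. First I would verify the three ambient hypotheses over $L$: (i) $A_L$ remains square-free and its simple factors are precisely $(B_i)_L$, using Lemma~\ref{endA}(b) together with $K_A^\conn = K$ to ensure that each $B_i$ stays simple after arbitrary base change inside $\overline{K}$; (ii) $L_{A_L}^\conn = L$, since $L \supseteq K_A^\conn = K$ and enlarging the base field cannot break the connectedness of any $\ell$-adic monodromy group; and (iii) $L_{A_L\times A'_L}^\conn = L$, which is precisely the defining property of $L$ combined with Proposition~\ref{KAconn}.

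The substantive step is transferring the hypothesis $\rad_\Lambda|A(\FF_\p)| = \rad_\Lambda|A'(\FF_\p)|$ from a density-one set $S \subseteq \Sigma_K$ to a density-one subset of $\Sigma_L$. For a prime $\mathfrak{P} \in \Sigma_L$ with $\p := \mathfrak{P}\cap \mathcal{O}_K$, we have $|A_L(\FF_\mathfrak{P})| = |A(\FF_\p)|$ exactly when $\FF_\mathfrak{P} = \FF_\p$, i.e.\ when the residue degree $f(\mathfrak{P}/\p)$ equals $1$, and analogously for $A'$. I would invoke two standard density facts: almost all primes of a number field have residue degree $1$ over $\QQ$ (so in particular over $K$), and the set of primes of $L$ lying over a density-one subset of $\Sigma_K$ has density $1$ in $\Sigma_L$. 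Intersecting, the set of $\mathfrak{P}\in \Sigma_L$ with $\mathfrak{P}\cap\mathcal{O}_K \in S$ and $f(\mathfrak{P}/\QQ)=1$ has density $1$, and on this set the given radical equality over $K$ yields $\rad_\Lambda|A_L(\FF_\mathfrak{P})| = \rad_\Lambda|A'_L(\FF_\mathfrak{P})|$ verbatim.

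With these hypotheses in place over $L$, the assumed case of Proposition~\ref{finaleone} applied to $A_L$ and $A'_L$ produces an $L$-isogeny from $A'_L$ to $\prod_{i\in I}(B_i)_L^{e_i}$ for some integers $e_i\geq 1$, which is exactly the statement that $A'$ is isogenous over $L = K_{A\times A'}^\conn$ to $\prod_{i\in I} B_i^{e_i}$. The only step requiring genuine input beyond bookkeeping is the density-transfer argument, and even that reduces to elementary facts about residue degrees together with the observation that $S$ pulls back to a density-one subset of $\Sigma_L$; the preservation of square-freeness and of connectedness under base change to $L$ is built into the definitions of $K_A^\conn$ and $K_{A\times A'}^\conn$, so I do not anticipate any serious obstacle.
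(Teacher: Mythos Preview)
Your proposal is correct and follows essentially the same approach as the paper: set $L=K_{A\times A'}^\conn$, use Lemma~\ref{endA}(b) to check that $A_L$ stays square-free, observe that $L_{A_L\times A'_L}^\conn=L$, and transfer the radical equality to a density-one set of $\mathfrak{P}\in\Sigma_L$ by restricting to primes with residue degree one over $K$. The paper carries out the density step with an explicit counting inequality rather than invoking the two ``standard density facts'' you cite, but the content is the same; if anything, you are slightly more explicit than the paper in separately noting that one must also arrange $\mathfrak{P}\cap\mathcal{O}_K\in S$.
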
 
\begin{proof}
Set $L=K_{A\times A'}^\conn$. 
Note that $A_L$ is square-free since the $B_i$ are simple over $\overline{K}$ (and hence also over $L$) by Lemma \ref{endA}(b).
The $\ell$-adic monodromy groups of $A_L \times A'_L$ are connected.
We need only show that 
\[
\rad_\Lambda |A(\FF_\mathfrak{P})|=\rad_\Lambda |A'(\FF_\mathfrak{P})|
\]
for a density $1$ set $S'$ of $\mathfrak{P}\in \Sigma_L$, since then Proposition \ref{finaleone} (with the assumption $K_{A\times A'}^\conn=K$) would imply that $A'_L$ is isogenous to $\prod_{i\in I} (B_i)_L^{e_i}$ for some $e_i\geq 1$. 

For a density one set of $\mathfrak{P}\in\Sigma_L$, the inertia degree $f(\mathfrak{P}/\p)$ of $\mathfrak{P}$ over $\p:=\mathfrak{P}\cap \mathcal{O}_K\in \Sigma_K$ is $1$.
Indeed, we have
\[
\sum_{\mathfrak{P}\in \Sigma_L, N(\mathfrak{P})\leq x, f(\mathfrak{P}/\mathfrak{P}\cap \mathcal{O}_K)\geq 2}\  1 
\leq [L:K]\sum_{\p\in \Sigma_K, N(\p)\leq \sqrt{x}}\ 1
\leq 2[L:K][K:\QQ]\pi(\sqrt{x})
\leq [L:\QQ]\sqrt{x}
\]
where $N$ is the norm.
Note that when $f(\mathfrak{P}/\p)=1$, we have $\FF_\mathfrak{P}=\FF_\p$ and hence $|A(\FF_\mathfrak{P})|=|A(\FF_\p)|$. Similarly, $|A'(\FF_\mathfrak{P})|=|A'(\FF_\p)|$.
Hence, by our assumption that $\rad_\Lambda |A(\FF_\p)|=\rad_\Lambda |A'(\FF_\p)|$ for all $\p\in S$, we have
\[
\rad_\Lambda |A(\FF_\mathfrak{P})|=\rad_\Lambda |A(\FF_\p)|=\rad_\Lambda |A'(\FF_\p)|=\rad_\Lambda |A(\FF_\mathfrak{P})|
\]
for almost all $\mathfrak{P}\in \Sigma_L$.
\end{proof}

By Lemma \ref{densityone}, we may assume that $K_{A\times A'}^\conn=K$. 
By assumption, we have $\rad_\Lambda |A(\FF_\p)|=\rad_\Lambda |A'(\FF_\p)|$
for all $\p\in S$. 
By applying Proposition \ref{onepttwo} twice,  
we deduce that 
$\rad P_{A,\p}(x)=\rad P_{A',\p}(x)$ for all $\p\in S$.

The abelian variety $A'$ is isogenous to $\prod_{j\in J} {B'_j}^{e_j}$, 
where the $B'_j$ are pairwise non-isogenous simple abelian varieties defined over $K$ and $e_j\geq 1$. 

Suppose there exists $i\in I$ such that $B_i$ is not isogenous to any $B'_j$. 
Theorem \ref{fourptone} implies there is a prime $\p\in S$ such that $\rad P_{B_i,\p}(x)$ is relatively prime to $\rad P_{B'_j,\p}(x)$ for all $j\in J$. Since 
\[
\rad P_{A',\p}(x)=\rad(\prod_{j\in J} P_{B'_j}(x)^{e_j})=\rad(\prod_{j\in J} P_{B'_j}(x)),
\]
 we deduce that $\rad P_{B_i,\p}(x)$ is relatively prime to $\rad P_{A',\p}(x)$. 
This contradicts that $\rad P_{B_i,\p}(x)$ divides $\rad P_{A,\p}(x)=\rad P_{A',\p}(x)$. Hence, we conclude that for each $i\in I$, there exists $j\in J$ such that $B_i$ is isogenous to $B'_j$; such a $j\in J$ is unique since the $B'_j$ are pairwise non-isogenous.  
By a similar argument, for each $j\in J$, there exists a unique $i\in I$ such that $B_i$ is isogenous to $B'_j$.
So there is a bijection $f:I\to J$ such that $B_i$ is isogenous to $B'_{f(i)}$ for all $i\in I$.
Therefore, $A'$ is isogenous to $\prod_{i\in I} B_i^{e_{f(i)}}$.
The proof of Proposition \ref{finaleone} is now complete. 

\subsection{Proof of Proposition \ref{finaletwo}}\label{pfoffinaletwo}
Set $L:=K_{A\times A'}^\conn$.
Suppose $L\not=K$; we want a contradiction.

By Proposition \ref{finaleone}, there exists an isogeny 
\[
\phi: A'_L \to C_L
\]
defined over $L$, where $C:=\prod_{i\in I} (B_i)^{e_i}$ is an abelian variety over $K$ for some $e_i\geq 1$. 
Since $A$ and $C$ have the same simple factors, up to isogeny, we find that the algebraic groups $G_{A,\ell}$ and $G_{C,\ell}$ are isomorphic. Therefore, $G_{C,\ell}$ is connected by the assumption $K_A^\conn = K$. 

\begin{lemma}\label{qandpi}
There exists a prime ideal $\q\in S$ and an algebraic number $\pi\in \overline{\QQ}$ such that 
$P_{A',\q}(\pi)=0$ and $P_{C,\q}(\pi)\not=0$.
\end{lemma}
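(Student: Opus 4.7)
My plan is to first show that $A'$ is not $K$-isogenous to $C$, and then use Faltings' isogeny theorem together with the $L$-isogeny $A'_L \sim C_L$ to produce the required prime $\q$ and algebraic number $\pi$.

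For the non-$K$-isogeny, suppose toward contradiction that $A' \sim_K C$. Since $C=\prod_i B_i^{e_i}$ is built from the same simple $K$-factors $B_i$ of $A$, and these are absolutely simple and pairwise absolutely non-isogenous by Lemma \ref{endA}, the Tate module $V_\ell(A\times C)$ decomposes over $\overline{\QQ}_\ell$ into distinct isotypic components indexed by the $B_i$, on which $\Gal_K$ acts diagonally. Hence the first projection $G_{A\times C,\ell}\to G_{A,\ell}$ is an isomorphism, so $G_{A\times C,\ell}$ is connected (using $K_A^\conn = K$), i.e., $K_{A\times C}^\conn = K$. Because $K$-isogeny preserves the $\ell$-adic monodromy group, we would then get $K_{A\times A'}^\conn = K_{A\times C}^\conn = K$, contradicting $L\neq K$.

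By Faltings' isogeny theorem, the non-$K$-isogeny $A'\not\sim_K C$ gives a positive-density set of primes $\q\in S$ where $P_{A',\q}(x)\neq P_{C,\q}(x)$. To upgrade this to the existence of $\pi$ separating the root sets, I would combine it with the following input supplied by Proposition \ref{finaleone}: for any $\q\in S$ and any prime $\mathfrak Q$ of $L$ above $\q$ with residue degree $f$, the $L$-isogeny $A'_L \sim C_L$ forces $P_{A'_L,\mathfrak Q}(x)=P_{C_L,\mathfrak Q}(x)$, i.e., the multisets of $f$-th powers of the roots of $P_{A',\q}$ and $P_{C,\q}$ coincide. This pairs each root of $P_{A',\q}$ with a root of $P_{C,\q}$ differing only by an $f$-th root of unity. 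Restricting to $\q$ that do not split completely in $L/K$ (a positive-density condition by Chebotarev, since $L\neq K$), at least one such pairing must involve a nontrivial root of unity; otherwise the root multisets coincide identically, which together with Faltings would again force $A'\sim_K C$.

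The technical obstacle is that a root $\zeta\beta$ of $P_{A',\q}$ arising from a nontrivial $f$-th-root-of-unity twist of a root $\beta$ of $P_{C,\q}$ could accidentally coincide with some \emph{other} root of $P_{C,\q}$. To rule this out, I would impose the additional Chebotarev-style generic condition on $\q$ that $\rho_{A\times A',\ell}(\Frob_\q)$ avoids the proper closed subvariety of the semisimple conjugacy-class quotient $G_{A\times A',\ell}/\!/\operatorname{conj}$ consisting of those classes whose eigenvalue multisets on $V_\ell(A')$ and $V_\ell(C)$ satisfy the full set-theoretic containment $\operatorname{roots}(P_{A',\q}) \subseteq \operatorname{roots}(P_{C,\q})$; this is set up in direct analogy with the variety $\mathcal Z$ from Section \ref{pfoffourptone}. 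Verifying this is a \emph{proper} subvariety is the key step and uses the explicit structure of the non-trivial Galois twist between $V_\ell(A')$ and $V_\ell(C)$ coming from the nontriviality of $\Gal(L/K)$ acting on the isogeny $\phi$.
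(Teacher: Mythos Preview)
Your overall strategy is reasonable, and your step 1 (showing $A'\not\sim_K C$ via $K^{\conn}_{A\times C}=K$) is correct. But the heart of the lemma is your step~5, and there you have only a plan, not an argument. Two concrete problems:

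First, the analogy with the variety $\mathcal Z$ of \S\ref{pfoffourptone} breaks down. There the monodromy group is assumed connected, so a proper closed subvariety is avoided by a density-$1$ set of Frobenii. Here $G_{C\times A',\ell}$ is \emph{disconnected}, and your ``bad'' locus (where $\operatorname{roots}(P_{A',\q})\subseteq\operatorname{roots}(P_{C,\q})$) contains the entire identity component, since on $G^\circ$ the isogeny $\phi$ forces $P_{A',\q}=P_{C,\q}$. So you must show the bad locus is proper on some \emph{non-identity} component, and you have not said how.

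Second, this is precisely where the paper inserts the ingredient you are missing. The paper works directly inside $G_{C\times A',\ell}$: it identifies $G^\circ$ with the diagonal $\{(B,B):B\in G_{C,\ell}\}$ via $\phi$, and shows that any nontrivial coset has the form $(I,z)\cdot G^\circ$ with $z\neq I$ a \emph{central} element of \emph{finite order} in $\GL_{V_\ell(C)}$. Choosing $\q$ with Frobenius in this coset, one gets a root $\pi$ of $P_{A',\q}$ and a root $\zeta\pi$ of $P_{C,\q}$ with $\zeta\neq 1$ a root of unity. The ``accidental coincidence'' you worry about is then ruled out by Larsen--Pink \cite[Cor.~1.4]{MR1441234}: since $G_{C,\ell}$ is connected, for density-$1$ many $\q$ the roots of $P_{C,\q}$ generate a torsion-free subgroup of $\overline{\QQ}^\times$, so $\pi$ and $\zeta\pi$ cannot both be roots of $P_{C,\q}$. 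Your detour through Faltings and residue degrees $f$ recovers the root-of-unity twist, but in a less structured way (you don't get that $z$ is central), and without the torsion-free input you cannot close the argument. Your step~4 logic is also slightly off: a single $\q$ with all trivial pairings gives only $P_{A',\q}=P_{C,\q}$, which does not by itself force a $K$-isogeny; you should instead start from the positive-density set of $\q$ with $P_{A',\q}\neq P_{C,\q}$ that you already obtained in step~2.
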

\begin{proof}
Fix a prime $\ell$. Set $G=G_{C\times A',\ell}$. 
We can view $G$ as a closed algebraic subgroup of $G_{C,\ell}\times G_{A',\ell}$. 
The isogeny $\phi$ induces an isomorphism $V_\ell(A'_L)\isomto V_\ell(C_L)$ of $\QQ_\ell[\Gal_L]$-modules. Using this isomorphism as an identification, we can assume that  
\[
\rho_{C\times A',\ell}(\sigma)=(\rho_{C,\ell}(\sigma),\rho_{A',\ell}(\sigma))=(\rho_{C,\ell}(\sigma),\rho_{C,\ell}(\sigma))
\]
for all $\sigma\in \Gal_L$.
Since $G_{(C\times A')_L,\ell}$ is the Zariski closure of $\rho_{C\times A',\ell}(\Gal_L)$, we have
$G_{(C\times A')_L,\ell}=\{(B,B):B\in G_{C,\ell}\}$; 
note that $G^\circ=G_{(C\times A')_L,\ell}$ since $G_{C,\ell}$ is connected. Therefore, 
\[
G^\circ = \{(B,B):\ B\in G_{A,\ell}\}.
\] 

By our assumption $L\not=K$, we have $G^\circ \subsetneq G$. So there exists a pair $(g,g')\in G(\QQ_\ell)\backslash G^\circ(\QQ_\ell)$ with $g\not= g'$. Since $(g^{-1},g^{-1})\in G^\circ(\QQ_\ell)$, we have $(I,z)\in G(\QQ_\ell)\backslash G^\circ(\QQ_\ell)$ with $z:=g^{-1}g'\not=I$. 
Since $G(\QQ_\ell)/G^\circ(\QQ_\ell)$ is finite, there exists an integer $m$ such that $(I,z^m)=(I,z)^m\in G^\circ(\QQ_\ell)$ and so $I=z^m$, i.e., $z$ has finite order.
Moreover, since $G^\circ$ is a normal subgroup of $G$, for any pair $(g_0,g_0)\in G^\circ$, $(g_0,z^{-1}g_0z)=(I,z)^{-1}(g_0,g_0)(I,z)\in G^\circ$ and so $zg_0=g_0z$. Hence, $z$ commutes with $G_{C,\ell}$. The coset $(I,z)\cdot G^\circ$ of $G^\circ$ in $G$ is given by
\[
(I,z)\cdot G^\circ = \{(B,z\cdot B):\ B\in G_{C,\ell}\};
\]
it is not $G^\circ$ since $z\not= I$. 

Since $\rho_{C\times A',\ell}(\Gal_K)$ is Zariski dense in $G$ and open in $G(\QQ_\ell)$, 
the set 
\[
\{\sigma\in \Gal_K:\ \rho_{C,\ell}(\sigma)=z\cdot \rho_{A',\ell}(\sigma)\}\supseteq \rho_{C\times A',\ell}^{-1}(((I,z)\cdot G^\circ)(\QQ_\ell))
\]
is open in $\Gal_K$. 
By the Chebotarev density theorem, there exists a prime $\q\in \Sigma_K$ such that $\rho_{C,\ell}(\Frob_\q)=z\cdot \rho_{A',\ell}(\Frob_\q)$. Recall that $z\not=I$ has finite order and commutes with $\rho_{A',\ell}(\Frob_\q)$, so $z$ and $\rho_{C,\ell}(\Frob_\q)$ are simultaneously diagonalizable over $\overline{\QQ}_\ell$ and hence there exist a root $\pi\in \overline{\QQ}$ of $P_{A',\q}(x)$ such that $\zeta\pi\in \overline{\QQ}$ is a root of $P_{C,\q}(x)$ for some root of unity $\zeta\not=1$ in $\overline{\QQ}$. By Larsen and Pink \cite[Corollary 1.4]{MR1441234}, we can further assume that $\q$
is chosen so that the roots of $P_{C,\q}(x)$ in $\overline{\QQ}^\times$ generates a torsion-free group (such $\q$ have density $1$ since the group $G_{C,\ell}$ is connected). 
So, in particular $\pi$ is not a root of $P_{C,\q}(x)$ (if it was, then the subgroup of $\overline{\QQ}^\times$ generated by the roots of $P_{C,\q}(x)$ contains 
$\zeta=(\zeta\pi)\cdot \pi^{-1}$ and hence has torsion). 

Therefore, there exists $\q\in \Sigma_K$ and $\pi\in \overline{\QQ}$ such that $P_{A',\q}(\pi)=0$ and $P_{C,\q}(\pi)\not=0$. 
\end{proof}

We now try to find a prime ideal $\p\in S$ and a prime $\ell\in \Lambda$ such that 
\[
P_{A',\p}(1)\equiv 0 \pmod{\ell}\quad \text{ and }\quad P_{C,\p}(1)\not\equiv 0 \pmod{\ell};
\]
this would then imply that $\rad_\Lambda |C(\FF_\p)| \not= \rad_\Lambda |A'(\FF_\p)|$. Since $\rad_\Lambda |A(\FF_\p)|=\rad_\Lambda |C(\FF_\p)|$, this would contradict our assumption that $\rad_\Lambda |A(\FF_\p)|=\rad_\Lambda|A'(\FF_\p)|$. 

Theorem \ref{bgHell}(b) says that $\mathbb{G}_m\subseteq H_\ell$ when $\ell$ is large enough. Moreover, Theorem \ref{bgHell}(a) says that there exists a number $M_{C\times A'}$ not depending on $\ell$ such that $[H_\ell(\FF_\ell):\overline{\rho}_{C\times A',\ell}(\Gal_K)]\leq M_{C\times A'}$ for all $\ell$. So it follows that there is an integer $m\geq 1$ such that 
\[
(\FF_\ell^\times)^m\cdot I\subseteq \overline{\rho}_{C\times A',\ell}(\Gal_K).
\] 
for all $\ell$.
Let $F$ be number field containing an $m$-th root $\pi^{1/m}$ of $\pi$. Let $\ell\in \Lambda$ be a prime that splits completely in $F$; such a prime exists since we assumed $\Lambda$ has density $1$. Take any $\lambda\in \Sigma_F$ such that $\lambda | \ell$; we have $\FF_\lambda=\FF_\ell$. Define $c$ to be the image of $\pi^{1/m}\in \mathcal{O}_F$ in $\FF_\lambda=\FF_\ell$. Without loss of generality, we assume $\ell\in \Lambda$ is chosen large enough so that $c\not= 0$ and $\q\nmid \ell$; note that the image of $\pi$ in $\FF_\lambda = \FF_\ell$ is $c^m$. 

Define
\[
Y:=(c^m)^{-1}\cdot \overline{\rho}_{C\times A',\ell}(\Frob_\q) = ((c^m)^{-1}\cdot \overline{\rho}_{C,\ell}(\Frob_\q),(c^m)^{-1}\cdot \overline{\rho}_{ A',\ell}(\Frob_\q)).
\]

We have $Y\in \overline{\rho}_{C\times A',\ell}(\Gal_K)$ since $c^m\in \overline{\rho}_{C\times A',\ell}(\Gal_K)$ by our choice of $m$.
Recall that we have $P_{A',\q}(\pi)=0$. So $P_{A',\q}(c^m)\equiv P_{A',\q}(\pi)\equiv 0\pmod{\lambda}$, i.e., $c^m$ is an eigenvalue of $\overline{\rho}_{A',\ell}(\Frob_\q)$. Hence, $(c^m)^{-1}\cdot \overline{\rho}_{ A',\ell}(\Frob_\q)$ has $1$ as an eigenvalue and we have $\det(I-(c^m)^{-1}\cdot \overline{\rho}_{ A',\ell}(\Frob_\q))=0$. 

Suppose that $\det(I-(c^m)^{-1}\cdot \overline{\rho}_{C,\ell}(\Frob_\q))=0$. Then $1$ is an eigenvalue of $(c^m)^{-1}\cdot \overline{\rho}_{C,\ell}(\Frob_\q)$ and $c^m$ would then be an eigenvalue of $\overline{\rho}_{C,\ell}(\Frob_\q)$. So, $P_{C,\q}(\pi)\equiv P_{A,\q}(c^m)\equiv 0 \pmod{\lambda}$, i.e., $\lambda$ divides $P_{C,\q}(\pi)\in \mathcal{O}_F$. Since $P_{C,\q}(\pi)\not=0$, this can only happen for finitely many $\ell\in \Lambda$. So we may assume that $\ell\in \Lambda$ is chosen large enough so that
$\det(I-(c^m)^{-1}\cdot \overline{\rho}_{ C,\ell}(\Frob_\q))\not=0$.

Recall that $Y\in \overline{\rho}_{C\times A',\ell}(\Gal_K)$, so by the Chebotarev density theorem, there exists a prime $\p\in S$ such that $Y=\overline{\rho}_{C\times A',\ell}(\Frob_\p)$. By our arguments above, we have chosen $\ell\in \Lambda$ and $\p\in S$ such that 
\[
P_{A',\p}(1)\equiv 0 \pmod{\ell}\quad \text{ and }\quad P_{C,\p}(1)\not\equiv 0 \pmod{\ell}.
\]
That is, $\ell\in \Lambda$ does not divide $|C(\FF_\p)|$ but divides $|A'(\FF_\p)|$. In particular, $\rad_\Lambda |C(\FF_\p)| \not= \rad_\Lambda |A'(\FF_\p)|$. Since $\rad_\Lambda |A(\FF_\p)|=\rad_\Lambda |C(\FF_\p)|$, this contradicts our assumption that $\rad_\Lambda |A(\FF_\p)|=\rad_\Lambda|A'(\FF_\p)|$.
We deduce that $L=K_{A\times A'}^\conn$ equals $K$. The proof of Proposition \ref{finaletwo} is now complete.

\section{Proof of Corollary \ref{maincor}}\label{pfofmaincor}
The abelian variety $A'$ is isogenous to $\prod_{i=1}^s C_i^{e_i}$ with $C_i$ pairwise non-isogenous simple abelian varieties defined over $K$ and $e_i\geq 1$. By removing a finite number of prime ideals of $S$, we may assume that $A',B_1,\cdots, B_r,C_1,\cdots, C_s$ have good reductions for all $\p\in S$. 
Let $J$ be the set of $j\in \{1,\ldots,s\}$ for which $C_j$ is not isogenous to any $B_i$. We need to show that $J=\emptyset$. 

Define 
\[
A'':=\prod_{i=1}^r B_i \times \prod_{j\in J} C_j
\]
which is square-free by our choice of $J$. 
Since, $A$ is isogenous to the abelian subvariety $\prod_{i=1}^r B_i$ of $A''$, we deduce that $|A(\FF_\p)|$ divides $|A''(\FF_\p)|$ for all $\p\in S$.
On the other hand, 
since $|(\prod_{i=1}^r B_i)(\FF_\p)|=|A(\FF_\p)|$ and $|(\prod_{j\in J}C_j)(\FF_\p)|$ divides $|A'(\FF_\p)|$ for all $\p\in S$, we find that
\[
|A''(\FF_\p)|=|(\prod_{i=1}^r B_i)(\FF_\p)|\cdot|(\prod_{j\in J}C_j)(\FF_\p)| \quad\text{ divides }\quad |A(\FF_\p)|\cdot|A'(\FF_\p)|.
\] 
In particular, $\rad_\Lambda |A''(\FF_\p)|$ divides $\rad_\Lambda (|A(\FF_\p)|\cdot|A'(\FF_\p)|)$.
By our assumption that $\rad_\Lambda|A'(\FF_\p)|$ divides $\rad_\Lambda|A(\FF_\p)|$, we have $\rad_\Lambda (|A(\FF_\p)|\cdot|A'(\FF_\p)|) = \rad_\Lambda |A(\FF_\p)|$ for all $\p\in S$.  
Hence, $\rad_\Lambda|A''(\FF_\p)|$ divides $\rad_\Lambda|A(\FF_\p)|$ and so
\[
\rad_\Lambda |A''(\FF_\p)|
=\rad_\Lambda|A(\FF_\p)|
\] 
holds for all $\p\in S$.
Since both $A''$ and $A$ are squarefree, by Theorem \ref{mainthm}, $A''$ is isogenous to $A$ and hence $J=\emptyset$.

\section{The splitting of reductions of an abelian variety}\label{splitting}
Let $A$ be a simple abelian variety defined over a number $K$ such that $K=K_A^\conn$. Since $A$ is simple, $D:=\End(A)\otimes_\ZZ \QQ$ is a division algebra; note that $A_{\overline{K}}$ is simple and $D=\End(A_{\overline{K}})\otimes_\ZZ \QQ$ by Lemma \ref{endA}. Let $E$ be the center of $D$; it is a number field. In particular, $D$ is a central simple algebra over $E$. Define the integers $e:=[D:E]^{1/2}$ and $r=[E:\QQ]$. 

Choose a prime $\ell$ that splits completely in $E$; it exists by the Chebotarev density theorem. Let $\lambda_i\ (1\leq i \leq r)$ be the prime ideals of $\mathcal{O}_E$ that divides $\ell$.
For each $\lambda_i$, let $E_{\lambda_i}$ be the $\lambda_i$-adic completion of $E$. Then we have $E\otimes_\QQ \QQ_\ell=\bigoplus_{i=1}^r E_{\lambda_i}$. Note that the ring $E\otimes_\QQ \QQ_\ell$ acts on $V_\ell(A)$ and commutes with the $\Gal_K$ action. 
If we let $V_{\lambda_i}(A):=V_\ell(A)\otimes_{E\otimes \QQ_\ell}E_{\lambda_i}$, then we have a decomposition
\[
V_\ell(A) = \bigoplus_{i=1}^r V_{\lambda_i}(A)
\]
of $\QQ_\ell[\Gal_K]$-modules. Each $V_{\lambda_i}(A)$ is also an $E_{\lambda_i}[\Gal_K]$-module which can be expressed as a Galois representation
\[
\rho_{A,\lambda_i}:\ \Gal_K\to \Aut_{E_{\lambda_i}}(V_{\lambda_i}(A))=\Aut_{\QQ_\ell}(V_{\lambda_i}(A))
\]
where the equality uses that $E_{\lambda_i}=\QQ_\ell$ since $\ell$ splits completely in $E$.

Our assumption $K_A^{\text{conn}}=K$ and Theorem \ref{bgAGell}(c) imply that the $\ell$-adic monodromy group $G_{A,\ell}$ is connected and reductive. 
Choose a maximal torus $T\subseteq G_{A,\ell}$ and consider the set $\Omega(V_\ell(A))\subseteq X(T)$ of weights of $G_{A,\ell}$ acting on $V_\ell(A)$.
We will denote by $\Omega(V_{\lambda_i}(A))\ (1\leq i\leq r)$ the set of weights of $G_{A,\ell}$ acting on $V_{\lambda_i}(A)$. We have $\Omega(V_\ell(A))=\cup_{i=1}^r \Omega(V_{\lambda_i}(A))$.

By Theorem \ref{bgAGell}(a), we know that $\rho_{A,\ell}:\Gal_K\to \Aut_{\QQ_\ell}(V_\ell(A))$ is semisimple. In the next lemma, we will see that $\rho_{A,\ell}$ decomposes into absolutely irreducible representations in a very special way.

\begin{lemma}\label{technical}
\noindent
\begin{enumerate}[(a)]
\item For each $\lambda_i$, we have an isomorphism 
\[
V_{\lambda_i}(A)\otimes_{\QQ_\ell} {\overline{\QQ}_\ell}\isom e\cdot W_{\lambda_i}
\]
of $(G_{A,\ell})_{{\overline{\QQ}_\ell}}$ representations (equivalently, $\overline{\QQ}_\ell[\Gal_K]$-modules), where the $W_{\lambda_i}$ are irreducible. 
Moreover,
$W_{\lambda_i}\not\isom W_{\lambda_j}$ for $i\not= j$.

\item The weights of the $(G_{A,\ell})_{{\overline{\QQ}_\ell}}$ representation $W_{\lambda_i}$ form a single orbit under the absolute Weyl group action and each weight has multiplicity one.
\item For $i\not=j$, the representations $W_{\lambda_i}$ and $W_{\lambda_j}$ of $(G_{A,\ell})_{\overline{\QQ}_\ell}$ have no common weights; equivalently, $\Omega(V_{\lambda_i}(A))\cap \Omega(V_{\lambda_j}(A))=\emptyset$.
\end{enumerate}
\end{lemma}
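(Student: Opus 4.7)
The plan is to deduce all three parts from Faltings's description of the endomorphism algebra (Theorem \ref{bgAGell}), combined with the structure theory of central simple algebras for (a) and Pink's minuscule result (Proposition \ref{minuscule}) for (b) and (c).

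For part (a), the starting point is Faltings's identification $\End_{\QQ_\ell[\Gal_K]}(V_\ell(A)) = D \otimes_\QQ \QQ_\ell$. Using the decomposition $E \otimes_\QQ \QQ_\ell = \bigoplus_{i=1}^r E_{\lambda_i}$ of the center, with $E_{\lambda_i} = \QQ_\ell$ since $\ell$ splits completely in $E$, I would write $D \otimes_\QQ \QQ_\ell = \bigoplus_{i=1}^r D_{\lambda_i}$ as a product of central simple $\QQ_\ell$-algebras each of dimension $e^2$. After base change to $\overline{\QQ}_\ell$, every central simple algebra over $\QQ_\ell$ splits, so $\End_{\Gal_K}(V_\ell(A) \otimes \overline{\QQ}_\ell) = \bigoplus_{i=1}^r M_e(\overline{\QQ}_\ell)$. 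On the other hand, semisimplicity (Theorem \ref{bgAGell}(a)) gives $V_\ell(A) \otimes \overline{\QQ}_\ell \isom \bigoplus_k m_k W_k$ with pairwise non-isomorphic irreducible $\overline{\QQ}_\ell[\Gal_K]$-modules $W_k$ (equivalently, $(G_{A,\ell})_{\overline{\QQ}_\ell}$-representations, since $\rho_{A,\ell}(\Gal_K)$ is Zariski dense in $G_{A,\ell}$), and Schur's lemma gives $\End_{\Gal_K}(V_\ell(A) \otimes \overline{\QQ}_\ell) = \bigoplus_k M_{m_k}(\overline{\QQ}_\ell)$. Matching simple factors forces exactly $r$ isotypic components, each of multiplicity $e$. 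Finally, the $i$-th central idempotent of $E \otimes \QQ_\ell$ cuts out precisely one isotypic component, which we label $e \cdot W_{\lambda_i}$, giving $V_{\lambda_i}(A) \otimes \overline{\QQ}_\ell \isom e \cdot W_{\lambda_i}$ with pairwise non-isomorphic $W_{\lambda_i}$.

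For part (b), Proposition \ref{minuscule} asserts that every irreducible sub-representation of $V_\ell(A) \otimes \overline{\QQ}_\ell$ for $(G_{A,\ell})_{\overline{\QQ}_\ell}$ is minuscule. In particular each $W_{\lambda_i}$ is minuscule, so its weights form a single orbit under the absolute Weyl group, and, as noted in the paragraph preceding Theorem \ref{WMT}, each weight of a minuscule irreducible representation has multiplicity one. For part (c), suppose some weight lies in $\Omega(V_{\lambda_i}(A)) \cap \Omega(V_{\lambda_j}(A))$ for $i \neq j$. By part (b), the weight sets of $W_{\lambda_i}$ and $W_{\lambda_j}$ are each a single Weyl orbit, so they must coincide; since all weights have multiplicity one, $W_{\lambda_i}$ and $W_{\lambda_j}$ then have the same character on a maximal torus of $(G_{A,\ell})_{\overline{\QQ}_\ell}$ and so are isomorphic, contradicting the last clause of part (a).

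The main obstacle I expect is pinning down, in part (a), the bijection between the central idempotents of $E \otimes \QQ_\ell$ and the isotypic components of $V_\ell(A) \otimes \overline{\QQ}_\ell$. The cleanest way to see this is to observe that each simple factor $D_{\lambda_i}$ of the endomorphism algebra, being a simple two-sided ideal, acts as zero on all but one isotypic piece, so the central decomposition coming from $E$ genuinely refines to the isotypic decomposition on the Galois side; once this identification is in place, parts (b) and (c) are essentially a direct transcription of Proposition \ref{minuscule} and a one-line character argument.
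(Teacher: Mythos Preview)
Your proposal is correct and follows essentially the same route as the paper. For (a) the paper likewise computes $\End_{\overline{\QQ}_\ell[\Gal_K]}(V_\ell(A)\otimes\overline{\QQ}_\ell)\cong\prod_i M_e(\overline{\QQ}_\ell)$ via Faltings and the splitting of $D\otimes_E E_{\lambda_i}$ over $\overline{\QQ}_\ell$, and handles your ``main obstacle'' by writing down the obvious inclusions $\prod_i (D\otimes_E E_{\lambda_i})\otimes\overline{\QQ}_\ell\hookrightarrow\prod_i\End_{\overline{\QQ}_\ell[\Gal_K]}(V_{\lambda_i}(A)\otimes\overline{\QQ}_\ell)\hookrightarrow\End_{\overline{\QQ}_\ell[\Gal_K]}(V_\ell(A)\otimes\overline{\QQ}_\ell)$ and noting that the two ends have the same dimension; parts (b) and (c) are argued exactly as you describe.
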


\begin{proof}
\noindent
\begin{enumerate}[(a)]
\item 
 
First, we have natural isomorphisms

\[\begin{array}{rcl}
\End(A)\otimes_\ZZ \QQ_\ell & = & D\otimes_\QQ \QQ_\ell\\
 & = & (D\otimes_E E) \otimes_\QQ \QQ_\ell\\
  & = & D\otimes_E (E\otimes_\QQ \QQ_\ell)\\
   & = & \prod_{i=1}^r D\otimes_E E_{\lambda_i}\\
\end{array}.\]

By tensoring $\End(A)\otimes_\ZZ \QQ_\ell$ with ${\overline{\QQ}}_\ell$ over $\QQ_\ell$, we have
\[
\End(A)\otimes_\ZZ \overline{\QQ}_\ell = \prod_{i=1}^r \left(D\otimes_E E_{\lambda_i}\right)\otimes_{\QQ_\ell} {\overline{\QQ}_\ell}.
\]

Note that $D\otimes_E E_{\lambda_i}$ naturally acts on each $V_{\lambda_i}(A)=V_{\ell}(A)\otimes_{(E\otimes_\QQ \QQ_\ell)} E_{\lambda_i}$ and commutes with the Galois action, so, we have an inclusion
\[
\left(D\otimes_E E_{\lambda_i}\right)\otimes_{\QQ_\ell} {\overline{\QQ}_\ell} \injects  \End_{{\overline{\QQ}_\ell}[\Gal_K]}(V_{\lambda_i}(A)\otimes_{\QQ_\ell} {\overline{\QQ}_\ell}).
\]

Moreover, since $V_\ell(A) = \bigoplus_{i=1}^r V_{\lambda_i}(A)$, we have the inclusion
\[
\prod_{i=1}^r\End_{{\overline{\QQ}_\ell}[\Gal_K]}(V_{\lambda_i}(A)\otimes_{\QQ_\ell} {\overline{\QQ}_\ell}) \injects \End_{{\overline{\QQ}_\ell}[\Gal_K]}(V_\ell(A)\otimes_{\QQ_\ell} {\overline{\QQ}_\ell}).
\]

By combining the above, we thus have the following inclusions: 
\begin{equation}\label{inclusion}
\prod_{i=1}^r \left(D\otimes_E E_{\lambda_i}\right)\otimes_{\QQ_\ell} {\overline{\QQ}_\ell} \injects \prod_{i=1}^r \End_{{\overline{\QQ}_\ell}[\Gal_K]}(V_{\lambda_i}(A)\otimes_{\QQ_\ell} {\overline{\QQ}_\ell}) \injects \End_{{\overline{\QQ}_\ell}[\Gal_K]}(V_\ell(A)\otimes_{\QQ_\ell} {\overline{\QQ}_\ell}).
\end{equation}

By Theorem \ref{bgAGell}(b), we have $\End(A)\otimes_\ZZ {\overline{\QQ}_\ell}\isom \End_{{\overline{\QQ}_\ell}[\Gal_K]}(V_\ell(A)\otimes_{\QQ_\ell} {\overline{\QQ}_\ell})$.
So, the homomorphisms in (\ref{inclusion}) are isomorphisms. The isomorphism 
\[
\prod_{i=1}^r \End_{{\overline{\QQ}_\ell}[\Gal_K]}(V_{\lambda_i}(A)\otimes_{\QQ_\ell} {\overline{\QQ}_\ell}) \isom \End_{{\overline{\QQ}_\ell}[\Gal_K]}(V_\ell(A)\otimes_{\QQ_\ell} {\overline{\QQ}_\ell})
\]
shows that $V_{\lambda_i}(A)\otimes_{\QQ_\ell} {\overline{\QQ}_\ell}$ and $V_{\lambda_j}(A)\otimes_{\QQ_\ell} {\overline{\QQ}_\ell}$
have no isomorphic irreducible representations in common for $i\not= j$. 
On the other hand, 
since $\ell$ splits completely in $E$, we have $E_{\lambda_i}=\QQ_\ell$ and $\left(D\otimes_E E_{\lambda_i}\right)\otimes_{\QQ_\ell} {\overline{\QQ}_\ell}\isom D\otimes_E {\overline{\QQ}_\ell}$ is a central simple algebra over ${\overline{\QQ}_\ell}$ (it is a general fact that if $D$ is a central simple algebra with center $E$, then $D\otimes_E L$ is a central simple algebra over $L$ for any field extension $L$ of $E$).
Hence the algebra $\left(D\otimes_E E_{\lambda_i}\right)\otimes_{\QQ_\ell} {\overline{\QQ}_\ell}$ is isomorphic to $M_e({\overline{\QQ}_\ell})$ since ${\overline{\QQ}_\ell}$ is algebraically closed.
The isomorphism $\End_{{\overline{\QQ}_\ell}[\Gal_K]}(V_{\lambda_i}(A)\otimes_{\QQ_\ell} {\overline{\QQ}_\ell})\isom M_e(\overline{\QQ}_\ell)$ and the semisimplicity of the representation $\rho_{A,\ell}$ implies that $V_{\lambda_i}(A)\otimes_{\QQ_\ell} {\overline{\QQ}_\ell}$ is isotypic and moreover it is isomorphic to a direct summand of $e$ copies of an irreducible representation $W_{\lambda_i}$ of $\overline{\QQ}_\ell[\Gal_K]$. The irreducible representations $W_{\lambda_i}$ and $W_{\lambda_j}$, with $i\not= j$, are not isomorphic since $V_{\lambda_i}(A)\otimes_{\QQ_\ell} {\overline{\QQ}_\ell}$ and $V_{\lambda_j}(A)\otimes_{\QQ_\ell} {\overline{\QQ}_\ell}$ are not isomorphic.

\item
The follows from Proposition \ref{minuscule} and part (a).

\item 
Recall that $\rho_{A,\ell}:\Gal_K \to \Aut_{\QQ_\ell}(V_\ell(A))$ induces a representation $\iota_{A,\ell}:G_{A,\ell}\injects \GL_{V_\ell(A)}$.
For each $i$, the $G_{A,\ell}$-action preserves $V_{\lambda_i}(A)$ and induces a representation
$\iota_{A,\lambda_i}: G_{A,\ell}\to \GL_{V_{\lambda_i}(A)}$.

By (a), $\Omega(V_{\lambda_i}(A))$ is equal to the weights of $W_{\lambda_i}$. So by (b), the Weyl group of $(G_{A,\ell})_{\overline{\QQ}_\ell}$ acts transitively on $\Omega(V_{\lambda_i}(A))$. So for $i\not=j$, $\Omega(V_{\lambda_i}(A))$ and $\Omega(V_{\lambda_j}(A))$ are either equal or disjoint. 

Suppose $\Omega(V_{\lambda_i}(A))=\Omega(V_{\lambda_j}(A))$ with $i\not= j$. We have 
\[
\text{tr}\circ \iota_{A,\lambda_i}(t)=e\cdot\sum_{\alpha\in \Omega(V_{\lambda_i}(A))} \alpha(t) = e\cdot\sum_{\alpha\in \Omega(V_{\lambda_j}(A))} \alpha(t)=\text{tr}\circ \iota_{A,\lambda_j}(t)
\]
for all $t\in T$. Since $G_{A,\ell}$ is reductive, this implies that $\text{tr}\circ \iota_{A,\lambda_i}=\text{tr}\circ \iota_{A,\lambda_j}$ and hence $\iota_{A,\lambda_i}$ and $\iota_{A,\lambda_j}$ are isomorphic. Therefore, the representations $V_{\lambda_i}(A)$ and $V_{\lambda_j}(A)$ of $G_{A,\ell}$ are isomorphic.
This contradicts (a) and hence $\Omega(V_{\lambda_i}(A))$ and $\Omega(V_{\lambda_j}(A))$ are disjoint. \qedhere
\end{enumerate} 
\end{proof}

\begin{lemma}\label{e}
Each weight of the representation $V_{\ell}(A)\otimes_{\QQ_\ell} {\overline{\QQ}_\ell}$ of $(G_{A,\ell})_{\overline{\QQ}_\ell}$ has multiplicity $e$. 
\end{lemma}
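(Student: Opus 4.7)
The proof should follow almost immediately by combining the three parts of Lemma~\ref{technical}. The plan is to decompose $V_\ell(A)\otimes_{\QQ_\ell}\overline{\QQ}_\ell$ as a $(G_{A,\ell})_{\overline{\QQ}_\ell}$-representation using the splitting
\[
V_\ell(A)\otimes_{\QQ_\ell}\overline{\QQ}_\ell = \bigoplus_{i=1}^r V_{\lambda_i}(A)\otimes_{\QQ_\ell}\overline{\QQ}_\ell
\]
and read off the multiplicities weight-by-weight.

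First, fix an arbitrary weight $\alpha \in \Omega(V_\ell(A)) = \bigcup_{i=1}^r \Omega(V_{\lambda_i}(A))$. By Lemma~\ref{technical}(c), the sets $\Omega(V_{\lambda_i}(A))$ are pairwise disjoint, so there is a \emph{unique} index $i$ with $\alpha \in \Omega(V_{\lambda_i}(A))$. In particular, the $\alpha$-weight space of $V_\ell(A)\otimes_{\QQ_\ell}\overline{\QQ}_\ell$ coincides with the $\alpha$-weight space of $V_{\lambda_i}(A)\otimes_{\QQ_\ell}\overline{\QQ}_\ell$, and all contributions from the other summands vanish.

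Next, by Lemma~\ref{technical}(a), we have an isomorphism $V_{\lambda_i}(A)\otimes_{\QQ_\ell}\overline{\QQ}_\ell \isom W_{\lambda_i}^{\oplus e}$ of $(G_{A,\ell})_{\overline{\QQ}_\ell}$-representations. Therefore the multiplicity of $\alpha$ in $V_{\lambda_i}(A)\otimes_{\QQ_\ell}\overline{\QQ}_\ell$ is exactly $e$ times the multiplicity of $\alpha$ in $W_{\lambda_i}$. By Lemma~\ref{technical}(b), $W_{\lambda_i}$ is minuscule with each weight having multiplicity $1$, so $\alpha$ has multiplicity $1$ in $W_{\lambda_i}$, and hence multiplicity $e$ in $V_{\lambda_i}(A)\otimes_{\QQ_\ell}\overline{\QQ}_\ell$.

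Combining the two steps, every $\alpha \in \Omega(V_\ell(A))$ has multiplicity $e$ in $V_\ell(A)\otimes_{\QQ_\ell}\overline{\QQ}_\ell$, which is exactly the claim. There is no real obstacle here; the work has already been done in Lemma~\ref{technical}, and this lemma just records the numerical consequence needed for Theorem~\ref{seppower}.
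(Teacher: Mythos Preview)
Your proof is correct and follows essentially the same approach as the paper: both arguments combine parts (a), (b), and (c) of Lemma~\ref{technical} to conclude that the weight spaces of $V_\ell(A)\otimes_{\QQ_\ell}\overline{\QQ}_\ell$ all have dimension $e$. Your version is simply a more explicit unpacking of the same three-step reasoning.
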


\begin{proof}
By Lemma \ref{technical}(c), the sets $\Omega(V_{\lambda_i}(A))$ and $\Omega(V_{\lambda_j}(A))$ are disjoint for $i\not=j$. By Lemma \ref{technical}(b), for each $i$, each weight in the representation $W_{\lambda_i}$ of $(G_{A,\ell})_{\overline{\QQ}_\ell}$ has multiplicity $1$. So by Lemma \ref{technical}(a), each weight of $V_\ell(A)\otimes_{\QQ_\ell} {\overline{\QQ}_\ell}$ has multiplicity $e$.
\end{proof}

\subsection{Proof of Theorem \ref{seppower}}\label{pfofseppower}
By Lemma \ref{e}, each weight of the representation $V_\ell(A)\otimes_{\QQ_\ell} {\overline{\QQ}_\ell}$ of $(G_{A,\ell})_{\overline{\QQ}_\ell}$ has multiplicity $e$. 
So we have 
\[
P_{A,\p}(x)=\prod_{\alpha\in \Omega(V_\ell(A))}(x-\alpha(t_\p))^e
\]
for all $\p$ for which $A$ has good reduction and $\p\nmid \ell$, where $t_\p\in T(\overline{\QQ}_\ell)$ is any element conjugate to $\rho_{A,\ell}(\Frob_\p)$ in $G_{A,\ell}(\overline{\QQ}_\ell)$.

By taking $A'=0$ in Lemma \ref{Z}, if we consider all distinct pairs of $\alpha,\beta\in \Omega(V_\ell(A))$, then for almost all $\p$, $\alpha(t_\p)\not=\beta(t_\p)$ for $\alpha\not=\beta$. Therefore, for almost all $\p$, the Frobenius polynomial $P_{A,\p}(x)$ is the $e$-th power of a separable polynomial.

\section{Number of points on abelian varieties}\label{DZ}
We will now prove the following theorem, as promised in \S\ref{intro}, which says that the function $\p\in S\mapsto |A(\FF_\p)|$ determines $A$ up to isogeny.

\begin{theorem}\label{DZthm}
Let $A$ and $A'$ be abelian varieties defined over a number field $K$.
Let $S$ be any density $1$ set of prime ideals $\p$ of $\mathcal{O}_K$ for which $A$ and $A'$ have good reduction. Suppose
\[
|A(\FF_\p)|=|A'(\FF_\p)|
\]
for all $\p\in S$, 
then $A$ is isogenous to $A'$ (over $K$).
\end{theorem}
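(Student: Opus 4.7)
The plan is to upgrade the density-one identity $|A(\FF_\p)| = |A'(\FF_\p)|$, i.e.\ $P_{A,\p}(1) = P_{A',\p}(1)$, to the full identity $P_{A,\p}(x) = P_{A',\p}(x)$ on a density-one subset of $\Sigma_K$, at which point Faltings' theorem applies directly. Throughout I would fix a sufficiently large prime $\ell$ and let $G := G_{A\times A',\ell}$ be the joint $\ell$-adic monodromy group of $A\times A'$ over $K$, viewed inside $\GL_{V_\ell(A)}\times\GL_{V_\ell(A')}$ via the two natural projections $g\mapsto(g_A,g_{A'})$.

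First, by Chebotarev's density theorem the Frobenius elements $\rho_{A\times A',\ell}(\Frob_\p)$ for $\p$ ranging over any density-one subset of $\Sigma_K$ are $\ell$-adically dense in $\rho_{A\times A',\ell}(\Gal_K)$, hence Zariski-dense in $G$. The hypothesis therefore reads
\[
\det(I - g_A \mid V_\ell(A)) = \det(I - g_{A'} \mid V_\ell(A'))
\]
on a Zariski-dense set of points of $G$, and since both sides are regular functions on $G$ the identity extends to all of $G$.

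The crucial additional ingredient is that the diagonal $\mathbb{G}_m$ of homotheties lies inside $G$ for all sufficiently large $\ell$. Applying Theorem~\ref{bgHell}(b) to $A\times A'$ after base-changing to $L := K_{A\times A'}^\conn$ (where monodromy is connected), the mod-$\ell$ homotheties in $H_{(A\times A')_L,\ell}$ lift, by rigidity of reductive group schemes over $\ZZ_\ell$, to a $\mathbb{G}_m$ sitting inside $G^\circ \subseteq G$ (recall $G^\circ$ is precisely the monodromy over $L$ by Proposition~\ref{KAconn}). For every $g\in G$ and $t\in\mathbb{G}_m$ the element $tg = (tg_A, tg_{A'})$ then lies in $G$, so substituting into the identity above gives
\[
\det(I - tg_A \mid V_\ell(A)) = \det(I - tg_{A'} \mid V_\ell(A'))
\]
as an identity of polynomials in $t$. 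Since $g_A, g_{A'}$ are invertible all their eigenvalues are nonzero, and matching the nonzero roots of the two polynomials forces equality of the characteristic polynomials of $g_A$ on $V_\ell(A)$ and $g_{A'}$ on $V_\ell(A')$ (and en passant forces $\dim A = \dim A'$ from the degree count). Specializing to $g = \rho_{A\times A',\ell}(\Frob_\p)$ delivers $P_{A,\p}(x) = P_{A',\p}(x)$ for every $\p \in S$, and Faltings' theorem produces the desired $K$-isogeny.

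The main subtlety to watch is the temptation to base-change to $L$ at the outset: arguing over $L$ would only yield $P_{A,\p}(x) = P_{A',\p}(x)$ for primes of $K$ that split completely in $L$, a set of density $1/[L:K]$, which is too sparse to invoke Faltings back over $K$. Running everything over $K$ is made possible precisely by the containment $\mathbb{G}_m \subseteq G^\circ \subseteq G$ already holding over $K$, so the scalar-substitution step requires no base change and the density-one pool of primes survives intact to the very last step.
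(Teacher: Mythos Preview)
Your argument is essentially the paper's own: pass from the density-one identity to a Zariski-closed identity on $G=G_{A\times A',\ell}$, use $\mathbb{G}_m\subseteq G$ to scale, match characteristic polynomials, and invoke Faltings. The only real divergence is how you justify $\mathbb{G}_m\subseteq G$. The paper cites Bogomolov's theorem \cite{MR574307} directly, which already gives the homotheties inside $G_{A\times A',\ell}$ over $\QQ_\ell$ for \emph{every} prime $\ell$, with no connectedness hypothesis and no need to single out large $\ell$. Your route---apply Theorem~\ref{bgHell}(b) over $L=K_{A\times A'}^\conn$ to get homotheties in the special fibre $H_{(A\times A')_L,\ell}$, then ``lift by rigidity'' to $G^\circ$---is circuitous (Theorem~\ref{bgHell}(b) is itself downstream of Bogomolov) and the lifting step is underspecified: rigidity gives \emph{some} rank-one torus in $\mathcal{G}_\ell$ reducing to the scalar $\mathbb{G}_m$, and you still need to argue (e.g.\ by noting that the integer weights of a $\mathbb{G}_m$-action on a free $\ZZ_\ell$-module are unchanged under reduction) that this lift acts by scalars on $V_\ell$. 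Replacing that paragraph with a one-line citation of Bogomolov makes the proof both shorter and cleaner, and your final paragraph's caution about not base-changing to $L$ prematurely becomes moot.
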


When $A$ and $A'$ are elliptic curves, Theorem \ref{DZthm} is an immediate consequence of Faltings' theorem since $P_{A,\p}(x)=x^2 - (N(\p)+1-|A(\FF_\p)|)x + N(\p)=P_{A',\p}(x)$ for all $\p\in S$, where $N(\p)=|\FF_\p|$. In higher dimensions, the theorem does not seem to occur in the literature (and in fact is stated as a conjecture in \cite{MR3429320}). The proof below was supplied by David Zywina.

\begin{proof}[Proof of Theorem \ref{DZthm}]
Fix a prime $\ell$. Let $G:=G_{A\times A',\ell}$; we can identify $G$ with a closed algebraic subgroup of $G_{A,\ell}\times G_{A',\ell}$. 

We claim that $\det(I-B)=\det(I-B')$ holds for every $(B,B')\in G(\QQ_\ell)$. Define
\[
Y:=\{(B,B')\in G:\ \det(I-B)=\det(I-B')\};
\]
it is a subvariety of $G$ stable under conjugation. 
To prove the claim it suffices to 
show that $Y=G$. Take any $\p\in S$ such that $\p\nmid \ell$. By assumption we have $|A(\FF_\p)|=|A'(\FF_\p)|$ and so 
\[
\det(I-\rho_{A,\ell}(\Frob_\p))=|A(\FF_\p)|=|A'(\FF_\p)|=\det(I-\rho_{A',\ell}(\Frob_\p)).
\]
Therefore, $\rho_{A\times A',\ell}(\Frob_\p)\in Y(\QQ_\ell)$ for all $\p\in S$ with $\p\nmid \ell$. By the Chebotarev density theorem, the Zariski closure $G$ of $\rho_{A\times A',\ell}(\Gal_K)$ is contained in $Y$. Therefore, $Y=G$ and the claim is now clear.

Fix any $(B,B')\in G(\QQ_\ell)$ and $\lambda\in \QQ_\ell^\times$.  
It is known that $G$ contains the group $\mathbb{G}_m$ of homotheties \cite{MR574307}. 
So $(\lambda^{-1}B,\lambda^{-1}B')\in G(\QQ_\ell)$ and by our claim above, we have 
\[
\det(I-\lambda^{-1}B)=\det(I-\lambda^{-1}B')
\]
for all $\lambda\in \QQ_\ell^\times$ and so 
\[
\lambda^{g'}\det(\lambda I-B)=\lambda^g\det(\lambda I-B').
\]
Hence, we have $x^{g'}\det(xI-B)=x^g\det(xI-B')\in \QQ_\ell[x]$ since their difference is a polynomial with infinitely many roots in $\QQ_\ell$. Since $\QQ_\ell[x]$ is a UFD and $\det(B)\det(B')\not=0$, it follows that $g=g'$ and $\det(xI-B)=\det(xI-B')$. 

So for all $\p\in S$, we have $\rho_{A\times A',\ell}(\Frob_\p)=(\rho_{A,\ell}(\Frob_\p),\rho_{A',\ell}(\Frob_\p))\in G(\QQ_\ell)$ and hence
\[
P_{A,\p}(x)=\det(xI-\rho_{A,\ell}(\Frob_\p))=\det(xI-\rho_{A',\ell}(\Frob_\p))=P_{A',\p}(x).
\]
By Faltings' theorem, we deduce that $A$ is isogenous to $A'$.  
\end{proof}

\def\cprime{$'$}
% \bib, bibdiv, biblist are defined by the amsrefs package.
\begin{bibdiv}
\begin{biblist}

\bib{MR574307}{article}{
      author={Bogomolov, Fedor~Alekseivich},
       title={Sur l'alg\'ebricit\'e des repr\'esentations {$l$}-adiques},
        date={1980},
        ISSN={0151-0509},
     journal={C. R. Acad. Sci. Paris S\'er. A-B},
      volume={290},
      number={15},
       pages={A701\ndash A703},
      review={\MR{574307}},
}

\bib{MR2109105}{book}{
      author={Bourbaki, Nicolas},
       title={Lie groups and {L}ie algebras. {C}hapters 7--9},
      series={Elements of Mathematics (Berlin)},
   publisher={Springer-Verlag, Berlin},
        date={2005},
        ISBN={3-540-43405-4},
        note={Translated from the 1975 and 1982 French originals by Andrew
  Pressley},
      review={\MR{2109105}},
}

\bib{MR861971}{incollection}{
      author={Faltings, Gerd},
       title={Finiteness theorems for abelian varieties over number fields},
        date={1986},
   booktitle={Arithmetic geometry ({S}torrs, {C}onn., 1984)},
   publisher={Springer, New York},
       pages={9\ndash 27},
        note={Translated from the German original [Invent. Math. {{\bf{7}}3}
  (1983), no. 3, 349--366; ibid. {{\bf{7}}5} (1984), no. 2, 381; MR
  85g:11026ab] by Edward Shipz},
      review={\MR{861971}},
}

\bib{MR0217086}{article}{
      author={Grothendieck, A.},
       title={\'El\'ements de g\'eom\'etrie alg\'ebrique. {IV}. \'Etude locale
  des sch\'emas et des morphismes de sch\'emas. {III}},
        date={1966},
        ISSN={0073-8301},
     journal={Inst. Hautes \'Etudes Sci. Publ. Math.},
      number={28},
       pages={255},
         url={http://www.numdam.org/item?id=PMIHES_1966__28__255_0},
      review={\MR{0217086}},
}

\bib{MR3019751}{article}{
      author={Hall, Chris},
      author={Perucca, Antonella},
       title={On the prime divisors of the number of points on an elliptic
  curve},
        date={2013},
        ISSN={1631-073X},
     journal={C. R. Math. Acad. Sci. Paris},
      volume={351},
      number={1-2},
       pages={1\ndash 3},
         url={http://dx.doi.org/10.1016/j.crma.2013.01.003},
      review={\MR{3019751}},
}

\bib{MR0396773}{book}{
      author={Humphreys, James~E.},
       title={Linear algebraic groups},
   publisher={Springer-Verlag, New York-Heidelberg},
        date={1975},
        note={Graduate Texts in Mathematics, No. 21},
      review={\MR{0396773}},
}

\bib{MR1803934}{article}{
      author={Katz, Nicholas~M.},
       title={Sums of {B}etti numbers in arbitrary characteristic},
        date={2001},
        ISSN={1071-5797},
     journal={Finite Fields Appl.},
      volume={7},
      number={1},
       pages={29\ndash 44},
         url={http://dx.doi.org/10.1006/ffta.2000.0303},
        note={Dedicated to Professor Chao Ko on the occasion of his 90th
  birthday},
      review={\MR{1803934}},
}

\bib{MR1441234}{article}{
      author={Larsen, Michael},
      author={Pink, Richard},
       title={A connectedness criterion for {$l$}-adic {G}alois
  representations},
        date={1997},
        ISSN={0021-2172},
     journal={Israel J. Math.},
      volume={97},
       pages={1\ndash 10},
         url={http://dx.doi.org/10.1007/BF02774022},
      review={\MR{1441234}},
}

\bib{MR3351177}{article}{
      author={Orr, Martin},
       title={Lower bounds for ranks of {M}umford-{T}ate groups},
        date={2015},
        ISSN={0037-9484},
     journal={Bull. Soc. Math. France},
      volume={143},
      number={2},
       pages={229\ndash 246},
      review={\MR{3351177}},
}

\bib{MR3429320}{article}{
      author={Perucca, Antonella},
       title={The prime divisors of the number of points on abelian varieties},
        date={2015},
        ISSN={1246-7405},
     journal={J. Th\'eor. Nombres Bordeaux},
      volume={27},
      number={3},
       pages={805\ndash 814},
         url={http://jtnb.cedram.org/item?id=JTNB_2015__27_3_805_0},
      review={\MR{3429320}},
}

\bib{MR1603865}{article}{
      author={Pink, Richard},
       title={{$l$}-adic algebraic monodromy groups, cocharacters, and the
  {M}umford-{T}ate conjecture},
        date={1998},
        ISSN={0075-4102},
     journal={J. Reine Angew. Math.},
      volume={495},
       pages={187\ndash 237},
         url={http://dx.doi.org/10.1515/crll.1998.018},
      review={\MR{1603865}},
}

\bib{MR3276321}{article}{
      author={Ratazzi, Nicolas},
       title={Classe d'isog\'enie de vari\'et\'es ab\'eliennes pleinement de
  type {GS}p},
        date={2015},
        ISSN={0022-314X},
     journal={J. Number Theory},
      volume={147},
       pages={156\ndash 171},
         url={http://dx.doi.org/10.1016/j.jnt.2014.07.017},
      review={\MR{3276321}},
}

\bib{MR1730973}{book}{
      author={Serre, Jean-Pierre},
       title={\OE uvres. {C}ollected papers. {IV}},
   publisher={Springer-Verlag, Berlin},
        date={2000},
        ISBN={3-540-65683-9},
         url={http://dx.doi.org/10.1007/978-3-642-41978-2},
        note={1985--1998},
      review={\MR{1730973}},
}

\bib{MR563476}{incollection}{
      author={Serre, Jean-Pierre},
       title={Groupes alg\'ebriques associ\'es aux modules de {H}odge-{T}ate},
        date={1979},
   booktitle={Journ\'ees de {G}\'eom\'etrie {A}lg\'ebrique de {R}ennes.
  ({R}ennes, 1978), {V}ol. {III}},
      series={Ast\'erisque},
      volume={65},
   publisher={Soc. Math. France, Paris},
       pages={155\ndash 188},
      review={\MR{563476}},
}

\bib{MR1484415}{book}{
      author={Serre, Jean-Pierre},
       title={Abelian {$l$}-adic representations and elliptic curves},
      series={Research Notes in Mathematics},
   publisher={A K Peters, Ltd., Wellesley, MA},
        date={1998},
      volume={7},
        ISBN={1-56881-077-6},
        note={With the collaboration of Willem Kuyk and John Labute, Revised
  reprint of the 1968 original},
      review={\MR{1484415}},
}

\bib{MR1944805}{article}{
      author={Wintenberger, J.-P.},
       title={D\'emonstration d'une conjecture de {L}ang dans des cas
  particuliers},
        date={2002},
        ISSN={0075-4102},
     journal={J. Reine Angew. Math.},
      volume={553},
       pages={1\ndash 16},
         url={http://dx.doi.org/10.1515/crll.2002.099},
      review={\MR{1944805}},
}

\bib{MR3264675}{article}{
      author={Zywina, David},
       title={The splitting of reductions of an abelian variety},
        date={2014},
        ISSN={1073-7928},
     journal={Int. Math. Res. Not. IMRN},
      number={18},
       pages={5042\ndash 5083},
      review={\MR{3264675}},
}

\bib{MR3474973}{article}{
      author={Zywina, David},
       title={Abelian varieties over large algebraic fields with infinite
  torsion},
        date={2016},
        ISSN={0021-2172},
     journal={Israel J. Math.},
      volume={211},
      number={1},
       pages={493\ndash 508},
         url={http://dx.doi.org/10.1007/s11856-015-1268-3},
      review={\MR{3474973}},
}

\end{biblist}
\end{bibdiv}

\end{document}